\newtheorem{Theorem}{Theorem}[section]
\newtheorem{Lemma}[Theorem]{Lemma}
\newtheorem{Proposition}[Theorem]{Proposition}
\newtheorem{Corollary}[Theorem]{Corollary}
\theoremstyle{definition}
\newtheorem{Definition}[Theorem]{Definition}
\theoremstyle{remark}
\newtheorem{Remark}[Theorem]{Remark}
\numberwithin{equation}{section}
\def\lmod{\!\operatorname{-mod}}
\def\ad{\operatorname{ad}}
\def\Ann{{\operatorname{Ann}}}
\def\coker{{\operatorname{coker}}}
\def\End{{\operatorname{End}}}
\def\gr{\operatorname{gr}}
\def\Hom{\operatorname{Hom}}
\def\Lie{\operatorname{Lie}}
\def\Mat{{\operatorname{Mat}}}
\def\op{{\operatorname{op}}}
\def\pr{{\operatorname{pr}}}
\def\Spec{\operatorname{Spec}}
\def\tw{{\operatorname{tw}}}
\def\sub{\subseteq}
\def\into{\hookrightarrow}
\def\onto{\twoheadrightarrow}
\def\isoto{\overset{\sim}{\longrightarrow}}
\def\hJ{{\widehat{J}}}
\def\hK{{\widehat{K}}}
\def\hU{{\widehat{U}}}
\def\hZ{{\widehat{Z}}}
\def\hTheta{{\widehat{\Theta}}}
\def\C{{\mathbb C}}
\def\R{{\mathbb R}}
\def\Z{{\mathbb Z}}
\def\Q{{\mathbb Q}}
\def\kk{{\mathbbm k}}
\def\a{\mathfrak a}
\def\g{\mathfrak{g}}
\def\h{\mathfrak h}
\def\l{\mathfrak{l}}
\def\m{\mathfrak m}
\def\n{\mathfrak n}
\def\v{\mathfrak{v}}
\def\sl{\mathfrak{sl}}
\def\bfp{\bar{\mathfrak p}}
\def\bfa{\bar{\mathfrak a}}
\def\cB{\mathcal B}
\def\cD{\mathcal D}
\def\cN{\mathcal N}
\def\cO{\mathcal O}
\def\cP{\mathcal P}
\def\GL{\mathrm{GL}}
\def\SL{\mathrm{SL}}
\def\ba{\text{\boldmath$a$}}
\def\bb{\text{\boldmath$b$}}
\def\bc{\text{\boldmath$c$}}
\def\bv{\text{\boldmath$v$}}
\title
[Modular finite $W$-algebras]
{\boldmath
Modular finite $W$-algebras}
\author{Simon M.~Goodwin and Lewis W.~Topley}
\address{School of Mathematics,
University of Birmingham,
Birmingham, B15 2TT,
UK}
\email{s.m.goodwin@bham.ac.uk}
\address{School of Mathematics, Statistics and Actuarial Science, University of Kent,
Canterbury, Kent CT2 7NF, UK}
\email{L.Topley@kent.ac.uk}
\thanks{2010 {\it Mathematics Subject Classification}: 17B10, 17B37, 17B50.}
\begin{document}

\begin{abstract}
Let $\kk$ be an algebraically closed field of characteristic $p > 0$ and let
$G$ be a connected reductive algebraic group over $\kk$.  Under some standard hypothesis
on $G$, we give a direct approach to the finite $W$-algebra $U(\g,e)$ associated to
a nilpotent element $e \in \g = \Lie G$.  We prove a PBW theorem and
deduce a number of consequences, then move on to define and study the $p$-centre of $U(\g,e)$, which allows us to
define reduced finite $W$-algebras $U_\eta(\g,e)$ and we verify that they
coincide with those previously appearing in the work of Premet.
Finally, we prove a modular version of Skryabin's equivalence of categories,
generalizing recent work of the second author.
\end{abstract}

\maketitle

\section{Introduction}

Finite $W$-algebras were introduced into the mathematical literature by Premet in \cite{PrST},
and have subsequently become an area of great interest.  Given a nilpotent element $e$ of the Lie algebra $\g_\C$ of
a complex connected reductive algebraic group $G_\C$, one can associate the finite $W$-algebra $U(\g_\C,e)$.  This is a certain
associative algebra, which can be viewed as a quantization of the Slodowy slice through the adjoint orbit of $e$.
These $W$-algebras and their representation theory have been extensively
studied and have found many important applications, a notable example being the theory of primitive
ideals of the universal enveloping algebra of $\g_\C$.  Many of the applications stem from Skryabin's equivalence,
which is proved in \cite{Sk}.  We refer to the survey article
by Losev \cite{Losev} for an overview.

Let $\kk$ be an algebraically closed field of characteristic $p > 0$.
We work under some mild assumptions on $G$ as given in \S\ref{ss:redgrps},
which, in particular, prescribe the existence of a non-degenerate symmetric $G$-invariant bilinear form $(\cdot\,,\cdot)$ on
$\g$, and we define $\chi : = (e,\cdot) \in \g^*$.
The modular versions of finite $W$-algebras were first studied by Premet in
\cite{PrST};
in the current paper we denote those algebras as $U_\chi(\g,e)$ and refer
to them as restricted finite $W$-algebras.

We recall that the centre of $U(\g)$ contains the $p$-centre $Z_p(\g)$; the definition
of $Z_p(\g)$ is given in \S\ref{ss:notation}.  Since $U(\g)$ is a finite module over $Z_p(\g)$, and the latter acts by scalars on irreducible representations
we see that every irreducible representation of $U(\g)$ is finite dimensional, and factors through
a reduced enveloping algebra $U_\eta(\g)$ for some $\eta \in \g^*$.
The reduced enveloping algebra
$U_\eta(\g)$ is obtained by quotienting out by the character of $Z_p(\g)$ determined by $\eta$
as recalled in \S\ref{ss:notation}.

Premet proved in \cite[Theorem~2.4]{PrST} that the reduced enveloping algebra $U_\chi(\g)$ is a matrix algebra
of degree $p^{d_\chi}$ over $U_\chi(\g,e)$, where $d_\chi$ is half the dimension
of the coadjoint $G$-orbit of $\chi$.  This leads to an equivalence of categories
between $U_\chi(\g)\lmod$ and $U_\chi(\g,e)\lmod$, which we refer to as Premet's equivalence.
In particular, this gives an alternative proof of the Kac--Weisfeiler conjecture that
the dimension of any (finite dimensional) $U_\chi(\g)$-module is divisible for $p^{d_\chi}$.
This was previously proved in \cite{PrKW}, and we also note that there
is a reduction, so that the assumption that $\chi$ is nilpotent can be removed.

The representation theory of reduced enveloping algebras $U_\eta(\g)$ attracted a great deal of research interest
from leading mathematicians including Friedlander--Parshall, Humphreys,
Jantzen, Kac and Premet in the late 20th century, we refer to the survey articles
\cite{JaLA} and \cite{Hu} for an overview.  As mentioned above, there is a reduction
to the case $\eta = \chi = (e,\cdot)$ for nilpotent $e \in \g$ is given in
\cite{FP}, see also \cite{KW}.
A notable advance in the field concerns a conjecture made by Lusztig \cite{Lu} predicting a deep connection between
the representation theory of $U_\chi(\g)$ and the geometry
of the corresponding Springer fibre $\cB^\chi$ over $\C$.
This conjecture was proved for $p$ sufficiently large by Bezrukavnikov--Mirkovic in 2010 \cite{BM}
building on the work by Bezrukavnikov--Mirkovic--Rumynin \cite{BMR} that
relates $U_\chi(\g)$ with $D$-modules on $\cB^\chi$.

The use of $W$-algebras to study the representation theory of the reduced enveloping algebras
has led to significant progress.  As already mentioned Premet's equivalence
gives an alternative proof of the Kac--Weisfeiler conjecture.
Another famous conjecture in the representation theory of $U_\chi(\g)$,
often known as Humphreys' conjecture, states that there is actually a $U_\chi(\g)$-module
of dimension $p^{d_\chi}$. Through Premet's equivalence this is equivalent to the existence of a $1$-dimensional
module for $U_\chi(\g,e)$.
As explained in the introduction to \cite{PrMF} Humphreys' conjecture is now a theorem for $p$ larger than an unspecified bound,
and Premet states that in his future work this bound will be lifted.  The proof is dependent on the
existence of $1$-dimensional modules for $U(\g_\C,e)$, the final cases of which are resolved in \cite{PrMF}; we refer to
\cite[Section~7]{Losev} or the introduction to \cite{PrMF} for more on the existence of $1$-dimensional $U(\g_\C,e)$-modules,
and to \cite{PT} for another recent development.

In this paper, which is inspired by the work of Premet, we give a new approach to the modular versions of $W$-algebras
through an algebra $U(\g,e)$ given in Definition~\ref{D:walgebra}.  Informally, this algebra
can be viewed as the characteristic $p$ version of $U(\g_\C,e)$.  Moreover,
we define the $p$-centre $Z_p(\g,e)$ of $U(\g,e)$ and then reduced finite $W$-algebras
$U_\eta(\g,e)$ are defined by quotienting out by a character of $Z_p(\g,e)$.
This is done so that $U_\eta(\g,e)$ bears the same relationship to $U(\g,e)$ as the reduced enveloping algebra $U_\eta(\g)$ bears to $U(\g)$.
For $p$ sufficiently large, the algebra $U(\g,e)$ has appeared in the work of Premet in \cite{PrPI} and
\cite{PrCQ}, where it is obtained from $U(\g_\C,e)$
through reduction modulo $p$.
In future work, we aim to clarify the relationship between $U(\g_\C,e)$ and $U(\g,e)$ further,
especially so that it is applicable for small $p$.  Many of our results
offer an alternative perspective on results of Premet from \cite{PrST}, \cite{PrPI}, \cite{PrCQ} and \cite{PrGR}, and
are applicable for small $p$.

We view the main results of this paper as foundations to further exploit this relationship
between the representation theory of $U(\g,e)$ and that of $U_\chi(\g)$.
In forthcoming work, we use the theory developed in this paper to classify
all $U_\chi(\g)$-modules of dimension $p^{d_\chi}$ when $G = \GL_n(\kk)$.  Further,
we show that all of these minimal dimensional modules can be parabolically induced in an appropriate
sense, which gives a modular analogue of M{\oe}glin's theorem, see \cite{Moe}.  The approach is similar
to that given by Brundan in \cite{BrM}, where an alternative proof of M{\oe}glin's theorem is presented.
We expect that the detailed results on the representation theory of $U(\g_\C,e)$, for $\g_\C = \mathfrak{gl}_n(\C)$ obtained by
Brundan--Kleshchev in \cite{BKrep} will lead to further applications to representation theory
of reduced enveloping algebras in type $\mathrm A$.

We now outline the contents of this paper, and highlight the main results.

In Section~\ref{S:prelims}, we set the stage by recalling the notation and elementary properties of
reductive groups and their Lie algebras, and we state the results from the
theory of nilpotent orbits that we require. In Section~\ref{S:goodgrading} we
explain how to adapt the theory of good gradings from \cite{EK} and \cite{BruG} to the modular
setting.

We move on to the main body of the paper in Section~\ref{S:walgebra}, in which $U(\g,e)$ is defined.
We fix an integral good grading $\g = \bigoplus_{j \in \Z} \g(j)$ of $\g$, and use this to define a nilpotent
subalgebra $\m = \l \oplus \bigoplus_{j < -1} \g(j)$, where $\l$ is chosen to be a Lagrangian subspace of $\g(-1)$ with
respect to the symplectic form on $\g(-1)$ defined in \S\ref{ss:compatiblespaces}.  Then
we let $M$ be the connected unipotent subgroup of $G$ with Lie algebra $\m$.  We define $\m_\chi := \{x - \chi(x) \mid x \in \m\} \sub U(\g)$
and $I = U(\g)\m_\chi$ to be the left ideal of $U(\g)$ generated by $\m_\chi$.  We note that the adjoint action
of $M$ on $U(\g)$ induces a well-defined adjoint action of $M$ on $U(\g)/I$.
In Definition~\ref{D:walgebra} the {\em finite $W$-algebra associated to $e$} is defined to be
$$
U(\g,e) := \{u + I \in U(\g)/I \mid g \cdot u + I = u + I \text{ for all } g \in M\}.
$$
This differs from the usual definition of $U(\g_\C,e)$ in that we take invariants with respect to a group
action rather than a Lie algebra.
However, we note that this is the correct characteristic $p$
analogue of $U(\g_\C,e)$ as justified by the PBW theorem for $U(\g,e)$ given in Theorem~\ref{T:PBWtheorem}.
This PBW theorem asserts that $U(\g,e)$ is a filtered deformation of the coordinate algebra of a so called
{\em good transverse slice} $e + \v$ to the nilpotent orbit of $e$ as was introduced by Spaltenstein
in \cite{Sp}, see \S\ref{ss:goodtransverse} for the definition of $e + \v$.
We note that in characteristic zero, we can take our good transverse slice to be the
Slodowy slice $e+\g_\C^f$, where $(e,h,f)$ is an $\sl_2$-triple in $\g_\C$, and $\g_\C^f$ denotes the
centralizer of $f$ in $\g_\C$.  However, when working over $\kk$, though
our assumptions on $\g$ do ensure existence of an $\sl_2$-triple, the Slodowy
slice can fail to be transversal to the orbit of $e$.
Our proof of Theorem~\ref{T:PBWtheorem} follows similar lines to the proof of
the corresponding theorem for $U(\g_\C,e)$ given in \cite{GG}, however we have
to work with cohomology for the algebraic group $M$, and make a number
of adaptations.  We note that as is the case in characteristic zero, we can think of $U(\g,e)$ being obtained
by quantum Hamiltonian reduction, see for example \cite{GG}.
We also consider the {\em extended finite $W$-algebra} $\hU(\g,e)$, which is defined
to be the $\m$-invariants in $U(\g)/I$ and we give a PBW theorem for $\hU(\g,e)$ in Theorem~\ref{T:extendedPBWtheorem}.

In Section~\ref{S:indep}, we prove that $U(\g,e)$ is independent up to isomorphism of the choice
of Lagrangian space $\l$ and the choice of good grading. This is achieved by adapting the proofs of these
results from \cite{GG} and \cite{BruG}, which is possible given the PBW theorem for $U(\g,e)$.
Thus we have that up to isomorphism $U(\g,e)$ only depends on the adjoint $G$-orbit of $e$.

We study the structure of $U(\g,e)$ and $\hU(\g,e)$ further in Section~\ref{S:applicationsofPBW}, where
we interpret the PBW theorems for $U(\g,e)$ and $\hU(\g,e)$ more explicitly.  Here we follow the approach in \cite[\S3.2]{BGK},
which in turn is based on results in \cite{PrST} and \cite{PrJI}.  In particular,
we obtain the form of PBW generators in Theorem~\ref{T:PBWbasisthm}: this will be of importance
when considering techniques of reduction modulo $p$.  Also in Corollary~\ref{C:UvsUhat} we clarify the
relationship between $U(\g,e)$ and $\hU(\g,e)$, which for $p$ sufficiently large is contained in \cite[Theorem~2.1]{PrCQ}.
This corollary also shows that $U(\g,e)$ can be obtained as a quotient of $\hU(\g,e)$.

The $p$-centre of $U(\g,e)$ is introduced in Section~\ref{S:pcentre}.  This is defined similarly to $U(\g,e)$ by letting
$I_p = I \cap Z_p(\g)$ and setting
$$
Z_p(\g,e) := \{ u + I_p \in Z_p(\g)/I_p \mid g \cdot u + I_p = u+I_p \text{ for all } g \in M\}.
$$
We also note that $Z_p(\g)/I_p$ can be viewed as the $p$-centre of $\hU(\g,e)$.
Our main result about the $p$-centres of $U(\g,e)$ and $\hU(\g,e)$ is Theorem~\ref{T:extendedpcentre}, where we show that
both $U(\g,e)$ and $\hU(\g,e)$ are free of rank $p^{\dim \g^e}$ over their $p$-centres.
Using the PBW theorem for $U(\g,e)$ we can show that the maximal ideals of $Z_p(\g,e)$
are parameterized by the affine subspace $\chi + \check\v$ of $\g^*$, which is dual to $e+\v$ via $(\cdot\,,\cdot)$.
We define the reduced finite $W$-algebras associated to $\eta \in \chi + \check\v$ to be
$$
U_\eta(\g,e) := U(\g,e)/K_\eta U(\g,e),
$$
where $K_\eta$ is the maximal ideal of $Z_p(\g,e)$ corresponding to $\eta$;
the restricted $W$-algebra $U_\chi(\g,e)$ is obtained in the case $\eta = \chi$.
An important result
for us is Proposition~\ref{P:reducesame}, which says that $U_\eta(\g,e)$ coincides (up to isomorphism)
with the reduced finite $W$-algebras introduced by Premet in \cite{PrST}.

In the final section of this paper, we state and prove a modular analogue of Skryabin's equivalence
from \cite{Sk}.  As in the characteristic zero case, this gives an equivalence of categories between
$U(\g,e)\lmod$ and a certain category of modules for $U(\g)$.  We expect that this result will be of importance
in clarifying the relationship between the representation theory of $U(\g,e)$ and that of $U(\g)$.
Our approach develops that of the second author \cite{To} where
a similar result was proved for $p$ sufficiently large.  As mentioned in Remark~\ref{R:genequiv},
this leads to an alternative approach to Premet's equivalence.

\smallskip

\noindent {\bf Ackowledgements:} Both authors would like to thank the University of Padova and the Erwin
Schr{\"o}dinger Institute, Vienna, where parts of this work were carried out. The second author would like to offer
special thanks to Giovanna Carnovale and Francesco Esposito for useful conversations over the past two years;
furthermore the research leading to these results received funding from the European Commission, Seventh
Framework Programme, Grant Agreement 600376, as well as grants CPDA125818/12, 60A01-4222/15 and DOR1691049/16
from the University of Padova.

We thank the referees for a number of helpful comments, which have improved the exposition of the paper.

\section{Preliminaries} \label{S:prelims}

\subsection{Notation for linear algebraic groups and Lie algebras} \label{ss:notation}
Let $p \in \Z_{>0}$ be a prime number and
let $\kk$ be an algebraically closed field of characteristic $p$.

Throughout this paper, we write $V^{(1)}$ for the {\em Frobenius twist} of a vector space $V$ over $\kk$,
so $V^{(1)}$ is equal to $V$ as an abelian group, but the scalar multiplication is determined by saying
$a \in \kk$ acts on $V^{(1)}$ as $a^{\frac{1}{p}}$ acts on $V$.

Let $G$ be a linear algebraic group over
$\kk$, and write $\g = \Lie G$ for the Lie algebra of $G$.
We write $G^\circ$ for the identity component of $G$, the derived subgroup of $G$ is denoted
$\cD G$ and we write $Z(G)$ for the centre of $G$.

Let $g \in G$ and $x \in \g$.  We write $g \cdot x$ for the image of $x$ under $g$ in the adjoint action,
$G^x$ for the centralizer of $x$ in $G$ and $\g^x$ for the centralizer of $x$ in $\g$.  Also we
define $G^{[x]} := \{g \in G \mid g \cdot x \in \kk x\}$ and call it the {\em normalizer of $x$ in $G$}.
We use similar notation when considering the coadjoint action of $G$ on $\g^*$.

Since $\g$ is the Lie algebra of an algebraic group, it is a restricted Lie algebra in a natural way, and we write $x \mapsto x^{[p]}$ for the
$p$th power map. The elements $x^p - x^{[p]}$ for $x\in \g$ are central in the enveloping algebra $U(\g)$
and the algebra $Z_p(\g)$ that they generate is known as the {\em $p$-centre} of $U(\g)$. It is an immediate consequence of the PBW theorem that
$U(\g)$ is a free $Z_p(\g)$-module of rank $p^{\dim \g}$.
The adjoint action of $G$ on $\g$ extends to an action on $U(\g)$ by automorphisms, and this action preserves
$Z_p(\g)$. Similarly $S(\g)$ admits a adjoint $G$-action by automorphisms. Furthermore, there is a $G$-equivariant
$k$-algebra isomorphism
\begin{equation}\label{e:ximap}
 \xi : S(\g)^{(1)} \isoto Z_p(\g)
\end{equation}
determined by sending $x \mapsto x^p - x^{[p]}$ for $x\in \g$.

Let $\eta \in \g^*$.  We define $J_\eta$ to be the ideal of $Z_p(\g)$  generated by $\{x^p - x^{[p]} - \eta(x)^p \mid x \in \g\}$
and the {\em reduced enveloping algebra} $U_\eta(\g) := U(\g)/J_\eta U(\g)$.  Since $U(\g)$ is a free $Z_p(\g)$-module of rank $p^{\dim \g}$ we observe that
$\dim U_\eta(\g) = p^{\dim \g}$.  In the case $\eta = 0$, we have that $U_0(\g)$ is the {\em restricted enveloping algebra of $\g$}.

\subsection{Reductive groups} \label{ss:redgrps}
Throughout the rest of this article we let $G$ be a connected reductive algebraic group over $\kk$.
Also we assume that $G$ satisfies the standard hypotheses from \cite[\S6.3]{JaLA}:
\begin{itemize}
\item[(H1)] $\cD G$ is simply connected;
\item[(H2)] $p$ is a good prime for $G$; and
\item[(H3)] there is a non-degenerate $G$-invariant symmetric bilinear form on $\g$.
\end{itemize}
We refer the reader to \cite[\S6.4]{JaLA} for a discussion of what these hypotheses amount to,
and remark that they are stable under taking Levi subgroups.  Here we just mention that
for $G$ simple these hypotheses hold for $G$ under the following conditions: for type $\mathrm A_r$ we require $p$ does not divide
$r+1$; for types $\mathrm B_r$, $\mathrm C_r$ or $\mathrm D_r$ we require $p \neq 2$; for types $\mathrm G_2$, $\mathrm F_4$,
$\mathrm E_6$ or $\mathrm E_7$ we require $p \neq 2,3$;
and for type $\mathrm E_8$ we require $p \neq 2,3,5$.  Further, we note that although (H3)
does not hold for $\SL_n(\kk)$ if $p \mid n$, it does hold for $\GL_n(\kk)$.

We fix a form as in (H3) and denote it by $(\cdot\,,\cdot) : \g \times \g \to \kk$.
The existence of this form has two important consequences:
\begin{itemize}
\item[(i)] there exists an isomorphism of $G$-modules $\g \isoto \g^*$;
\item[(ii)] $\g^x = \Lie(G^x)$ for all $x \in \g$, see \cite[\S2.5]{JaNO}.
\end{itemize}
We use (i) to identify $\g \cong \g^*$ when it is convenient to do so.
We remark that (ii) is a nontrivial consequence of the existence of $(\cdot\,,\cdot)$
and can be expressed by saying that the scheme theoretic centralizer of $x \in \g$ is smooth.
Another piece of notation we introduce is that we write $\a^\perp := \{x \in \g \mid (x,y) = 0 \text{ for all } y \in \a\}$
for a subspace $\a$ of $\g$.

Let $T$ be a maximal torus in $G$
and let $\Phi$ be the root system of $G$ with respect to $T$.
Given $\alpha \in \Phi$,
we write $\g_\alpha$ for the root subspace of $\alpha$ in $\g$ and we fix a parametrization $u_\alpha : \kk \to G$
of the root subgroup $U_\alpha$ of $\alpha$,
Then $e_\alpha = du_\alpha(1)$ is a generator of $\g_\alpha$.

This can all be chosen so that the adjoint action of the root subgroups on the root spaces is
given by the following formulae.
For $\alpha, \beta \in \Phi$ with $\beta \neq \pm \alpha$,
we have
\begin{align} \label{e:adjoint}
u_\alpha(t) \cdot e_\alpha &= e_\alpha  \\
u_\alpha(t) \cdot e_{-\alpha} &= e_{-\alpha} +t h_\alpha - t^2 e_\alpha  \nonumber \\
u_\alpha(t) \cdot e_\beta &= e_\beta + \sum_{\substack{i > 0 : \\ i\alpha+\beta \in \Phi^+}} m_{\alpha,\beta,i} t^i e_{i\alpha+\beta}, \nonumber
\end{align}
where $h_\alpha = [e_\alpha,e_{-\alpha}]$ and the $m_{\alpha,\beta,i} \in \Z$ are certain structure constants
satisfying $[e_\alpha,e_\beta] = m_{\alpha,\beta,1} e_{\beta + \alpha}$;
also the structure constants $m_{\alpha,\beta,i}$ are always $\pm 1$, $\pm 2$ or $\pm 3$, and
$\pm 2$ occurs only if $G$ has a simple component of type $\mathrm B_r$, $\mathrm C_r$, $\mathrm F_4$
or $\mathrm G_2$, and $\pm 3$ occurs only if $G$ has a simple component of type $\mathrm G_2$.
These formulae can be obtained from Steinberg's construction of Chevalley groups in \cite{StL},
see also \cite[Chapter~4]{CaS}.

We write $X^*(T)$ for the group of characters of $T$ and $X_*(T)$ for the group of cocharacters of $T$; both of which we
write additively.  The perfect pairing between $X^*(T)$ and $X_*(T)$ is denoted by $\langle \cdot\,, \cdot \rangle :
X^*(T) \times X_*(T) \to \Z$.  For a subtorus $S$ of $T$, we write $X^*(S)$ for the corresponding quotient of $X^*(T)$ and $X_*(S)$ for the corresponding
subgroup of $X_*(T)$.

\begin{Remark} \label{R:char0a}
For the main part of this paper, we work entirely in characteristic $p$ (and always
under the hypothesis given in \S\ref{ss:redgrps}).   So we just consider $G$ as a reductive algebraic group over $\kk$,
and identify it with its group of points over $\kk$. However,
it is convenient for us to relate our work with
the characteristic zero situation.  In this paper this is of use when considering
good gradings in Section~\ref{S:goodgrading}, and
for the argument used in the proof of Theorem~\ref{T:extendedPBWtheorem}.  Also it will be of importance when considering
reduction modulo $p$ in future work. To enable us to do this, it
is convenient for us to consider a connected split reductive group scheme $G_\Z$ over $\Z$ such that $G = G_\Z(\kk)$.
We refer the reader to \cite[Part~2,~Section 1]{JaAG} for the notation and terminology used here.

In particular, we note that it is possible to choose $T = T_\Z(\kk)$, where
$T_\Z$ is a split maximal torus of $G_\Z$.  Then the homomorphisms $u_\alpha$ can be chosen
to be defined over $\Z$; as such they are uniquely determined up to sign, and the Frobenius
morphism $F : G \to G$ is given on $U_\alpha$ by $F(u_\alpha(t)) = u_\alpha(t^p)$.
Further, we can view $e_\alpha$ as an element of $\g_\Z = \Lie G_\Z$.
We refer to \cite[Part~2,~\S1.11]{JaAG} for more details of the constructions in this paragraph.
\end{Remark}

\subsection{Nilpotent orbits and restricted root systems} \label{ss:nilporbits}

Let $e \in \g$ be a nilpotent element.  We say that $e$ is {\em compatible with $T$} if
$(T^{[e]})^\circ$ is a maximal torus of $G^{[e]}$.  By choosing $T$ to contain a maximal torus of $G^{[e]}$,
we can assume that $e$ is compatible with $T$.
We write $T_e = (T^e \cap \cD G)^\circ$ and $T_{[e]} = (T^{[e]} \cap \cD G)^\circ$.

Let $G_0$ be the centralizer of $T_e$ in $G$ and $\g_0 = \Lie G_0$.
Then $G_0$ is a Levi subgroup of $G$ and we also note
that $\g_0$ is the set of $T_e$-fixed points in $\g$.  Moreover, $e$ is distinguished nilpotent in
$\g_0$; we recall that this means any torus in $G_0^e$ is central, and this implies
that $Z(G_0)^\circ = (T^e)^\circ$ is the unique maximal torus of $G_0^e$.

We recall that a cocharacter $\lambda : \kk^\times \to G$ is {\em associated to $e$} if
$\lambda(t) \cdot e = t^2 e$ and $\lambda(\kk^\times)$ is contained in the derived subgroup of
a Levi subgroup in which $e$ is distinguished, see \cite[Definition~5.3]{JaNO}.
By \cite[Lemma~5.3]{JaNO} and our choices, there exists a cocharacter
$\lambda \in X_*(T_{[e]})$ associated to $e$, and all associated cocharacters are conjugate under
$(G^e)^\circ$.  Thus we observe that $\lambda$ is the only cocharacter
in $X_*(T_{[e]})$ associated to $e$:
to see this we recall that $G^{e,\lambda} := \{x \in G^e \mid \lambda(t) \cdot x = x \text{ for all } t \in \kk^\times\}$
is a Levi factor of $G^e$, see \cite[Proposition~5.10]{JaNO}; and note that
if $\lambda$ is conjugate to $\mu \in X_*(T_{[e]})$ via $(G^e)^\circ$, then
it must actually be conjugate via $G^{e,\lambda}$, so that $\mu = \lambda$.

The grading
\begin{equation} \label{e:dynkingrading}
\g = \bigoplus_{j \in \Z} \g(j;\lambda)
\end{equation}
where $\g(j;\lambda) := \{x \in \g \mid \lambda(t) \cdot x = t^j x \text{ for all } t \in \kk^\times\}$ is
called the {\em Dynkin grading of $\g$}.
As explained in \cite[\S5.5]{JaNO} the Dynkin grading is the correct analogue of the grading in characteristic 0
given by the $\ad h$-eigenspace decomposition
coming from an $\mathfrak{sl}_2$-triple $(e,h,f)$.

We recall that $\g^e$ is contained in the non-negative part of the Dynkin grading, see
\cite[Proposition~5.8]{JaNO}, so we have
\begin{equation} \label{e:g^e}
\g^e = \bigoplus_{j \in \Z_{\ge 0}} \g^e(j;\lambda).
\end{equation}

We denote by $\Phi^e \sub X^*(T_e)$ the \emph{restricted root system}: the set of non-zero restrictions $\alpha|_{T_e}$ where
$\alpha \in \Phi$.  This is equivalent to the restricted root system defined in \cite[Section~2]{BruG}, so all of the theory
developed there applies.

For $\alpha \in X^*(T_e)$, we write $\g_\alpha := \{x \in \g \mid t \cdot x = \alpha(t) x \text{ for all } t \in T_e\}$
for the $T_e$-weight space corresponding to $\alpha$.  So $\g_\alpha$ is zero unless $\alpha \in \Phi^e \cup \{0\}$,
and we have
$$
\g = \g_0 \oplus \bigoplus_{\alpha \in \Phi^e} \g_\alpha.
$$
Incorporating the Dynkin grading, we obtain the decompositions
$$
\g = \bigoplus_{j \in \Z} \g_0(j;\lambda) \oplus \bigoplus_{\substack{\alpha \in \Phi^e \\ j \in \Z}} \g_\alpha(j;\lambda),
$$
where $\g_\alpha(j;\lambda) : = \g_\alpha \cap \g(j;\lambda)$, and
\begin{equation} \label{e:g^eroot}
\g^e = \bigoplus_{j \in \Z_{\ge 0}} \g_0^e(j;\lambda) \oplus \bigoplus_{\substack{\alpha \in \Phi^e \\ j \in \Z_{\ge 0}}} \g^e_\alpha(j;\lambda),
\end{equation}
where $\g^e_\alpha(j;\lambda):= \g_\alpha(j;\lambda) \cap \g^e$.
Since $\g^e$ is contained in the non-negative part of the Dynkin grading we deduce
that
\begin{equation} \label{e:dimg^e}
\dim \g^e_\alpha(j;\lambda) = \dim \g_\alpha(j;\lambda) - \dim \g_\alpha(j+2;\lambda)
\end{equation}
for $\alpha \in \Phi^e \cup \{0\}$ and $j \in \Z_{\ge 0}$.

\begin{Remark} \label{R:char0}
We continue the discussion from Remark~\ref{R:char0a}, and use the notation from there.
In particular we recall that $\g_\Z$ is a $\Z$-form of $\g$, so that $\g \cong \g_\Z \otimes \kk$.
We fix a $\Z$-basis $\tilde \Pi$
of $X_*(T)$, and for $\beta^\vee \in \tilde \Pi$ we let $h_\beta = d\beta^\vee(1)$.
Then $\{e_\alpha \mid \alpha \in \Phi\} \cup \{h_\beta \mid \beta^\vee \in \tilde \Pi\}$
is a $\Z$-basis of $\g_\Z$.
We let $G_\C = G_\Z(\C)$ and $\g_\C = \Lie G_\C \cong \g_\Z \otimes \C$.
By an abuse of notation, we view $e_\alpha$ and $h_\beta$ both as elements
of $\g_\C$ and $\g$.

We note that in all of the following discussion, there is reduction to
the case where $\cD G$ is simple using standard arguments, so we may as well
assume that this is the case.

We write $\cN \sub \g$ for the variety of nilpotent elements in $\g$ and define $\cN_\C \sub \g_\C$
similarly.   By the Bala--Carter theory, see for example \cite[Chapter~4]{JaNO}, it is known that the nilpotent
orbits of $G$ in $\cN$ are
in canonical bijection with the nilpotent orbits of $G_\C$ in $\cN_\C$.
Moreover, for each nilpotent orbit $\cO_\C$ in $\cN_\C$, we can choose a representative
$e = \sum_{\alpha \in A} e_\alpha \in \g_\Z$, where $A \sub \Phi$;
moreover, this gives a representative of the corresponding orbit in $\g$.
For classical groups, this can be seen using the theory of Dynkin pyramids from \cite[Sections~4--7]{EK},
and for exceptional groups this can be observed from the tables in \cite[Section~11]{LT}.  By a minor
abuse of notation we view $e$ as both an element of $\g_\C$ and of $\g$.

It can be checked that this choice of $e$ is compatible with $T$, we recall that this means that $(T^{[e]})^\circ$
is a maximal torus of $G^{[e]}$; and also that $e \in \g_\C$ is compatible with $T_\C$.
Then it is a straightforward calculation in the root datum to find a $\Z$-basis
for $X_*(T^e)$, which is valid both over $\kk$ and over $\C$.
Indeed for the exceptional groups this is implicit in \cite[Section~11]{LT},
and for classical groups we can observe that $X_*(T^e)$ is determined by the Dynkin pyramid for $e$.
Moreover, the associated cocharacter $\lambda \in X_*(T^{[e]})$ is given by an expression
valid both for $\kk$ and for $\C$.

An outcome of all of this is that the dimensions of the spaces
$\g_\alpha(j;\lambda)$ for $\alpha \in \Phi^e \cup \{0\}$ and $j \in \Z$ in
the decomposition \eqref{e:g^eroot} are the same in characteristic $p$
as in characteristic 0.  Therefore, using \eqref{e:dimg^e}, we
see that the dimensions of the spaces of $\g^e_\alpha(j;\lambda)$ is the same in characteristic $p$
as in characteristic 0.  In particular, this implies that
\begin{equation} \label{e:dimgeweight}
\dim \g^e_{- \alpha}(j;\lambda) = \dim \g^e_\alpha(j;\lambda),
\end{equation}
as this is the case in characteristic 0, where it can be observed as a consequence
of the representation theory of $\sl_2$ as in the proof of \cite[Lemma~13]{BruG}.
It would be interesting to know if these equalities in dimensions in \eqref{e:dimgeweight} can
be proved in a more direct way.
\end{Remark}

\section{Good gradings} \label{S:goodgrading}
The theory of good gradings for nilpotent elements in $\g_\C$
has been well studied, see \cite{EK} and \cite{BruG},
and they can be used in the definition of $W$-algebras.
In this section we explain how to adapt this theory to the modular case, and show that
the parametrization of good gradings is essentially the same over $\kk$ as it is over $\C$.

For each $\gamma \in X_*(T_{[e]}) \setminus X_*(T_e)$,
there exists $d= \deg_\gamma(e)\in \Z \setminus \{0\}$ such that  $\gamma(t) \cdot e = t^d e$ for all $t \in \kk^\times$.
For $j \in \frac{2}{d}\Z$, we set
$$
\g(j;\gamma,e) := \{x \in \g \mid \gamma(t) x = t^{\frac{jd}{2}} x \text{ for all } t \in \kk^\times\}
$$
and observe that
$$
\g = \bigoplus_{j \in \frac{2}{d}\Z} \g(j;\gamma,e)
$$
is a grading of $\g$; we denote this grading by $\Gamma_\gamma$.  We remark that the normalization in the grading is chosen so that
$e \in \g(2;\gamma,e)$.  We say that the grading $\Gamma_\gamma$ is {\em integral} if $\g(j;\gamma,e) = 0$ for all $j \not\in \Z$, and we say that this grading
is {\em even} if $\g(j;\gamma,e) = 0$ for all $j \not\in 2\Z$.
Let $\sim$ be the equivalence relation on $X_*(T_{[e]})$ generated by $\gamma \sim m \gamma$ for $m \in \Z_{\neq 0}$
and denote the equivalence class of $\gamma$ by $[\gamma]$.
Since $T_{[e]} \sub \cD G$ and $(\cD G \cap Z(G))^\circ = 1$, we observe that $\Gamma_\gamma = \Gamma_{\gamma'}$ if and only if
$[\gamma] = [\gamma']$, for $\gamma, \gamma' \in X_*(T_{[e]}) \setminus X_*(T_e)$.

Now standard arguments can be used to prove that
for $i,j \in \frac{2}{d}\Z$, we have
\begin{itemize}
\item
if $x \in \g(j;\gamma,e)$, then $x^{[p]} \in \g(pj; \gamma,e)$; and
\item
$(\cdot\,, \cdot)$ restricts to a non-degenerate pairing $\g(j;\gamma,e) \times \g(-j;\gamma,e) \to \kk$
and $\g(i;\gamma,e)$ is orthogonal to $\g(j;\gamma,e)$ for $i \neq -j$.
\end{itemize}

We move on to the definition of good gradings used in this paper.

\begin{Definition}
Let $\gamma \in X_*(T_{[e]}) \setminus X_*(T_e)$.  We say that $\gamma$ is a \emph{good cocharacter for $e$} if
the map
$$
\ad e : \g(j;\gamma,e) \to \g(j+2;\gamma,e)
$$
is injective for all $j \leq -1$.
For a good cocharacter $\gamma$, we refer to the grading $\Gamma_\gamma$ as a {\em good grading for $e$}.

By convention the trivial cocharacter $\gamma$ is the only good cocharacter for $e =0$, and
$\deg_\gamma(0) = 2$.  In this case the grading $\Gamma_\gamma$ is concentrated in degree $0$.
\end{Definition}

It follows from \eqref{e:g^e} that the associated cocharacter $\lambda$ is a good cocharacter for
$e$, and that the grading $\Gamma_\lambda$ is integral.

We note that the proof of \cite[Theorem~1.3]{EK} can be adapted to show that
$\gamma$ is good for $e$ if and only if
$\ad e : \g(j;\gamma,e) \to \g(j+2;\gamma,e)$ is surjective for $j \ge -1$.

It is worth mentioning that we can consider more general good gradings for $e$, which are defined
by cocharacters with image in $G^{[e]}$.  However, standard arguments can be used to prove that any such good grading
is conjugate under $(G^e)^\circ$ to one defined by a cocharacter in $X_*(T^{[e]})$.  Then using
the fact that $X_*(T \cap \cD G) \oplus X_*(Z(G))$ has finite index in $X_*(T)$, we deduce that the grading can be
defined by a cocharacter in $X_*(T_{[e]})$.

We move on to describe the parametrization of equivalence classes of good cocharacters for $e$. By the above remarks
this is equivalent to the parametrization of the good gradings. Observe that there is a natural bijection
$$
\{ [\gamma] \mid \gamma \in X_*(T_{[e]}) \setminus X_*(T_e)\}\longrightarrow  E_e^\Q := \Q \otimes_\Z X_*(T_e)
$$
defined by $[\gamma] \mapsto 1\otimes \lambda - \frac{2}{d}\otimes \gamma$, where $d = \deg_\gamma(e)$; the inverse is given by the rule
$\frac{b}{c} \otimes \delta\mapsto [c\lambda - b\delta]$. Since every element of $E_e^\Q$ is of the form
$\frac{1}{c}\otimes \delta$ for some $c\in \Z$ and $\delta \in X_*(T_e)$ our goal is now to explain precisely when
$[c\lambda - \delta]$ gives a good grading.

We consider the decomposition of $\g^e$ given by \eqref{e:g^eroot}.
For $x \in \g^e_\alpha(j; \lambda)$, we see that the $c\lambda-\delta$ weight of $x$ is $cj-\langle \alpha,\delta \rangle$.
Thus for $c\lambda-\delta$ to be a good cocharacter we require
$cj-\langle \alpha,\delta \rangle > -c$ for all $\alpha \in \Phi^e$ and
all $j$ such that $\g^e_\alpha(j; \lambda) \neq \{0\}$.
If we set $m(\alpha) = 1 + \min\{j \ge 0 \mid \g^e_\alpha(j;\lambda) \neq \{0\}\}$ then it follows that $c\lambda-\delta$ is good
if and only if $c m(\alpha) > \langle \alpha, \delta \rangle$ for all $\alpha \in \Phi^e$. As explained at the end of
Remark~\ref{R:char0}, we have
$\dim \g^e_\alpha(j; \lambda) = \dim \g^e_{-\alpha}(j; \lambda)$ for $\alpha \in \Phi^e$ and $j \in \Z_{\ge 0}$. Thus the inequalities
corresponding to $\pm \alpha \in \Phi^e$ can be combined into the single inequality
$c m(\alpha) > |\langle \alpha, \delta \rangle|$.

Observe that the perfect pairing $X^*(T_e) \times X_*(T_e)  \rightarrow \Z$ induces a $\Z$-bilinear form $X^*(T_e) \times E_e^\Q \rightarrow \Q$.
We may now combine our previous remarks, to deduce that good gradings for $e$ are parameterized by a polytopal region
of $E_e^\Q$. More precisely:
\begin{enumerate}
\item[(i)] the gradings given by elements of $X_*(T_{[e]}) \setminus X_*(T_e)$ are parameterized by $E_e^\Q$;
\item[(ii)] the grading associated to $\frac{1}{c} \otimes \delta \in E_e^\Q$ is a good grading for $e$ if and only if
\begin{equation} \label{e:polytope}
|\langle \alpha,\tfrac{1}{c}\otimes \delta \rangle | < m(\alpha)
\end{equation}
for all $\alpha \in \Phi^e$.
\end{enumerate}
The polytope cut out by these inequalities is denoted $\cP_e^\Q$ and referred to as the {\em good grading polytope}.
Moreover, as a consequence of the discussion in Remark~\ref{R:char0},
we have that $m(\alpha)$ is the same as the corresponding value in characteristic 0.
Thus the good grading polytope $\cP_e^\Q$ is the intersection of $E_e^\Q$ with the polytope in $X_*(T_e) \otimes_\Z \R$ studied in \cite[Theorem~20]{BruG}.

\begin{Remark} \label{R:distinguished}
We briefly consider the case where $e$ is distinguished nilpotent in $\g$.  In this case we have that
$X_*(T_e) = \{0\}$, which implies that the only good grading for $\g$ is the Dynkin grading.
Moreover, the Dynkin grading is even, see \cite[\S5.7.6]{CaF}.

Moving back to the general case, we note that, by construction, $e \in \g_0$ is distinguished nilpotent.
Also any good grading of $\g$ for $e$ restricts to a good grading of $\g_0$ for $e$.  Therefore, we deduce
that $\g_0(j;\gamma,e) = 0$ for $j \not\in 2\Z$ for any good grading $\Gamma_\gamma$.
\end{Remark}

\section{\texorpdfstring{Modular $W$-algebras}{Modular W-algebras}} \label{S:walgebra}

In this section we move on to present the definitions of the $W$-algebras, which are the main
object of study in this paper. Throughout this section we fix a nilpotent element
$e \in \g$ that is compatible with $T$.  Let $\chi \in \g^*$
be defined by $\chi(x) = (e,x)$, where we recall that $(\cdot\,,\cdot)$ is an invariant non-degenerate symmetric
bilinear form on $\g$.  Then we have $\g^e = \g^\chi$, and let $d_\chi := \frac{1}{2} \dim G \cdot \chi = \frac{1}{2} \dim G \cdot e$.

We fix a good cocharacter $\gamma \in X_*(T_{[e]}) \setminus X_*(T_e)$ for $e$, where
we recall that $T_e = (T^e \cap \cD G)^\circ$ and $T_{[e]} = (T^{[e]} \cap \cD G)^\circ$.
Let $d= \deg_\gamma(e)\in \Z \setminus \{0\}$.  We abbreviate notation and write
$\g(j)$ as shorthand for $\g(j;\gamma,e)$, for $j \in \frac{2}{d}\Z$, and
we write
$$
\Gamma_\gamma : \g = \bigoplus_{j \in \frac{2}{d}\Z} \g(j)
$$
for the corresponding good grading.
By incorporating the restricted root
space decomposition, we obtain
\begin{equation} \label{e:goodrestricted}
\g = \bigoplus_{j \in \frac{2}{d}\Z} \g_0(j) \oplus \bigoplus_{\substack{\alpha \in \Phi^e \\ j \in \frac{2}{d}\Z}} \g_\alpha(j).
\end{equation}

\subsection{Compatible Lagrangian subspaces} \label{ss:compatiblespaces}

The skew-symmetric
bilinear form $\omega = \langle \cdot \,, \cdot \rangle$
is defined on $\g(-1)$  by
$$
\langle x, y \rangle := ([y,x],e) = \chi([y,x]);
$$
we note that $\omega$ is non-degenerate, because $\ad e : \g(-1) \to \g(1)$ is bijective and
$\g(-1)$ is dual to $\g(1)$ via $(\cdot \,, \cdot )$.

For any subspace $\l \sub \g(\-1)$ we write
$\l^{\perp_\omega} \sub \g(-1)$ for the annihilator of $\l$ with respect to $\omega$. We say that an
isotropic subspace $\l \sub \g(-1)$ is {\em compatible with $T$ and $\chi$} if $\l$ and $\l^{\perp_\omega}$
are $T$-stable under the adjoint action and $\chi(\l^{[p]}) = 0$; we note that
this second condition automatically holds if
$p \neq 2$.

An example of a compatible isotropic subspace of $\g(-1)$ is $\l = 0$.  We can also
obtain a compatible isotropic subspace of $\g(-1)$, which is in fact Lagrangian, by
choosing a system of positive roots $\Phi^e_+$ in $\Phi^e$ as defined in \cite[Section~2]{BruG}
and then setting $\l = \bigoplus_{\alpha \in \Phi^e_+} \g_\alpha(-1)$; here we require
Remark~\ref{R:distinguished} to see that $\l$ is Lagrangian.

We fix a compatible isotropic subspace $\l \sub \g(-1)$ for the rest of this section.

\subsection{\texorpdfstring{The Lie algebras $\m$ and $\n$ and the
group $N$}{The Lie algebras m and n and the group N}} \label{ss:algebraandgroup}

We define the
nilpotent subalgebras
$$
\m := \l + \bigoplus_{j < -1} \g(j) \quad \text{ and} \quad
\n := \l^{\perp_\omega} + \bigoplus_{j < -1} \g(j)
$$
of $\g$.
We note that $\m$ and $\n$ are
restricted subalgebras of $\g$, and they are stable under the adjoint
action of $T$.
There exists a subset $\Phi(\n) \sub \Phi$ with
$$
\n = \bigoplus_{\alpha \in \Phi(\n)} \g_\alpha.
$$
The one parameter subgroups $u_\alpha : \kk \to G$ associated to the roots $\alpha \in \Phi(\n)$ generate
a closed connected group $N \sub G$ defined over $\mathbb F_p$, stable under conjugation by $T$ and such that $\n = \Lie N$.

We have that $\chi$
defines a $1$-dimensional representation of $U(\m)$ and of $U_\chi(\m)$;
we denote the corresponding module by $\kk_\chi$.
The subspace $\m_\chi := \{x -\chi(x) \mid x\in \m\} \sub U(\g)$ is actually
a Lie subalgebra of $U(\g)$ isomorphic to $\m$.  Moreover, $\m_\chi$ generates the
kernel of $\chi : U_\chi(\m) \to \kk$ and this is the unique maximal ideal of
$U_\chi(\m) $, so that $U_\chi(\m)$ is a local algebra.

We observe that $\dim \g - \dim \m - \dim \n = \sum_{-1 \le j < 1} \dim \g(j)$.
Moreover, since
$$
\ad e :  \bigoplus_{j \ge -1} \g(j) \to \bigoplus_{j \ge 1} \g(j)
$$
is surjective and $\g^e \sub \bigoplus_{j \ge -1} \g(j)$
we have $\dim \g^e = \sum_{-1 \le j < 1} \dim \g(j)$.
Hence,
\begin{equation} \label{e:dimm+dimn}
\dim \m + \dim \n = \dim \g - \dim \g^e = 2d_\chi,
\end{equation}
where we recall that $d_\chi = \frac{1}{2} \dim G \cdot e$.

\subsection{\texorpdfstring{The definition of the finite $W$-algebra}
{The definition of the finite W-algebra}}
\label{ss:gelfandwhittaker}
We define the {\em generalized Gelfand--Graev module} to be
$$
Q := U(\g) \otimes_{U(\m)} \kk_\chi \cong U(\g)/U(\g)\m_\chi,
$$
where we recall that $\m_\chi = \{x -\chi(x) \mid  x\in \m\}$.  We
abbreviate notation and write $I$ for $U(\g)\m_\chi$.

\begin{Lemma} \label{L:Npreservesm}
Both $\m$ and $\m_\chi$ are stable under the adjoint actions of $\n$ and $N$.
\end{Lemma}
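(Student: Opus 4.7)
The plan is to leverage the $T$-stable root space decompositions of $\m$ and $\n$ and reduce the lemma to a degree count combined with the isotropy of $\l$. First observe that $\m \sub \n$, since $\l \sub \l^{\perp_\omega}$ (as $\l$ is isotropic). The $T$-compatibility of $\l$ and $\l^{\perp_\omega}$ together with the $T$-stability of each graded piece $\g(j)$ let us write $\m = \bigoplus_{\beta \in \Phi(\m)} \g_\beta$ and $\n = \bigoplus_{\alpha \in \Phi(\n)} \g_\alpha$ for subsets $\Phi(\m), \Phi(\n) \sub \Phi$ consisting of roots of $\gamma$-degree at most $-1$.

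For the Lie algebra statements, I would prove (a) $[\n, \m] \sub \m$ and (b) $\chi([\n, \m]) = 0$; together these yield $\ad \n$-stability of $\m_\chi$, since $[x, y - \chi(y)] = [x, y]$ lies in $\m_\chi$ as soon as $[x,y] \in \m$ and $\chi([x, y]) = 0$. Assertion (a) is the immediate degree bound $\deg(\alpha) + \deg(\beta) \leq -2$, placing $[e_\alpha, e_\beta]$ in $\m$. For (b), the orthogonality of the good grading with respect to $(\cdot,\cdot)$ forces $\chi$ to vanish off $\g(-2)$, so I need only treat the case $\deg(\alpha) = \deg(\beta) = -1$; then $e_\alpha \in \l^{\perp_\omega}$ and $e_\beta \in \l$, and the identity $\chi([e_\alpha, e_\beta]) = -\omega(e_\alpha, e_\beta)$ combined with the definition of $\l^{\perp_\omega}$ closes the case.

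For the group-level statements, since $N$ is generated by the root subgroups $\{U_\alpha : \alpha \in \Phi(\n)\}$, it suffices to verify $u_\alpha(t)$-stability using the explicit formulas \eqref{e:adjoint}. Applied to $e_\beta$ with $\beta \in \Phi(\m)$, the exceptional case $\beta = -\alpha$ cannot arise because $\deg(-\alpha) \geq 1$, while $\beta = \alpha$ is trivial; hence the expansion $u_\alpha(t) \cdot e_\beta = e_\beta + \sum_{i \geq 1} m_{\alpha,\beta,i} t^i e_{i\alpha + \beta}$ involves only root vectors of degree $\leq -(i+1) \leq -2$, all of which sit in $\m$, giving $N$-stability of $\m$. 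Applying $\chi$ to the same expansion, orthogonality kills every term except possibly the one with $i = 1$ and $\deg(\alpha) = \deg(\beta) = -1$, and this last one vanishes by the Lagrangian calculation from (b), using that $m_{\alpha,\beta,1}$ is a unit in $\kk$ under hypothesis (H2) (the values $\pm 2, \pm 3$ only arise in types where those primes are bad). Thus $\chi$ is $N$-invariant on $\m$, so $u_\alpha(t)(y - \chi(y)) = u_\alpha(t) \cdot y - \chi(u_\alpha(t) \cdot y) \in \m_\chi$. The principal subtlety is that in positive characteristic one cannot simply exponentiate the Lie algebra argument to the group — this is precisely why the Chevalley-type formulas \eqref{e:adjoint} must be invoked — but with them in hand the whole verification is pure bookkeeping in the good grading.
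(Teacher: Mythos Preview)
Your argument is correct and follows essentially the same route as the paper's proof: both use the degree bound $[\n,\m] \sub \bigoplus_{j \le -2} \g(j)$ together with the isotropy condition on $\l$ for the Lie-algebra statement, and both invoke the Chevalley formulae \eqref{e:adjoint} rather than exponentiation for the group statement. One small remark: your appeal to $m_{\alpha,\beta,1}$ being a unit under (H2) is unnecessary, since what you need is $m_{\alpha,\beta,1}\,\chi(e_{\alpha+\beta}) = \chi([e_\alpha,e_\beta]) = 0$, which follows directly from (b) regardless of whether $m_{\alpha,\beta,1}$ is invertible.
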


\begin{proof}
Observe that $[\n, \m] = [\n, \m_\chi] \sub \bigoplus_{i \le -2} \g(j) \sub \m$.
Moreover, if $x \in \l$ and $y \in \l^{\perp_\omega}$, then $\chi([y,x]) = 0$, so that in fact $[y,x] \in \m_\chi$, and it
follows that $[\n, \m] \sub \m_\chi$.

We can now use the formulae in \eqref{e:adjoint}, to see that if $g \in N$ and $x \in \m$, then
we have that $g \cdot x = x + y$, where $y \in \bigoplus_{i \le -2} \g(j)$, and $\chi(y) = 0$ so
that $\chi(g \cdot x) = \chi(x)$.  It
follows that $g \cdot x \in \m$ and that $g \cdot (x - \chi(x)) = g \cdot x - \chi(g \cdot x) \in \m_\chi$.
\end{proof}

The above lemma allows us to define an adjoint action of $\n$ on $Q$ by
$$
x \cdot (u + I) = [x,u]+I,
$$
for $x \in \n$ and $u \in U(\g)$.  Also we can define
an adjoint action of $N$ on $Q$ by
$$
g \cdot (u + I) = (g \cdot u) + I,
$$
for $g \in N$ and $u \in U(\g)$.  We note that $Q$ is a locally finite $N$-module for this adjoint
action, and the differential of this $N$-module structure coincides with
the $\n$-module structure on $Q$ given by the adjoint action.

\begin{Lemma} \label{L:invariantsareanalgebra}
The invariant space
$$
Q^\n := \{u + I \in Q \mid [x,u] \in I \text{ for all } x \in \n\}
$$
inherits an algebra structure from $U(\g)$ with the
$N$-invariants
$$
Q^N := \{u + I \in Q \mid g \cdot u + I = u + I \text{ for all } g \in N\}
$$
embedded as a subalgebra.
\end{Lemma}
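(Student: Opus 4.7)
The plan is to transport the multiplication of $U(\g)$ via the formula $(u+I)(v+I) := uv+I$, and then verify in turn that (i) this formula is well defined on $Q^\n$, (ii) $Q^\n$ is closed under it, and (iii) $Q^N$ is a subalgebra. The one technical observation on which everything rests is the following: if $a \in I$ and $v+I \in Q^\n$, then $av \in I$. To see this, write $a = \sum_i u_i x_i$ with $u_i \in U(\g)$ and $x_i \in \m_\chi$, and compute
$$
av = \sum_i u_i v x_i + \sum_i u_i [x_i,v].
$$
The first sum lies in $U(\g)\m_\chi = I$ directly. For the second, writing $x_i = m_i - \chi(m_i)$ with $m_i \in \m$ gives $[x_i,v]=[m_i,v]$; since $\l$ is isotropic we have $\l \sub \l^{\perp_\omega}$, so $\m \sub \n$, and the $\n$-invariance of $v+I$ forces $[m_i,v] \in I$, whence $u_i[m_i,v] \in I$.

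Well-definedness of the product on $Q^\n$ is then straightforward: for $a,b \in I$ one expands $(u+a)(v+b)-uv = ub + av + ab$, and each summand lies in $I$, using that $I$ is a left ideal for $ub$ and $ab$, and the observation for $av$. To check closure, let $u+I, v+I \in Q^\n$ and $x \in \n$; then $[x,uv] = [x,u]v + u[x,v]$, and both terms lie in $I$, the first by reapplying the observation with $a := [x,u] \in I$. Associativity is inherited from $U(\g)$ and $1+I$ serves as an identity, so $Q^\n$ becomes an associative algebra.

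For the statement about $Q^N$, the inclusion $Q^N \sub Q^\n$ follows from the earlier remark that differentiating the adjoint $N$-action on $Q$ recovers the $\n$-action. For closure of $Q^N$ under multiplication, given $u+I, v+I \in Q^N$ and $g \in N$, the fact that $G$ acts on $U(\g)$ by algebra automorphisms yields $g \cdot (uv + I) = (g \cdot u)(g \cdot v) + I$; writing $g \cdot u = u + a$ and $g \cdot v = v + b$ with $a,b \in I$, the calculation already carried out shows $(g \cdot u)(g \cdot v) + I = uv + I$, so $uv + I \in Q^N$. The only real obstacle throughout is the one-sidedness of $I$: every time an element of $\m_\chi$ must be commuted past a representative of an invariant, one must invoke the invariance condition so that the resulting commutator is absorbed back into $I$.
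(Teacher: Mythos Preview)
Your proof is correct and follows essentially the same approach as the paper's. The paper is terser: it cites \cite{GG} for the well-definedness of the multiplication on $Q^\n$ (your ``key observation'' is exactly the computation behind that citation), and for the $Q^N$ part it phrases the argument as ``$N$ acts on $Q^\n$ by algebra automorphisms, so the invariants form a subalgebra,'' whereas you unwind this explicitly---but the content is identical.
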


\begin{proof}
We need to check that the multiplication on $Q^\n$ given by $(u + I)(v + I) = uv + I$ is well defined, and this
is done in the introduction of \cite{GG}.

Since the $\n$-module structure on $Q$ is given by the differential of the $N$-module structure,
we see that $N$ acts on $Q^\n$
and that $Q^N \sub Q^\n$.   Moreover, the action of $N$ on $Q^\n$ is by algebra automorphisms, so that $Q^N$ is a
subalgebra of $Q^\n$.
\end{proof}

This leads to the definition of the modular finite $W$-algebra.

\begin{Definition} \label{D:walgebra}
The
{\em finite $W$-algebra} associated to $e$ is the algebra
$$
U(\g,e) := Q^N.
$$
The {\em extended finite $W$-algebra} is
$$
\hU(\g,e) := Q^\n.
$$
\end{Definition}

The reader will notice that the definition of $U(\g,e)$ appears to depend upon the choice of
good grading for $e$ and the choice of isotropic subspace $\l$. In Section~\ref{S:indep}
we show that the isomorphism type really only depends on the orbit of $e$. By contrast,
the isomorphism type of the algebra $\hU(\g,e)$ depends on the dimension of $\n$.
From Section~\ref{S:applicationsofPBW} onwards we just consider the case
where $\l$ is a Lagrangian subspace of $\g(-1)$, and in this case Theorem~\ref{T:extendedpcentre}
shows that $\hU(\g,e)$ is then independent of this choice, up to isomorphism.

We note that the action of $N$ on $Q$ restricts to an action of $N$ on $\hat U(\g,e)$.
Then we have that $U(\g,e)$ is equal to the $N$-invariants in $\hat U(\g,e)$.  We note that
this approach has been considered by Premet in \cite[Remark~2.1]{PrGR} for $p$ sufficiently large,
and in that case the role of $U(\g,e)$ is played by the reduction modulo $p$ of the characteristic
zero $W$-algebra.

\subsection{\texorpdfstring{Realising $U(\g,e)$ as endomorphisms of $Q$}
{Realising U(g,e) as endomorphisms of Q}}
A common definition of the finite $W$-algebra over $\C$ is the endomorphism
algebra of the generalised Gelfand--Graev module, and here we record a similar description of $U(\g,e)$.
We denote the algebra of endomorphisms of $Q$ by
$\End_{U(\g)}(Q)$ and denote its opposite algebra by $\End_{U(\g)}(Q)^\op$.
The adjoint action of $N$ on $Q$ induces an action on  $\End_{U(\g)}(Q)^\op$ by
$(g \cdot f)(u+I) = g \cdot (f(g^{-1} \cdot u +I))$ for $g \in N$, $f \in \End_{U(\g)}(Q)$, $u \in U(\g)$,
and the invariant subalgebra is denoted $\End_{U(\g)}^N(Q)^\op$.

\begin{Lemma}
There is a natural algebra isomorphism
$$
\End_{U(\g)}^N(Q)^\op \isoto U(\g,e).
$$
\end{Lemma}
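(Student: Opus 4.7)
The plan is to exploit the cyclic structure of $Q$. Since $Q \cong U(\g)/I$ is generated as a $U(\g)$-module by the class $1 + I$, any endomorphism $f \in \End_{U(\g)}(Q)$ is completely determined by $f(1+I) \in Q$, so the evaluation map $\varepsilon : \End_{U(\g)}(Q) \to Q$, $f \mapsto f(1+I)$, is injective. Conversely, given $v + I \in Q$, the assignment $a + I \mapsto av + I$ defines a $U(\g)$-endomorphism of $Q$ precisely when $(x - \chi(x))v \in I$ for every $x \in \m$. Since $v(x-\chi(x)) \in U(\g) \m_\chi = I$ automatically, this condition is equivalent to $[x, v] \in I$ for all $x \in \m$, i.e.\ to $v + I \in Q^\m$. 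Hence $\varepsilon$ identifies $\End_{U(\g)}(Q)$ as a vector space with $Q^\m$.

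Next I would check compatibility with the algebra structures. For $f, f' \in \End_{U(\g)}(Q)$ with $\varepsilon(f) = v + I$ and $\varepsilon(f') = w + I$, we have $(f' \circ f)(1+I) = f'(v + I) = v f'(1 + I) = vw + I$, using that $f'$ is $U(\g)$-linear. Thus composing on the outside corresponds to multiplying on the right in $Q^\m$, so $\varepsilon$ becomes an algebra isomorphism after passing to opposites: $\End_{U(\g)}(Q)^{\op} \isoto Q^\m$ (with multiplication inherited from $U(\g)$ as in Lemma~\ref{L:invariantsareanalgebra}, whose proof works for $\m$-invariants in the same way as for $\n$-invariants). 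For the $N$-equivariance, one checks from the definition that if $\varepsilon(f) = v + I$ then $\varepsilon(g \cdot f) = g \cdot v + I$, because $g \cdot 1 = 1$ in the adjoint action; so $\varepsilon$ is $N$-equivariant and restricts to an algebra isomorphism $\End_{U(\g)}^N(Q)^{\op} \isoto Q^\m \cap Q^N$.

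It remains to observe that $Q^\m \cap Q^N = Q^N = U(\g,e)$. This uses $\m \sub \n$, which holds because $\l$ is isotropic in $\g(-1)$ so $\l \sub \l^{\perp_\omega}$. Since $Q$ is a locally finite $N$-module whose differential recovers the adjoint $\n$-action, and $N$ is connected, $N$-invariance implies $\n$-invariance; combined with $\m \sub \n$ this yields $Q^N \sub Q^\n \sub Q^\m$, giving the required equality. The only mildly delicate point is bookkeeping with the opposite algebra in the composition computation, and checking that taking $N$-invariants on the $\End$ side really corresponds to the given $N$-action on $Q$ — both are essentially formal once one uses that $1 + I$ is $N$-fixed.
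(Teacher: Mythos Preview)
Your proof is correct and follows essentially the same approach as the paper: the paper invokes Frobenius reciprocity to identify $\End_{U(\g)}(Q)^\op$ with $Q^\m$ via $f \mapsto f(1+I)$, notes that this map intertwines the $N$-actions, and takes $N$-invariants. You have simply unpacked the Frobenius reciprocity step explicitly and spelled out why $(Q^\m)^N = Q^N$, which the paper leaves implicit.
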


\begin{proof}
By Frobenius reciprocity the map
$\End_{U(\g)}(Q)^\op \to Q^\m \sub Q^\n$ given by
$f \mapsto f(1+I)$ is an isomorphism.  Moreover,
this isomorphism intertwines the actions of
$N$ on $\End_{U(\g)}(Q)$ and $Q^\m$.  Hence, we obtain the required
isomorphism.
\end{proof}

We also note that if $\l$ is chosen to be a Lagrangian subspace of $\g(-1)$, then
the proof above shows that $\hU(\g,e)$ is isomorphic to $\End_{U(\g)}(Q)^\op$.

\section{\texorpdfstring{The PBW theorem for $U(\g,e)$ and $\hU(\g,e)$}{The PBW theorem for U(g,e) and hatU(g,e)}}

In this section we prove the PBW theorem for $U(\g,e)$,
which says that it is a filtered deformation of the coordinate ring of a good transverse slice
to the orbit of $e$.  We also prove a PBW theorem for $\hU(\g,e)$.
We continue to use the notation from the previous section; in particular, we have
fixed a good grading $\Gamma_\gamma : \g = \bigoplus_{j \in \frac{2}{d}\Z} \g(j)$ and a compatible isotropic subspace $\l \sub \g(-1)$.

\subsection{Good transverse slices} \label{ss:goodtransverse}

Over fields of characteristic zero the Jacobson--Morozov theorem allows one to construct
transverse slices to all nilpotent orbits in Lie algebras of reductive algebraic groups,
as done by Slodowy in \cite[\S7.4]{Sl}.
Over fields of positive characteristic, such a choice of slice always exists
but it may fail to be transversal.
The correct replacement for the Slodowy slice is a good transverse slice, as was introduced
by Spaltenstien in \cite{Sp}, and we now recall.

The space $[\g, e]$ is stable under the action of $T_{[e]}$, so
decomposes as in \eqref{e:goodrestricted} and we may
choose a $T_{[e]}$-stable complement $\v$ to $[\g,e]$ in $\g$. We call $e + \v$ a {\em good transverse slice}
to $G \cdot e$.

Recall that the cocharacter $\gamma \in X_*(T^{[e]})$ defines a good grading for $e$ and that $\gamma(t)e = t^d e$,
where $d = \deg_\gamma(e) \in \Z_{>0}$.
We define the cocharacter $\mu : \kk^\times \to \GL(\g)$ by
\begin{equation} \label{e:muaction}
\mu(t)(x) = t^d(\gamma(t^{-1}) \cdot x),
\end{equation}
so, explicitly, $\mu(t)$ acts on $\g(j)$ by $t^{d-\frac{jd}{2}}$. By the definition of a good grading we have
$\bigoplus_{j \geq 1} \g(j) \sub [\g,e]$ and so $\v \sub \bigoplus_{i < 1} \g(i)$.
It follows that $\mu(\kk^\times)$ preserves $e + \v$ and acts with strictly positive eigenvalues on $\v$.
Therefore, $\mu$ defines a contracting $\kk^\times$-action with unique fixed point $e$.  Also we observe that
$\v \sub \m^\perp$, because $\m \sub \bigoplus_{i \le -1} \g(i)$ so that
$\bigoplus_{i < 1} \g(i) \sub \m^\perp$; here we recall that $\m^\perp$ is the orthogonal space to $\m$ with
respect to the form $(\cdot\,,\cdot)$.

Since $\g^e$ is orthogonal to $[\g,e]$ with respect to $(\cdot\,,\cdot)$, we see that the map
$\g^e  \to \v^*$ given by $x \mapsto (x,\cdot)$ is an isomorphism, or
in other words that the form $(\cdot, \cdot)$ restricts to a non-degenerate pairing
\begin{equation} \label{e:vdual}
\g^e \times \v \to \kk.
\end{equation}
This can be observed by noting that $[\g,e]$ is orthogonal to $\g^e$, and
then using the fact that $\v$ is a complement to $[\g,e]$ in $\g$.

We have the following important lemma about the good transverse slice $e + \v$.
For the statement we observe that $e + \m^\perp$ is stable under the adjoint action of $N$.
This can be confirmed by first noticing that $\m_\chi$ can be interpreted as a set of functions on $\g$ via $(\cdot\,,\cdot)$
whose zero-set is precisely $e+\m^\perp$, and then recalling that
$\m_\chi$ is $N$-stable thanks to Lemma~\ref{L:Npreservesm}.
The next lemma can be proved using the approach in \cite[Lemma~2.1]{GG}, which adapts to our setting,
as was observed in \cite[Proposition~6.9]{Ta};
see also \cite[Lemma~3.2]{PrCQ} for $p \gg 0$.  The contracting
$\kk^\times$-action from \eqref{e:muaction} is key to the argument.

\begin{Lemma}\label{L:isomorphismlemma}
The adjoint action induces an isomorphism of varieties
$$
N \times (e + \v) \isoto e + \m^\perp.
$$
\end{Lemma}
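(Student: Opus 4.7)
The plan is to show that $\alpha : N \times (e+\v) \to e+\m^\perp$ defined by $(g, y) \mapsto g\cdot y$ has the claimed target, to compute its differential at the base point $(1, e)$, and then to use the contracting $\kk^\times$-action on both sides to deduce that $\alpha$ is a global isomorphism. First, $e + \m^\perp$ is $N$-stable (recorded just before the lemma) and $\v \sub \m^\perp$ (noted in \S\ref{ss:goodtransverse}), so $\alpha$ does land in $e + \m^\perp$. Both sides have the same dimension $\dim \m^\perp = \dim \g - \dim \m$: the source has dimension $\dim \n + \dim \v = \dim \n + \dim \g^e$ since $\v$ is a complement to $[\g,e]$, and $\dim \m + \dim \n + \dim \g^e = \dim \g$ is exactly \eqref{e:dimm+dimn}.

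Next I compute the differential $d\alpha_{(1, e)} : \n \oplus \v \to \m^\perp$, which sends $(x, v)$ to $[x, e] + v$. Since $\n \sub \bigoplus_{j \le -1} \g(j)$ and $\gamma$ is a good cocharacter, $\ad e$ is injective on $\n$, so $\dim [\n, e] = \dim \n$. Moreover $[\n, e] \cap \v = 0$ because $[\n, e] \sub [\g, e]$ while $\v$ is a complement to $[\g, e]$ in $\g$. The dimension count from the previous paragraph then forces $[\n, e] \oplus \v = \m^\perp$, so $d\alpha_{(1,e)}$ is a linear isomorphism.

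To promote this infinitesimal statement to a global isomorphism I exploit the contracting $\kk^\times$-action $\mu$ from \eqref{e:muaction}. I equip $N$ with the conjugation action $t \cdot g := \gamma(t)^{-1} g \gamma(t)$, which contracts $N$ to $\{1\}$ because every root in $\Phi(\n)$ lies in the strictly negative part of the $\gamma$-grading. Equipping $e + \v$ and $e + \m^\perp$ with the actions $t \cdot (e+w) = e + \mu(t)w$, a direct computation using $\gamma(t) \cdot e = t^d e$ gives $\alpha(t\cdot g,\, t\cdot (e+v)) = e + \mu(t)\bigl(g\cdot(e+v) - e\bigr) = t \cdot \alpha(g, e+v)$, so $\alpha$ is $\kk^\times$-equivariant. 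The standard contracting-action principle, as used in \cite[Lemma~2.1]{GG} and \cite[Proposition~6.9]{Ta}, then upgrades the isomorphism on tangent spaces at the fixed point to an isomorphism of affine varieties.

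The main obstacle is this last step: one must argue that a $\kk^\times$-equivariant morphism between affine varieties with contracting actions and isolated fixed points, which is an isomorphism on tangent spaces at that point, is in fact an isomorphism of varieties. This combines a bijectivity-on-points argument and an etaleness-spreading argument, both propagated from the fixed point by equivariance; once the principle is taken for granted, the real content of the lemma is the tangent-space calculation of the second paragraph.
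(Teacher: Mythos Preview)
Your proof is correct and follows precisely the approach the paper indicates: the paper does not supply its own argument but simply refers to \cite[Lemma~2.1]{GG} (adapted to positive characteristic as in \cite[Proposition~6.9]{Ta}), noting that the contracting $\kk^\times$-action $\mu$ is the key ingredient. Your write-up is exactly a spelling-out of that argument---the dimension count via \eqref{e:dimm+dimn}, the tangent-space computation $d\alpha_{(1,e)}(x,v) = [x,e]+v$ using injectivity of $\ad e$ on $\bigoplus_{j\le -1}\g(j)$, and the $\kk^\times$-equivariance plus contraction to propagate the local isomorphism globally---so there is nothing to add.
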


\subsection{The Kazhdan filtration} \label{ss:kazhdan}

We proceed as in \cite[Section~4]{GG} to introduce the Kazhdan filtration on $U(\g,e)$, which is derived from our good
grading. The {\em Kazhdan filtration} $(F_j U(\g))$
of $U(\g)$ is the $\frac{2}{d}\Z$-filtration
where $F_j U(\g)$ is spanned by monomials $x_1 \cdots x_k$ with $x_i \in \g(n_i)$ and
\begin{eqnarray}
\label{e:kazfirst}
\sum_{i=1}^k n_i + 2k \leq j.
\end{eqnarray}
This makes $U(\g)$ into a filtered algebra, and the associated graded algebra is denoted $\gr U(\g)$.  We have
that $\gr U(\g)$ is isomorphic to the symmetric algebra $S(\g)$
with $\g(j)$ lying in graded degree $j + 2$.  We write this grading as $S(\g) = \bigoplus_{j \in \frac{2}{d}\Z} S(\g)[j]$.
We note that since we are working with a filtration by $\frac{2}{d}\Z$, we require minor modifications
to the definitions of filtered algebras and associated graded algebras, but as these modifications
are clear we omit the details.
The Kazhdan filtration descends to all subquotients of $U(\g)$ in the usual manner, so that $I$,
$Q$, $U(\g,e)$ and $\hU(\g,e)$ inherit the Kazhdan filtration.

Using $(\cdot\,,\cdot)$ we may identify $S(\g) = \kk[\g]$.
Under this identification the grading on $\kk[\g]$ is given by
\begin{equation} \label{e:kggrading}
\kk[\g][j] = \{f \in \kk[\g] \mid f(\mu(t) \cdot x) = t^{\frac{dj}{2}}f(x) \text{ for all } t \in \kk^\times, x \in \g\}.
\end{equation}
We have that $\gr Q \cong \gr U(\g)/ \gr(I)$,
and $\gr I$ is the ideal of $\gr U(\g) = S(\g)$ generated by
$\m_\chi = \{x - \chi(x) \mid x \in \m\}$.
Thus under the identification $S(\g) = \kk[\g]$, we have
$\gr Q = \kk[e+\m^\perp]$, because $\gr I$ identifies with the ideal of functions
vanishing on $e + \m^\perp$.
Further, the action of $N$ on $e + \m^\perp$ induces an action of $N$ on $\kk[e+\m^\perp]$
by automorphisms.

There is a {\em Kazhdan grading} on $\kk[N]$ determined from the conjugation action of
$\gamma(t)$ on $N$, for $t \in \kk^\times$; so for $j \in \Z$ the $j$th graded piece
is given by
\begin{equation} \label{e:kNgrading}
\kk[N][j] := \{f \in \kk[N] \mid f(\gamma(t)^{-1}n\gamma(t)) = t^{\frac{dj}{2}}f(n) \text{ for all } t \in \kk^\times, n \in N\}.
\end{equation}
Using this grading
$Q$ is a Kazhdan filtered $N$-module, in the sense that the
comodule map $Q \to Q \otimes \kk[N]$ is filtered.  By taking the associated
graded map $\gr Q \to \gr Q \otimes \kk[N]$, we see that $\gr Q$ obtains
the structure of a Kazhdan graded $N$-module.  Moreover, this agrees with the
action of $N$ on $\kk[e+\m^\perp]$ through the identification $\gr Q = \kk[e+\m^\perp]$.

By Lemma~\ref{L:isomorphismlemma}, we have isomorphisms
\begin{equation} \label{e:emperp}
\gr Q = \kk[e+\m^\perp] \cong \kk[N \times (e+\v)] \cong \kk[N] \otimes \kk[e+\v].
\end{equation}
Here the gradings on $\kk[e+\m^\perp]$ and $\kk[e+\v]$ are determined from \eqref{e:kggrading}
and the grading on $\kk[N]$ is given by \eqref{e:kNgrading}.
Moreover, through this isomorphism the action of $N$ on $\kk[N] \otimes \kk[e+\v]$ is
given by $x \cdot (f \otimes g) = (x \cdot f) \otimes g$, where
\begin{equation} \label{e:Naction}
(x \cdot f)(n) = f(x^{-1}n)
\end{equation}
for $x,n \in N$, $f \in \kk[N]$ and $g \in \kk[e+\v]$.

\subsection{\texorpdfstring{PBW theorem for $U(\g,e)$}{PBW theorem for U(g,e)}}
In order to prove the PBW theorem for $U(\g,e)$, we need to
look at the cohomology of $\kk[N]$ as a module for $N$
with the action of $N$ on $\kk[N]$ by left translation given in \eqref{e:Naction}.
This cohomology $H^i(N,\kk[N])$ is given in \cite[Lemma~4.7]{JaAG} by
\begin{equation} \label{e:NcohomkN}
H^i(N, \kk[N]) = \left\{\begin{array}{cc} \kk & \text{ if } i = 0 \\ 0 & \text{ if } i > 0.\end{array}\right.
\end{equation}
With this in hand, we are ready to prove the PBW theorem for $U(\g,e)$.  Our proof
follows the same lines as that in \cite[Theorem~4.1]{GG}, but with $N$-cohomology
in place of $\n$-cohomology.

\begin{Theorem} \label{T:PBWtheorem}
The natural map
$\gr(Q^N) \to (\gr Q)^N$
is an isomorphism. Thus we have an isomorphism $\gr U(\g,e) \cong \kk[e + \v]$.
\end{Theorem}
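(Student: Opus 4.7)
The plan is to follow the characteristic zero argument of \cite[Theorem~4.1]{GG}, adapted by replacing Lie algebra cohomology with rational cohomology of the algebraic group $N$, as signaled by the authors just above the theorem. First, Lemma~\ref{L:isomorphismlemma} provides the $N$-equivariant algebra isomorphism $\gr Q \cong \kk[e+\m^\perp] \cong \kk[N] \otimes \kk[e+\v]$ from \eqref{e:emperp}, with $N$ acting by left translation on $\kk[N]$ and trivially on $\kk[e+\v]$. Taking $N$-invariants and using $\kk[N]^N = \kk$ identifies $(\gr Q)^N$ with $\kk[e+\v]$, which also delivers the second assertion of the theorem once the principal isomorphism is established.

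Injectivity of $\gr(Q^N) \to (\gr Q)^N$ is straightforward: the symbol of any $q \in F_j Q \cap Q^N$ lies in $(\gr Q)^N$ (because the identity $\Delta(q) = q \otimes 1$ for the filtered $\kk[N]$-comodule map $\Delta: Q \to Q \otimes \kk[N]$ passes to the associated graded), and if this symbol vanishes in $\gr_j Q$ then $q \in F_{j-\frac{2}{d}} Q \cap Q^N$. For surjectivity, given $\bar q \in (\gr_j Q) \cap (\gr Q)^N$ I would pick any lift $q_0 \in F_j Q$ and form the rational $1$-cocycle
\[
\phi: N \to F_{j-\frac{2}{d}}Q, \qquad \phi(n) = n \cdot q_0 - q_0,
\]
which lands in $F_{j-\frac{2}{d}}Q$ precisely because $\Delta(\bar q) = \bar q \otimes 1$ forces $\Delta(q_0) - q_0 \otimes 1 \in F_{j-\frac{2}{d}}(Q \otimes \kk[N])$. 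Finding $r \in F_{j-\frac{2}{d}}Q$ with $n \cdot r - r = \phi(n)$ produces the desired $N$-invariant lift $q_0 - r$ of $\bar q$, so surjectivity reduces to killing the class $[\phi] \in H^1(N, F_{j-\frac{2}{d}}Q)$.

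The key cohomological input, replacing the Chevalley--Eilenberg computation used in \cite{GG}, is equation \eqref{e:NcohomkN}: $H^i(N, \kk[N]) = 0$ for $i > 0$. Combined with the identification of $\gr Q$ above and the triviality of the $N$-action on $\kk[e+\v]$, this yields $H^i(N, \gr Q) = 0$ for $i > 0$. The Kazhdan filtration on $Q$ is bounded below---since $\gr Q \cong \kk[e+\m^\perp]$ is concentrated in non-negative Kazhdan degrees because $\m^\perp$ has strictly positive Kazhdan weights, we have $F_j Q = 0$ for $j < 0$---and each $F_j Q$ is finite-dimensional, so an inductive filtration/spectral-sequence argument will transfer the vanishing to $H^1(N, F_{j-\frac{2}{d}}Q)$ and kill $[\phi]$. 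The principal obstacle, precisely as the authors flag, is that $H^*(N, \kk)$ is nonzero in positive characteristic; one cannot reduce piece-by-piece to individual Kazhdan-graded pieces $\gr_j Q$ (on which the induced $N$-action is trivial and the requisite vanishing therefore fails), and must instead exploit the global $N$-module isomorphism $\gr Q \cong \kk[N] \otimes \kk[e+\v]$ throughout the lifting argument, which is the crux of the modular adaptation.
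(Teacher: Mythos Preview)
Your outline has the right ingredients and you correctly diagnose the central difficulty: in positive characteristic one cannot argue piece-by-piece through the Kazhdan layers $\gr_j Q$, because the induced $N$-action on each layer is trivial and $H^*(N,\kk)$ does not vanish. The problem is that your proposed resolution does not actually escape this trap. You reduce surjectivity to the vanishing of $H^1(N, F_{j-\frac{2}{d}}Q)$, but this group is typically nonzero: already $F_0 Q = \kk$ gives $H^1(N,\kk)\neq 0$ for any nontrivial unipotent $N$ over $\kk$. More generally, $F_k Q$ is an iterated extension of the trivial modules $\gr_i Q$ for $i\le k$, and there is no mechanism by which the cohomology of a finite truncation of $\kk[N]\otimes\kk[e+\v]$ would vanish --- it is only the full regular representation $\kk[N]$ that is acyclic. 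So the claim that ``an inductive filtration/spectral-sequence argument will transfer the vanishing to $H^1(N, F_{j-\frac{2}{d}}Q)$'' is precisely the step that fails, and your final sentence acknowledges the obstacle without supplying a replacement argument.

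The paper avoids this by never isolating a finite filtration piece. Instead it filters the \emph{entire} Hochschild complex $C^\bullet(N,Q) = Q\otimes\kk[N]^{\otimes\bullet}$ by the Kazhdan filtration and runs the spectral sequence of that filtered complex. The point is that the associated graded complex is $C^\bullet(N,\gr Q)$ with the \emph{nontrivial} $N$-comodule structure on $\gr Q$ (the one coming from the identification $\gr Q\cong\kk[N]\otimes\kk[e+\v]$), so the $E_1$-page is $H^\bullet(N,\gr Q)$ as a whole, which by \eqref{e:NcohomkN} is concentrated in cohomological degree $0$. The spectral sequence therefore collapses at $E_1$, yielding $\gr H^n(N,Q)\cong H^n(N,\gr Q)$ for every $n$ simultaneously; the case $n=0$ is the theorem. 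No vanishing of $H^1$ of any truncated module $F_k Q$ is ever invoked.
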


\begin{proof}
For the duration of this proof we rescale all of the gradings and filtrations defined in
\eqref{e:kazfirst}, \eqref{e:kggrading}, \eqref{e:kNgrading} by a factor of $d/2$ so that we
obtain $\Z$-filtrations and $\Z$-gradings (the reason for this is simply to align our notation
with the conventions in the theory of spectral sequences).
Hence we have $Q = \bigcup_{i \in \Z_{\ge 0}} F_i Q$ and $\kk[N] = \bigoplus_{i \in \Z_{\ge 0}} \kk[N][i]$
and the isomorphism \eqref{e:emperp} is now homogeneous
with respect to the $\Z$-gradings.

The complex for computing the cohomology of $Q$ as an $N$-module
given in \cite[\S4.14]{JaAG} has $r$th term
$$
C^r(N,Q) = Q \otimes \kk[N]^{\otimes r}
$$
and the differential $C^r(N,Q) \to C^{r+1}(N,Q)$ is a certain signed sum of
maps determined from the comodule map $Q \to Q \otimes \kk[N]$ and the comultiplication
$\kk[N] \to \kk[N] \otimes \kk[N]$. We filter this complex by $F_i C^r(N, Q) := (F_i Q) \otimes \kk[N]^{\otimes r}$
for $i \in \Z$. It follows from the definitions that the differentials respect this filtration and that
$\gr C^r(N, Q) \cong C^r(N, \gr Q)$ is the complex for calculating the $N$-cohomology of $\gr Q$.

There is a standard spectral sequence for computing cohomology of a filtered complex, for which the first page is
 \begin{eqnarray}
 \label{e:thesequence}
 E_1^{r,s} := H^{r+s}( F_r C(N, Q) / F_{r-1} C(N, Q)) \cong H^{r+s} (C(N, F_r Q / F_{r-1} Q))
 \end{eqnarray}
 and differentials $\partial_1 : E_1^{r,s} \rightarrow E_1^{r -1, s+ 2}$.
As explained in \S\ref{ss:kazhdan}, we have the identification $\gr Q = \kk[e+\m^\perp]$ and
the isomorphism $\kk[e+\m^\perp]  \cong \kk[N] \otimes \kk[e+\v]$ of graded $\kk[N]$-comodules.
Thus from \eqref{e:NcohomkN}
we deduce that
\begin{eqnarray}\label{e:thecohom}
H^i(N,\gr Q) \cong \left\{\begin{array}{cc} \kk[e + \v] & \text{ if } i = 0 \\ 0 & \text{ if } i > 0.\end{array}\right.
\end{eqnarray}
Combining \eqref{e:thesequence} and \eqref{e:thecohom} we have $E_1^{r, s} = 0$ unless $r + s = 0$. It follows that the differentials $\partial_1$ are all zero,
and the spectral sequence collapses on the first page, which immediately implies $\gr H^{r}(C(N, Q)) \cong H^r(\gr C(N, Q)))$ for all $r$.
In particular, the natural map $\gr H^0(N,Q) \to H^0(N,\gr Q)$ is an isomorphism. Since, $H^0(N,Q) = Q^N = U(\g,e)$ and $(\gr Q)^N = H^0(N,\gr Q) \cong \kk[e+\v]$,
we deduce the required isomorphisms.
\end{proof}

\subsection{\texorpdfstring{The PBW theorem for $\hU(\g,e)$}{The PBW theorem for hatU(g,e)}}
We move on to prove a PBW theorem for the extended finite $W$-algebra
$\hU(\g,e)$, which is stated in
Theorem~\ref{T:extendedPBWtheorem} below.  Here our methods
are more explicit, as we can show that $\kk[N]$ is free as a module
over $U_0(\n)$, and this is the key ingredient of our proof.

We consider the action of $\n$ on $\kk[N]$ given by taking the differential
of the $N$-module structure given by \eqref{e:Naction}; we denote this action
by $(x,f) \mapsto x \cdot f$ for $x \in \n$ and $f \in \kk[N]$.
We note that this differential
is defined as $\kk[N]$ is a locally finite as an $N$-module, and
gives an action of the restricted enveloping algebra $U_0(\n)$ on $\kk[N]$.
We use the setup from Remark~\ref{R:char0a}, so that as in \cite[\S9.6]{JaAG},
we can identify $U_0(\n)$ with the algebra of distributions of the first
Frobenius kernel $N_1$ of $N$.
Through this identification the structure of $\kk[N]$ as an $N_1$-module
is just given by the restriction of $\kk[N]$ as an $N$-module.

We enumerate $\Phi(\n) = \{\alpha_1,\dots,\alpha_l\}$.
Elements of $N$ can be written uniquely in the form
$n = \prod_{i=1}^l u_{\alpha_i}(t_i)$, where $t_i \in \kk$, and we define $T_i \in \kk[N]$ by
$T_i(n) = t_i$.  Then $\kk[N] = \kk[T_1,\dots,T_l]$.  For $\ba = (a_1,\dots,a_l) \in \Z_{\ge 0}^l$,
we define $T^\ba = \prod_{i=1}^l T_i^{a_i}$.

Let $J_1$ be the ideal of $\kk[N]$ generated by $\{T_1^p,\dots,T_l^p\}$, so that
$\kk[N_1] = \kk[N]/J_1$.
The action of $U_0(\n)$ on $\kk[N]$ factors to an action on $\kk[N_1]$ and this
agrees with the left regular action of $N_1$ on $\kk[N_1]$.  Thus by \cite[Lemma~4.7]{JaAG} we have that $H^0(N_1,\kk[N_1]) = \kk$.

We consider $U_0(\n) \cdot \bar T^\bv \sub \kk[N_1]$ where $\bv = (p-1,\dots,p-1)$ and $\bar T^\bv$ denotes the image of $T^\bv$ in $\kk[N_1]$.
We note that the good grading on $\g$ gives a non-positive grading of $U_0(\n)$, which we write as
$U_0(\n) = \bigoplus_{j \in \frac{2}{d} \Z_{\le 0}} U_0(\n)(j)$, with $U_0(\n)(0) = \kk$.
Let $m \in \frac{2}{d} \Z_{\ge 0}$ be maximal such that
there exists $u \in U_0(\n)(-m)$ with $u \cdot \bar T^\bv \ne 0$.  Then we have
$u \cdot \bar T^\bv$ is annihilated by all elements of $\n$ and is thus an element of
$H^0(N_1,\kk[N_1]) = \kk$.  We deduce that up to scalar multiples, we must have that
$u = e^{\bv} := \prod_{i=1}^l e_{\alpha_i}^{p-1}$ and that $e^{\bv} \cdot \bar T^{\bv} = c$ is a nonzero
element $c \in \kk$.  A standard argument, based on weights for $T$,
shows that $U_0(\n)$ viewed as the left regular module for itself has 1-dimensional socle spanned by
$e^{\bv}$.  From this it follows that $\Ann_{U_0(\n)}(\bar T^\bv) = \{0\}$.
Thus, as $\dim \kk[N_1] = \dim U_0(\n)$, we have that $\kk[N_1] = U_0(\n) \cdot \bar T^\bv$ is free of rank 1 as a $U_0(\n)$-module.

For $\bb,\bc \in \Z_{\ge 0}^l$ and $u \in U_0(\n)$, we have that
$u \cdot T^{p\bb+\bc} = T^{p\bb}(u \cdot T^{\bc})$.  Therefore,
the cyclic module $U_0(\n) \cdot T^{p\bb+\bv} \sub \kk[N]$ generated by $T^{p\bb+\bv}$ is
free for any $\bb \in \Z_{\ge 0}^l$.




Let $\Z_{[0,p)}^l = \{\ba = (a_1,\dots,a_l) \in \Z^l \mid  0 \le a_i < p\}$
and define $\kk[N]_\mathrm{res}$ to be the span of all $T^\ba$ for $\ba \in \Z_{[0,p)}^l$.
Take $f \in \kk[N]^\n$ and write it as $f = \sum_{\ba \in \Z_{\ge 0}^l} T^{p\ba} f_\ba$, where
$f_\ba \in \kk[N]_\mathrm{res}$.
Given $x \in \n$ we have
$0 = x \cdot f = \sum_{\ba \in \Z_{\ge 0}^l} T^{p\ba}(x \cdot f_\ba)$.
Using the fact that $U_0(\n) \cdot \bar T^\bv \cong \kk[N_1]$ is a free $U_0(\n)$-module,
we can recursively deduce that $x \cdot f_\ba = 0$ for each $\ba \in \Z_{\ge 0}^l$,
and thus that $f_\ba \in \kk$.  Hence, we deduce that $\kk[N]^\n = \kk[N]^p$.

We define $e^\ba = \prod_{i=1}^s e_{\alpha_i}^{a_i}$ for $\ba \in \Z_{[0,p)}^l$ and consider
$$
\mathfrak B = \{e^\ba \cdot T^{p\bb+\bv} \mid \ba \in \Z_{[0,p)}^l, \bb \in \Z_{\ge 0}^l\}.
$$
Again using the fact that $U_0(\n) \cdot \bar T^\bv = \kk[N_1]$ is a free $U_0(\n)$-module,
we have that $\mathfrak B$ is a linearly independent set.
For any $j \in \Z$, the number of elements
of $\mathfrak B$ with degree $j$ with respect to grading on $\kk[N]$ given
by \eqref{e:kNgrading}
is equal to the number of monomials in $\{T^\bc \mid \bc \in \Z_{\ge 0}^s\}$
of degree $j$.  Therefore, we see that $\mathfrak B$ is in fact a basis of $\kk[N]$.

Putting this all together we deduce that
\begin{equation} \label{e:kNfree}
\kk[N] = \bigoplus_{\bb \in \Z_{\ge 0}^l} U_0(\n) \cdot T^{p\bb+\bv}
\end{equation}
so that $\kk[N]$ is free over $U_0(\n)$.
We have seen that the $\n$-invariants in $\kk[N]$ are
given by $\kk[N]^\n = \kk[N]^p$.  From this we see that we must have
\begin{equation} \label{e:e^vT}
e^\bv \cdot T^{p\bb+\bv} = c T^{p\bb}
\end{equation}
for any $\bb \in \Z_{\ge 0}^l$, where as before $c$ is some nonzero element of $\kk$.

Now that we have shown that $\kk[N]$ is free as a $U_0(\n)$-module, we are ready to prove the
PBW theorem for $\hU(\g,e)$.

\begin{Theorem}\label{T:extendedPBWtheorem}
The natural map $\gr(Q^\n) \to (\gr Q)^\n$ is an isomorphism.
Thus we have an isomorphism $\gr \hU(\g,e) \cong \kk[N]^p \otimes \kk[e + \v]$.
\end{Theorem}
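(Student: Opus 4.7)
The plan is to follow the spectral sequence argument of Theorem~\ref{T:PBWtheorem}, but with the first Frobenius kernel $N_1$ of $N$ playing the role of $N$. Since $Q$ is a locally finite $N$-module and $\n = \Lie N_1$, the action of $\n$ on $Q$ factors through $U_0(\n)$ and agrees with the action of $N_1$; in particular $Q^\n = Q^{N_1}$ and $(\gr Q)^\n = (\gr Q)^{N_1}$. The Hochschild cochain complex $C^r(N_1, Q) := Q \otimes \kk[N_1]^{\otimes r}$ (see \cite[\S4.14]{JaAG}) computes $H^*(N_1, Q)$ and inherits a filtration $F_i C^r(N_1, Q) := F_i Q \otimes \kk[N_1]^{\otimes r}$, whose associated graded is the corresponding complex for $\gr Q$. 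Exactly as in Theorem~\ref{T:PBWtheorem}, this yields a convergent spectral sequence with
\[
E_1^{r,s} \cong H^{r+s}(N_1, F_r Q / F_{r-1} Q)
\]
computing $\gr H^*(N_1, Q)$.

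The crucial computation is that $H^i(N_1, \gr Q) = 0$ for $i > 0$. By \eqref{e:emperp} we have $\gr Q \cong \kk[N] \otimes \kk[e+\v]$ as graded $N$-modules, with $N$ acting only on the first tensor factor, so
\[
H^i(N_1, \gr Q) \cong H^i(N_1, \kk[N]) \otimes \kk[e+\v].
\]
The decomposition \eqref{e:kNfree} exhibits $\kk[N]$ as a free module over $U_0(\n)$; since $U_0(\n)$ is a Frobenius algebra, as is standard for restricted enveloping algebras, free modules are injective, whence $H^i(N_1, \kk[N]) \cong \operatorname{Ext}^i_{U_0(\n)}(\kk, \kk[N]) = 0$ for all $i > 0$. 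Combined with the identification $\kk[N]^\n = \kk[N]^p$ already established above, this gives $H^0(N_1, \gr Q) \cong \kk[N]^p \otimes \kk[e+\v]$ together with vanishing in positive degrees.

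It follows that $E_1^{r,s} = 0$ unless $r+s = 0$, so the spectral sequence collapses at $E_1$. Passing to $E_\infty$ yields
\[
\gr \hU(\g,e) \;=\; \gr(Q^{N_1}) \;\cong\; H^0(N_1, \gr Q) \;\cong\; \kk[N]^p \otimes \kk[e+\v],
\]
and in particular the natural map $\gr(Q^\n) \to (\gr Q)^\n$ is an isomorphism. The main technical point is the vanishing of higher $N_1$-cohomology of $\kk[N]$, but thanks to the explicit free decomposition \eqref{e:kNfree} this reduces to a standard fact about Frobenius algebras, so no serious obstacle arises beyond that already handled in the proof of Theorem~\ref{T:PBWtheorem}.
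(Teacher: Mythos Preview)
Your proof is correct and takes a genuinely different route from the paper's. Both arguments rely on the freeness result \eqref{e:kNfree}, but they exploit it differently. The paper proceeds by hand: it picks an explicit $U_0(\n)$-basis $\{T^{p\bb+\bv}\otimes x^\bc\}$ of $\gr Q$, lifts it to a $U_0(\n)$-basis of $Q$ by a standard filtration argument, and then reads off a basis of $Q^\n$ using the explicit formula \eqref{e:e^vT}, checking directly that the natural map $\gr(Q^\n)\to(\gr Q)^\n$ matches up bases. Your argument instead reruns the spectral sequence of Theorem~\ref{T:PBWtheorem} with $N_1$ in place of $N$, and replaces the input \eqref{e:NcohomkN} by the vanishing of $H^{>0}(N_1,\kk[N])$, which you obtain from \eqref{e:kNfree} together with the self-injectivity of the Frobenius algebra $U_0(\n)$ (noting that $U_0(\n)$ is Noetherian, so the infinite direct sum \eqref{e:kNfree} remains injective).

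What each approach buys: yours is cleaner and conceptually uniform with the proof of Theorem~\ref{T:PBWtheorem}, avoiding any explicit basis bookkeeping; the paper's is more elementary in that it does not invoke the Frobenius property or $\operatorname{Ext}$-vanishing, and as a by-product it produces an explicit basis $\{e^\bv\cdot u(\bb,\bc)\}$ of $\hU(\g,e)$, which is not immediately visible from the spectral sequence argument (though not needed for the statement of the theorem).
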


\begin{proof}
We have $\gr Q \cong \kk[N] \otimes \kk[e+\v]$ from \eqref{e:emperp}.
Also thanks to \eqref{e:vdual}, we may identify $\kk[e+\v]$ with $S(\g^e)$,
and we make the identification $\gr Q = \kk[N] \otimes S(\g^e)$ throughout the proof.
We pick a basis $x_1,\dots,x_r$ of $\g^e$ such that $x_i \in \g(n_i)$ for $n_i \in \frac{2}{d}\Z_{\ge 0}$
and define $x^\bc = x_1^{c_1}\dots x_r^{c_r}$ for $\bc = (c_1,\dots,c_r) \in \Z_{\ge 0}^r$.
Then the Kazhdan degree of $x_i$ is $n_i+2$ and $\{x^\bc \mid \bc \in \Z_{\ge 0}^r\}$ is a basis of $S(\g^e)$.
Using \eqref{e:kNfree}
we see that $\kk[N] \otimes S(\g^e)$ is
a free $U_0(\n)$-module on
$\{T^{p\bb+\bv} \otimes x^\bc \mid \bb \in \Z_{\ge 0}^l, \bc \in \Z_{\ge 0}^r\}$.
By \eqref{e:e^vT}, we have that
$e^\bv \cdot (T^{p\bb+\bv} \otimes x^\bc) = c T^{p\bb} \otimes x^\bc$ for any $\bb \in \Z_{\ge 0}^l$, $\bc \in \Z_{\ge 0}^r$, where
$c$ is a nonzero element of $\kk$.
Thus we deduce that the $\n$-invariants in $\kk[N] \otimes S(\g^e)$ has basis
$\{T^{p\bb} \otimes x^\bc \mid \bb \in \Z_{\ge 0}^l, \bc \in \Z_{\ge 0}^r\}$, so that
$$
(\gr Q)^\n \cong (\kk[N] \otimes S(\g^e))^\n = \kk[N]^p \otimes S(\g^e).
$$

We choose a lift $u(\bb,\bc) \in Q$ of $T^{p\bb+\bv} \otimes x^\bc$,
for each $\bb \in \Z_{\ge 0}^s$, $\bc \in \Z_{\ge 0}^r$.
Now a standard filtration argument shows that $Q$ is a free $U_0(\n)$-module on
$\{u(\bb,\bc) \mid \bb \in \Z_{\ge 0}^l, \bc \in \Z_{\ge 0}^r\}$.
From this we deduce that $Q^\n$ has basis  $\{e^\bv \cdot u(\bb,\bc) \mid \bb \in \Z_{\ge 0}^l, \bc \in \Z_{\ge 0}^r\}$.

The natural map $\gr(Q^\n) \to (\gr Q)^\n$ sends
$\gr_j (e^\bv \cdot u(\bb,\bc)) \in \gr(Q^\n)$, where $j = \sum_{i=1}^l (pb_i-(p-1))m_i + \sum_{i=1}^r c_i(n_i+2)$,
to $e^\bv \cdot (T^{p\bb+\bv} \otimes x^\bc) = c T^{p\ba} \otimes x^\bc$, and thus
is an isomorphism.

Since $\hU(\g,e) = Q^\n$ and we have already seen that $(\gr Q)^\n \cong \kk[N]^p \otimes S(\g^e)$,
we have $\gr \hU(\g,e) \cong \kk[N]^p \otimes S(\g^e) = \kk[N]^p \otimes \kk[e + \v]$.
\end{proof}

\section{Independence results}\label{S:indep}

In this section we show that the isomorphism type of $U(\g,e)$ does not depend on the choice of isotropic space
$\l \sub \g(-1)$ or the choice of good grading.  This is achieved by adapting the methods from \cite{GG} and \cite{BruG}
to the modular setting.

\subsection{Independence of the isotropic space}
We fix a good cocharacter $\gamma$ for $e$ giving a good grading
$$
\Gamma_\gamma : \g = \bigoplus_{j \in \frac{2}{d}\Z} \g(j).
$$
Now consider two isotropic subspaces $\l,\l' \sub \g(-1)$ with $\l' \sub \l$.
As we have different isotropic subspaces in play we have to be more careful with notation:
we write $N_\l$, $Q_\l$ and $U(\g,e)_\l$ for these objects defined with the choice $\l$; and
$N_{\l'}$, $Q_{\l'}$ and $U(\g,e)_{\l'}$ for the corresponding objects defined with the choice $\l'$.

The projection $Q_{\l'} \onto Q_\l$ restricts to
a Kazhdan filtered algebra homomorphism
$$
U(\g,e)_{\l'} = Q_{\l'}^{N_{\l'}} \to Q_\l^{N_\l} = U(\g,e)_\l.
$$
By Theorem~\ref{T:PBWtheorem} the associated graded algebras are both isomorphic to $\kk[e + \v]$,
and from this we can conclude that the homomorphism above
is actually an isomorphism.

By considering the case $\l' = 0$, we see that $U(\g,e)_\l$ does not depend on the choice of
$\l$, so we obtain.

\begin{Proposition} \label{P:indepl}
For a fixed good grading $\Gamma_\gamma$,
the isomorphism type of the finite $W$-algebra $U(\g,e)$ does not depend
on the choice of isotropic space $\l$.
\end{Proposition}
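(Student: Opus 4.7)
The plan is to turn the brief sketch preceding the statement into a detailed argument. Since $\{0\} \sub \g(-1)$ is itself a compatible isotropic subspace, it suffices to construct, for every compatible $\l$, an isomorphism $U(\g,e)_0 \isoto U(\g,e)_\l$. More generally I would handle the nested case $\l' \sub \l$ (with both compatible) and then specialise to $\l' = 0$.

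So suppose $\l' \sub \l$. The inclusion $\m_{\l'} \sub \m_\l$ gives $I_{\l'} \sub I_\l$ and hence a natural surjection $Q_{\l'} \onto Q_\l$ of Kazhdan filtered $U(\g)$-modules. Dually, $\l^{\perp_\omega} \sub (\l')^{\perp_\omega}$ forces $\n_\l \sub \n_{\l'}$ and therefore $N_\l \sub N_{\l'}$. It follows that $N_{\l'}$-invariants are contained in $N_\l$-invariants, so the surjection above restricts to a Kazhdan filtered algebra homomorphism
$$
\varphi : U(\g,e)_{\l'} = Q_{\l'}^{N_{\l'}} \longrightarrow Q_\l^{N_\l} = U(\g,e)_\l.
$$

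The main step is to show that $\gr \varphi$ is an isomorphism. By Theorem~\ref{T:PBWtheorem} both associated graded algebras are identified with $\kk[e + \v]$ for the same $T_{[e]}$-stable complement $\v$ to $[\g,e]$. The map $\gr \varphi$ is induced by the restriction of functions $\kk[e + \m_{\l'}^\perp] \to \kk[e + \m_\l^\perp]$ coming from the inclusion $\m_\l^\perp \sub \m_{\l'}^\perp$. Under the identifications provided by Lemma~\ref{L:isomorphismlemma}, both invariant rings are realised as $\kk[e+\v]$ via restriction along the chain $e + \v \sub e + \m_\l^\perp \sub e + \m_{\l'}^\perp$. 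Since a function on $e+\v$ is first extended invariantly to $e + \m_{\l'}^\perp$ and then restricted back to $e + \v$, the induced map on $\kk[e+\v]$ is the identity. A standard filtered argument then upgrades $\gr \varphi$ being an isomorphism to $\varphi$ itself being one. Specialising to $\l' = 0$ completes the proof.

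The only delicate point, and hence the main obstacle such as it is, is to verify that $\gr \varphi$ really is the identity on $\kk[e+\v]$ rather than merely some unspecified isomorphism; once Theorem~\ref{T:PBWtheorem} and Lemma~\ref{L:isomorphismlemma} are in hand this identification is routine, so there is no genuinely hard content.
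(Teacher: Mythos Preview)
Your proposal is correct and follows essentially the same approach as the paper: reduce to the nested case $\l' \sub \l$, use the natural projection $Q_{\l'} \onto Q_\l$ to obtain a filtered algebra map $U(\g,e)_{\l'} \to U(\g,e)_\l$, and invoke Theorem~\ref{T:PBWtheorem} to see that it is an isomorphism on associated gradeds, then specialise to $\l' = 0$. Your write-up is in fact more detailed than the paper's, since you explicitly trace $\gr\varphi$ through the restriction maps $\kk[e+\m_{\l'}^\perp] \to \kk[e+\m_\l^\perp] \to \kk[e+\v]$ to see that it is the identity on $\kk[e+\v]$, whereas the paper simply asserts that the conclusion follows once both gradeds are identified with $\kk[e+\v]$.
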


\subsection{Independence of the good grading}
We proceed to deduce from Proposition~\ref{P:indepl} that the finite $W$-algebra
$U(\g,e)$ does not depend on the choice of good grading for $e$.  We use the
notation from Section~\ref{S:goodgrading}, and adapt the setup from \cite{BruG}.

Let $\gamma, \gamma' \in X_*(T_{[e]}) \setminus X_*(T_e)$ be good cocharacters for $e$.  Up to equivalence by $\sim$ as
defined in Section~\ref{S:goodgrading}, we may assume that
$\deg_\gamma(e) = 2c = \deg_{\gamma'}(e)$, where
$c \in \Z_{>0}$.
We write
$$
\Gamma_\gamma : \g = \bigoplus_{j \in \frac{1}{c}\Z} \g(j) \quad \text{and} \quad \Gamma_{\gamma'} : \g = \bigoplus_{j \in \frac{1}{c}\Z} \g'(j)
$$
for the corresponding good gradings of $\g$.

We say that $\gamma$ and $\gamma'$ are {\em adjacent} if
$$
\g = \bigoplus_{i^- \le j \le i^+} \g(i) \cap \g'(j).
$$
Here we are summing over $i,j \in \frac{1}{c}\Z$ satisfying $i^- \le j \le i^+$,
where $i^-$ denotes the largest integer strictly smaller than $i$ and $i^+$
denotes the smallest integer strictly larger than $i$.

The definition of adjacency is set up so that if $\gamma$ and $\gamma'$ are adjacent,
then there exist Lagrangian subspaces
$\l \sub \g(-1)$ and $\l' \sub \g'(-1)$ (compatible with $T$ and $\chi$) such that
$$
\l \oplus \bigoplus_{j < -1} \g(j) = \l' \oplus \bigoplus_{j < -1} \g'(j);
$$
this can be proved in exactly the same way as \cite[Lemma~26]{BruG}.
For this choice of $\l$ and $\l'$, the finite $W$-algebra $U(\g,e)$ defined from the good grading
$\Gamma_\gamma$ and the Lagrangian subspace $\l$ is actually {\em equal} to the finite $W$-algebra
$U(\g,e)'$ defined from the good grading $\Gamma_{\gamma'}$ and the Lagrangian subspace $\l'$.

For $\alpha \in \Phi^e$ and $k \in \Z$ we define the affine hyperplane
$$
H_{\alpha,k} := \{\theta \in E_e \mid \langle \alpha,\theta \rangle = k\} \sub E_e^\R := \R \otimes_\Z X_*(T_e).
$$
These hyperplanes slice $E_e^\R$ in to alcoves, i.e.\ the alcoves
are the connected components of the complement in $E_e^\R$ of the union of
all $H_{\alpha,k}$.  Further, we see that the good grading polytope $\cP_e^\Q$ from
\eqref{e:polytope} is sliced into finitely many alcoves, and that we can get between
any two points in $\cP_e^\Q$ by passing through finitely many walls.

According to the classification of good gradings in Section~\ref{S:goodgrading},
for each pair $\frac{1}{c} \otimes \delta, \frac{1}{c} \otimes \delta' \in \cP_e^\Q$
there are good cocharacters $c\lambda - \delta, c\lambda - \delta' \in X_*(T_{[e]})$ for $e$,
and corresponding $W$-algebras $U(\g,e)$ and $U(\g,e)'$.
As explained in \cite[Section~5]{BruG}, we have that $c\lambda - \delta$ and $c\lambda - \delta'$
are adjacent if and only if $\frac{1}{c} \otimes \delta$ and $\frac{1}{c} \otimes \delta'$ lie in the closure of the same alcove.
If this is the case, we can choose Lagrangian subspaces in such a way that $U(\g,e) = U(\g,e)'$, as explained above.

Since the defining equations of the walls (\ref{e:polytope}) have integral coefficients it follows that if the closures of two alcoves
of $E_e^\R$ have non-empty intersection then this intersection contains a point of $\cP_e^\Q$.
Hence, between every two points in $\cP_e^\Q$ there is a path in $E_e^\R$ which passes through finitely many
walls and the point in each wall that it passes through is an element of $\cP_e^\Q$.
Thus by successively applying Proposition~\ref{P:indepl}, we obtain the following independence theorem.

\begin{Theorem} \label{T:gradingindep}
Let $\gamma, \gamma' \in X_*(T^{[e]})$ be good cocharacters for $e$, and let $U(\g,e)$ and
$U(\g,e)'$ be the finite $W$-algebras defined from the corresponding good gradings.  Then we have
$U(\g,e) \cong U(\g,e)'$.
\end{Theorem}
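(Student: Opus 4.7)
The plan is to reduce the general statement to the case of adjacent good cocharacters via an alcove-walking argument inside the good grading polytope $\cP_e^\Q$, and then combine this with Proposition~\ref{P:indepl}. All the geometric and algebraic preparation has been assembled before the theorem statement, so the proof is essentially a chaining argument.

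First I would bring $\gamma$ and $\gamma'$ to a common footing. Since the grading $\Gamma_\gamma$ depends on $\gamma$ only up to the equivalence relation $\sim$ (which identifies $\gamma$ with its nonzero integer multiples), I may replace each cocharacter by a positive integer multiple so that $\deg_\gamma(e) = 2c = \deg_{\gamma'}(e)$ for a single $c \in \Z_{>0}$. Then, following the parametrization from Section~\ref{S:goodgrading}, $\gamma$ and $\gamma'$ correspond respectively to rational points $\theta = \tfrac{1}{c} \otimes \delta$ and $\theta' = \tfrac{1}{c} \otimes \delta'$ lying in the good grading polytope $\cP_e^\Q \subseteq E_e^\Q$.

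Next I would invoke the alcove structure on $E_e^\R$ cut out by the hyperplanes $H_{\alpha,k}$. Because each $H_{\alpha,k}$ is defined by an equation with integer coefficients, whenever the closures of two alcoves meet, their intersection contains rational points; so between $\theta$ and $\theta'$ one can draw a piecewise linear path inside $\cP_e^\Q$ crossing only finitely many walls, always entering the next alcove through a rational point. This yields a finite sequence $\theta = \theta_0, \theta_1, \dots, \theta_n = \theta'$ in $\cP_e^\Q$, with consecutive points lying in the closure of a common alcove. Writing $\theta_i = \tfrac{1}{c} \otimes \delta_i$, the cocharacters $\gamma_i := c\lambda - \delta_i$ are all good cocharacters for $e$, and each consecutive pair $\gamma_i, \gamma_{i+1}$ is adjacent in the sense defined before the theorem.

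Then I would fuse the adjacent pairs. For each $i$, the analogue of \cite[Lemma~26]{BruG} recalled above produces Lagrangian subspaces $\l_i \subseteq \g(-1;\gamma_i,e)$ and $\l_{i+1}' \subseteq \g(-1;\gamma_{i+1},e)$ compatible with $T$ and $\chi$, such that the resulting nilpotent subalgebras $\m$ coincide, and hence the $W$-algebras built from $(\gamma_i,\l_i)$ and from $(\gamma_{i+1},\l_{i+1}')$ are literally equal inside $Q$. Proposition~\ref{P:indepl} then allows me, for the fixed grading $\Gamma_{\gamma_{i+1}}$, to replace $\l_{i+1}'$ by whatever Lagrangian $\l_{i+1}$ I chose at the next step, at the price of an isomorphism. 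Chaining these identifications and isomorphisms produces
\[
U(\g,e) \;\cong\; U(\g,e)_{\gamma_1} \;\cong\; \cdots \;\cong\; U(\g,e)_{\gamma_{n-1}} \;\cong\; U(\g,e)',
\]
proving the theorem. The main obstacle, the existence of a finite alcove walk through rational points and the compatible Lagrangian construction at each step, has already been addressed in the preceding discussion; the remaining work is purely organizational.
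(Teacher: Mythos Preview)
Your proposal is correct and follows essentially the same approach as the paper: normalize the cocharacters, pass to the rational points in the good grading polytope, walk through finitely many alcoves via rational wall-crossings, and at each step use the adjacent-Lagrangian equality together with Proposition~\ref{P:indepl} to chain isomorphisms. The paper's proof is precisely this argument, stated slightly more tersely.
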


\section*{Notation}

For ease of reference we recap and introduce the notation that we use in the sequel; this
is in addition the basic notation given in \S\ref{ss:notation}, \S\ref{ss:redgrps} and \S\ref{ss:nilporbits}.

Given Theorem~\ref{T:gradingindep}, we choose to restrict to an integral good grading
$\Gamma_\gamma : \g = \bigoplus_{j \in \Z} \g(j)$
from now, and thus we make
a restriction on the cocharacter $\gamma \in X_*(T_{[e]})$ below.
We also just work with the case where $\l$ is a Lagrangian subspace of $\g(-1)$, in which
case $\m$ and $\n$ coincide, and so now we write $M = N$ for the algebraic group $M \subseteq G$ with $\m = \Lie(M)$.
In case $p =2$, we actually assume that the good grading is even: this is justified
because $2$ is a good prime for $G$ only if all simple components of $G$ are of type $\mathrm A$, and so it follows
from \cite[Section~4]{EK} (see also \cite[Section~6]{BruG}) that there is always an even good grading; here
we also require the
fact that the parametrization of good gradings is the same
as in characteristic 0 as explained in Section~\ref{S:goodgrading}.

\begin{itemize}
\itemsep5pt

\item $e \in \g$ is a nilpotent element compatible with $T$ and $\chi := (e, \cdot) \in \g^*$.

\item
$d_\chi := \frac{1}{2} \dim G \cdot \chi = \frac{1}{2}\dim G \cdot e$.

\item $\lambda$ is the associated cocharacter for $e$ in $X_*(T_{[e]})$.

\item $\Phi^e \sub X^*(T_e)$ is the restricted root system and we pick a set
of positive roots $\Phi^e_+ \sub \Phi^e$.

\item We fix $\gamma \in X_*(T)$ to be a good cocharacter for $e$ with $\deg_\gamma(e) = 2$, and write
$\Gamma_\gamma : \g = \bigoplus_{j \in \Z} \g(j)$ for the (integral) good grading of $\g$ determined by $\gamma$. \\
If $p = 2$, then we pick $\gamma$ so that $\Gamma_\gamma$ is an even good grading for $e$.

\item Incorporating the restricted root decomposition we obtain
$$
\g = \bigoplus_{j \in \Z} \g_0(j) \oplus \bigoplus_{\substack{\alpha \in \Phi^e \\ j \in \Z}} \g_\alpha(j).
$$

\item Let $\l = \bigoplus_{\alpha \in \Phi^e_+} \g_\alpha(-1)$, which is a Lagrangian
subspace of $\g(-1)$. \\
Also let $\l'= \bigoplus_{\alpha \in \Phi^e_-} \g_\alpha(-1)$, so that $\g(-1) = \l \oplus \l'$.

\item $\m := \l \oplus \bigoplus_{j < -1} \g(j)$, which is a nilpotent subalgebra of $\g$.

\item  $\bfp := \l' \oplus \bigoplus_{j \geq 0} \g(j)$, which is just a subspace of $\g$.

\item Recall that $\v$ is a $T_{[e]}$-stable complement of $[\g,e]$ in $\g$ and
that $\v$ is dual to $\g^e$ via $(\cdot\,,\cdot)$, see \eqref{e:vdual}.

\item
We fix a basis $x_1,...,x_r,x_{r+1},...,x_m$ for $\bfp$ such that $x_1,...,x_r$ spans $\g^e$.  \\
We choose this so that $x_i \in \g_{\alpha_i}(n_i)$, where $n_i \in \Z_{\ge -1}$ and
$\alpha_i \in \Phi^e \cup \{0\}$. \\
We may and do choose $x_{r+1},...,x_m$ to be orthogonal to $\v$, and write $\bfa$ for the vector
space spanned by $x_{r+1},...,x_m$.

\item $\m_\chi := \{x - \chi(x) \mid x \in \m\}$, $I := U(\g)\m_\chi$ and $Q := U(\g)/I$.

\item $M$ is the unipotent subgroup of $G$ with Lie algebra $\m$.

\item $U(\g,e) := Q^M = \{u + I \in Q \mid g \cdot u +I = u+I \text{ for all } g \in M\}$.

\item The Kazhdan filtration of $U(\g)$ is defined by setting $F_j U(\g)$, for $j \in \Z$ to be spanned by monomials
$y_1 \cdots y_k$ with $y_i \in \g(m_i)$ and
$$
\sum_{i=1}^k (m_i + 2) \leq j.
$$
This induces non-negative filtrations $(F_j Q)_{j \in \Z_{\ge 0}}$ and $(F_j U(\g,e))_{j \in \Z_{\ge 0}}$
on $Q$ and $U(\g,e)$ respectively.

\item For $\ba = (a_1,...,a_m) \in \Z_{\geq 0}^m$  we write
$$
x^\ba := x_1^{a_1} \cdots x_m^{a_m},
$$
which we view both as an element of $U(\g)$ and of $S(\g)$.  \\
Then $\{x^\ba + I \mid \ba \in \Z_{\geq 0}^m\}$ is a basis of $Q$, and
$\{x^\ba  \mid \ba \in  \Z_{\geq 0}^m\}$ is a basis of $S(\bfp)$. \\
Define $|\ba| = \sum_{i=1}^m a_i$ and $|\ba|_e = \sum_{i=1}^m (n_i+2)a_i$ to be the total degree
and the Kazhdan degree of $x^\ba$.
\end{itemize}

\section{The PBW generators} \label{S:applicationsofPBW}

We interpret Theorem~\ref{T:PBWtheorem} (the PBW theorem for $U(\g,e)$)  more explicitly
to give some good choices of PBW generators.  We roughly
follow the approach in \cite[\S3.2]{BGK} and explain
how to adapt the methods from there; which in turn is
based on work of Premet in \cite{PrST} and \cite{PrJI}.
We also describe a PBW basis of $\hU(\g,e)$ and use this
to clarify the relationship between $U(\g,e)$ and $\hU(\g,e)$.

Recall that the associated graded algebra of $Q$ is $\gr Q = \gr S(\g) /\gr(I)$, and note that
by the PBW theorem we have $S(\g) = S(\bfp) \oplus \gr I$.  We let $\pr : S(\g) \to S(\bfp)$
be the projection along this direct sum decomposition.  This restricts to an isomorphism
between $\gr Q$ and $S(\bfp)$.  As explained in \S\ref{ss:gelfandwhittaker} the adjoint action of $M$ on $Q$
descends to an adjoint action on $\gr Q$ and in turn this gives a twisted action of $M$ on
$S(\bfp)$ defined by
$$
\tw(g) \cdot f = \pr(g \cdot f),
$$
for $g \in M$ and $f \in S(\bfp)$, where $g \cdot f$ denotes the usual adjoint action of $g$ on $f$ in $S(\g)$.
We write $S(\bfp)^{\tw(M)}$ for the invariants with respect to this action.  Thanks to
Theorem~\ref{T:PBWtheorem} the invariant algebra $S(\bfp)^{\tw(M)}$ is isomorphic to $\gr U(\g,e)$.

Recall that $x_1,...,x_r, x_{r+1},...,x_m$ is a basis of $\bfp = \l' \oplus \bigoplus_{j \geq 0} \g(j)$.
We have that $x_{r+1},...,x_m$ spans a complement to $\g^e$ inside
$\bfp$, which we denote by $\bfa$; further, $\bfa$ is orthogonal to $\v$.
So we have a direct sum decomposition $\bfp = \bfa \oplus \g^e$ and also a decomposition
$S(\bfp) = S(\g^e) \oplus \bfa S(\bfp)$
which induces a projection
\begin{equation} \label{e:zeta}
\zeta : S(\bfp) \onto S(\g^e).
\end{equation}
Recall that the form $(\cdot\,,\cdot)$ on $\g$ induces an isomorphism $\g \cong \g^*$. In turn, this induces a homomorphism
\begin{equation}\label{e:identifyalgs}
S(\bfp) \hookrightarrow S(\g) = \kk[\g^*] \cong \kk[\g] \twoheadrightarrow \kk[e + \m^\perp].
\end{equation}
We have $\m^\perp = [e,\l] \oplus \bigoplus_{i< 1} \g(i)$ and so it follows that the above composition is
an isomorphism.  In what follows we usually identify $S(\bfp)$ with $\kk[e + \m^\perp]$. The twisted adjoint action of $N$
on $S(\bfp)$ is defined
precisely so that (\ref{e:identifyalgs}) is an isomorphism of $M$-modules.
Then using the fact that $(\cdot\,,\cdot )$ restricts to a non-degenerate pairing $\g^e \times \v \rightarrow \kk$,
we may identify $S(\g^e)$ with $\kk[e + \v]$.

By construction the kernel of the restriction
$\kk[e+ \m^\perp] \twoheadrightarrow \kk[e+ \v]$ identifies with the ideal of $S(\bfp)$ generated by $\bfa$ and
it follows that the projection $\zeta$ from \eqref{e:zeta} identifies with this restriction map.
Thanks to Lemma~\ref{L:isomorphismlemma}, the map $\zeta: \kk[e+\m^\perp] \to \kk[e+\v]$
gives an isomorphism $\kk[e+\m^\perp]^N \isoto \kk[e+\v]$.  Now we can deduce the following reformulation of the
PBW theorem, which is an analogue of \cite[Lemma~3.1]{BGK}.

\begin{Lemma} \label{L:symmetricequivariance}
We have that $\gr U(\g,e) \cong S(\bfp)^{\tw(M)}$.  Moreover, the restriction of $\zeta$
is an isomorphism of graded algebras
$$
\zeta : S(\bfp)^{\tw(M)} \to S(\g^e).
$$
\end{Lemma}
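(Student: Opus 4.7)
The plan is to deduce both assertions from Theorem~\ref{T:PBWtheorem} together with the geometric description of $e+\m^\perp$ supplied by Lemma~\ref{L:isomorphismlemma}.

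First I would establish the isomorphism $\gr U(\g,e) \cong S(\bfp)^{\tw(M)}$. The PBW theorem for $U(\g)$ gives the direct sum decomposition $S(\g) = S(\bfp) \oplus \gr I$, so the projection $\pr : S(\g) \to S(\bfp)$ restricts to a graded vector space isomorphism $\gr Q \isoto S(\bfp)$. By the very definition of $\tw$, the twisted action of $M$ on $S(\bfp)$ is the transport through $\pr$ of the adjoint $M$-action on $\gr Q$, so $\pr$ is $M$-equivariant. Combining this with Theorem~\ref{T:PBWtheorem}, which says that the natural map $\gr(Q^M) \to (\gr Q)^M$ is an isomorphism, we obtain
\[
\gr U(\g,e) \;=\; \gr(Q^M) \;\cong\; (\gr Q)^M \;\cong\; S(\bfp)^{\tw(M)}.
\]

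For the second statement, I would work through the identification $S(\bfp) \cong \kk[e+\m^\perp]$ furnished by \eqref{e:identifyalgs}. By construction this identification is $M$-equivariant for the twisted action on the left hand side and the adjoint action (via translation of $e+\m^\perp$) on the right hand side. Under the identification $S(\g^e) \cong \kk[e+\v]$ coming from the nondegenerate pairing \eqref{e:vdual}, the projection $\zeta$ becomes the restriction map $\kk[e+\m^\perp] \to \kk[e+\v]$, since both are the quotient by the ideal generated by $\bfa$. Now invoking Lemma~\ref{L:isomorphismlemma}, we have the $M$-equivariant isomorphism
\[
\kk[e+\m^\perp] \;\cong\; \kk[M \times (e+\v)] \;\cong\; \kk[M]\otimes \kk[e+\v],
\]
in which $M$ acts by left translation on the first factor and trivially on the second, and under which the restriction map to $\kk[e+\v]$ corresponds to evaluation at the identity of $M$. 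Taking $M$-invariants, and using that $\kk[M]^M = \kk$ as in \eqref{e:NcohomkN}, this evaluation map induces an isomorphism $\kk[e+\m^\perp]^M \isoto \kk[e+\v]$. Transporting this along our identifications yields the desired isomorphism $\zeta : S(\bfp)^{\tw(M)} \isoto S(\g^e)$ of graded algebras.

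The main step requiring care is the verification that the identifications \eqref{e:identifyalgs} and $S(\g^e) \cong \kk[e+\v]$ intertwine $\zeta$ with the geometric restriction map $\kk[e+\m^\perp] \to \kk[e+\v]$; once this is in place, Lemma~\ref{L:isomorphismlemma} does all the real work. This is bookkeeping rather than a genuine obstacle, since $\bfa$ is chosen orthogonal to $\v$ and $\g^e$ is dual to $\v$, so the two descriptions of $\zeta$ agree on generators.
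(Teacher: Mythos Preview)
Your proposal is correct and follows essentially the same route as the paper. The paper deduces the lemma from the discussion immediately preceding it: the first part from Theorem~\ref{T:PBWtheorem} via the $M$-equivariant identification $\gr Q \cong S(\bfp)$, and the second by identifying $\zeta$ with the restriction map $\kk[e+\m^\perp] \to \kk[e+\v]$ and then invoking Lemma~\ref{L:isomorphismlemma}; your argument unpacks the latter step by passing through $\kk[M]\otimes\kk[e+\v]$ and using $\kk[M]^M=\kk$, which is exactly how this is obtained in the proof of Theorem~\ref{T:PBWtheorem}.
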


The preceding observations can be presented in the following commutative diagram, where
all the maps are explained above:
\begin{center}
\begin{tikzpicture}[node distance=2.3cm, auto]
\pgfmathsetmacro{\shift}{0.3ex}
\node (A) {$S(\bfp)^{\tw(M)}$};

\node (H) [right of= A] {$ $};
\node (B) [right of= H] {$S(\bfp)$};
\node (C) [below of= B]{$S(\g^e)$};

\node (G) [right of= B] {$ $};

\node (D) [right of= G]{$\kk[e + \m^\perp]$};
\node (E) [below of= D]{$\kk[e+\v]$};
\node (I) [right of= D]{$ $};
\node (F) [right of= I]{$\kk[e+\m^\perp]^M$};

\draw[right hook->] (A) --(B) node[above,midway] {$ $};
\draw[->] (A) --(C) node[right,midway] {$\sim$};
\draw[->>] (B) --(C) node[above,midway] {$ $};
\draw[left hook->] (F) --(D) node[above,midway] {$ $};
\draw[->] (F) --(E) node[left,midway] {$\sim$};
\draw[->>] (D) --(E) node[above,midway] {$ $};

\draw[->] (B) --(D) node[above,midway] {$\sim$};
\draw[->] (C) --(E) node[above,midway] {$\sim$};

\end{tikzpicture}
\end{center}

We can also rephrase the PBW theorem for $\hU(\g,e)$ in similar terms.
The twisted action of $\m$ on $S(\bfp)$ is defined by
$$
\tw(x) \cdot f = \pr(x \cdot f),
$$
for $x \in \m$ and $f \in S(\bfp)$, where $x \cdot f$ is the usual adjoint action of $x$ on $f$ in $S(\g)$.

\begin{Lemma} \label{L:hcWdegrees}
We have that $\gr \hU(\g,e) \cong S(\bfp)^{\tw(\m)}$.  Moreover, as
a Kazhdan graded algebra it is a polynomial algebra on $\dim \bfp$ generators of degrees
$$
n_1,\dots,n_r,pn_{r+1},\dots,pn_m
$$
\end{Lemma}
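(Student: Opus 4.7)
The plan is to parallel Lemma~\ref{L:symmetricequivariance} for the extended setting and then read off the polynomial structure and degrees from the explicit form of $(\gr Q)^\m$ established in Theorem~\ref{T:extendedPBWtheorem}.

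For the first assertion, $\gr \hU(\g,e) \cong S(\bfp)^{\tw(\m)}$, I would argue exactly as in the proof of Lemma~\ref{L:symmetricequivariance}, but replacing the $M$-action by the $\m$-action. The natural map $\gr(Q^\m) \to (\gr Q)^\m$ is an isomorphism by Theorem~\ref{T:extendedPBWtheorem}. Via the PBW decomposition $S(\g) = S(\bfp) \oplus \gr I$, the projection $\pr$ identifies $\gr Q$ with $S(\bfp)$. The adjoint action of $\m$ on $\gr Q$ corresponds, by the very definition $\tw(x)\cdot f = \pr(x\cdot f)$, to the twisted action of $\m$ on $S(\bfp)$. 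Taking $\m$-invariants on both sides yields the desired isomorphism.

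For the polynomial algebra structure and the degrees, I would exploit the explicit identification $(\gr Q)^\m \cong \kk[N]^p \otimes \kk[e+\v]$ obtained in the proof of Theorem~\ref{T:extendedPBWtheorem}. The factor $\kk[e+\v] \cong S(\g^e)$ is visibly a graded polynomial algebra on the $r = \dim \g^e$ Kazhdan-homogeneous generators $x_1,\dots,x_r$ of Kazhdan degrees $n_1+2,\dots,n_r+2$. The factor $\kk[N]^p$ is a polynomial algebra on the $l = \dim \m = \dim N$ generators $T_\alpha^p$ indexed by $\alpha \in \Phi(\m)$. Using \eqref{e:kNgrading} and the identity $\gamma(t) u_\alpha(s)\gamma(t)^{-1} = u_\alpha(t^{m_\alpha}s)$ for $e_\alpha \in \g(m_\alpha)$ (with $m_\alpha \le -1$), a direct computation gives that $T_\alpha$ has Kazhdan degree $-m_\alpha$, and hence $T_\alpha^p$ has Kazhdan degree $-pm_\alpha \ge p$. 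The total generator count is $r + l = \dim \g^e + \dim \m$, which equals $\dim \bfp$ by the dimension identity $\dim \bfp = \tfrac{1}{2}(\dim \g + \dim \g^e)$ together with \eqref{e:dimm+dimn}.

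To match the Kazhdan degrees of the $T_\alpha^p$ with those prescribed by the basis $x_{r+1},\dots,x_m$ of $\bfa$, I would set up a weight-preserving correspondence between $\bfa$ and $\m$ via the assignment $j \leftrightarrow -(j+2)$ on $\gamma$-weights. This matches dimensions weight-by-weight: for $j=-1$ both sides have dimension $\tfrac{1}{2}\dim\g(-1)$ (since $\l \subset \g(-1)$ and $\l' \subset \bfa$ are Lagrangian), and for $j \ge 0$ one computes $\dim \bfa(j) = \dim \g(j) - \dim \g^e(j) = \dim\g(j+2) = \dim\g(-(j+2)) = \dim \m(-(j+2))$, using the good grading surjectivity of $\ad e$ (rephrased through \eqref{e:dimg^e}) and the non-degeneracy of $(\cdot\,,\cdot)$ on $\g(k)\times\g(-k)$. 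After reindexing the $T_\alpha$'s accordingly, the Kazhdan degree of the $p$-th power corresponding to $x_j$ ($j > r$) is precisely $p(n_j+2)$. The main obstacle here is purely bookkeeping: carefully verifying the weight correspondence and the identification of the generators of $\kk[N]^p$ with the $x_j \in \bfa$ under the Kazhdan-graded isomorphism $S(\bfp) \cong \kk[N] \otimes \kk[e+\v]$ of \eqref{e:emperp}; once this is in place, both the polynomial structure and the list of degrees follow immediately.
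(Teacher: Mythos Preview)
Your proposal is correct and follows the same overall strategy as the paper: invoke Theorem~\ref{T:extendedPBWtheorem} for the first assertion, then use the identification $(\gr Q)^\m \cong \kk[N]^p \otimes \kk[e+\v]$ to read off the polynomial structure and degrees. The only substantive difference lies in how you match the degrees of the generators of $\kk[N]^p$ with the list $n_{r+1},\dots,n_m$ coming from $\bfa$. You compute the Kazhdan degrees of the $T_\alpha$ directly from \eqref{e:kNgrading} and then set up a separate weight-by-weight dimension correspondence $\bfa(j) \leftrightarrow \m(-(j+2))$ to verify that the multisets of degrees agree. The paper bypasses this computation entirely by quoting the graded isomorphism $\kk[N] \otimes \kk[e+\v] \cong S(\bfp) = S(\g^e) \otimes S(\bfa)$ from \eqref{e:emperp}: since $\kk[e+\v] = S(\g^e)$ accounts for the first $r$ generator degrees of $S(\bfp)$, the factor $\kk[N]$ must account for the remaining $m-r$, and raising to $p$th powers multiplies those degrees by $p$. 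Your route is more explicit and self-contained, while the paper's is shorter because it reuses an identification already in hand. (Incidentally, your degrees $n_i+2$ and $p(n_j+2)$ are the ones consistent with the Kazhdan convention stated in the Notation section; the lemma as stated drops the $+2$.)
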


\begin{proof}
The first statement follows from
Theorem~\ref{T:extendedPBWtheorem}.
Further, we deduce that $\gr \hU(\g,e) \cong \kk[M]^p \otimes \kk[e+\v]$ as graded algebras.
From the identification $\kk[e+\v] = S(\g^e)$, we see that the degrees of the generators of $\kk[e+\v]$ are
$n_1,\dots,n_r$.  Thus through the identification $\kk[M] \otimes \kk[e+\v] = S(\bfp) = S(\g^e) \otimes S(\bfa)$,
we get that the degrees of the generators of $\kk[M]$ are $n_{r+1},\dots,n_m$.  Now the result
follows.
\end{proof}

From now on we identify $\gr U(\g,e) = S(\bfp)^{\tw(M)}$, so we view $\gr_j u$ as an element
of $S(\bfp)^{\tw(M)}$ for $u \in F_j U(\g,e)$.  By Lemma~\ref{L:symmetricequivariance} there exist (non-unique) elements
$\Theta(x_i) \in F_{n_i + 2} U(\g,e)$
satisfying
\begin{equation} \label{e:choosingTheta}
\zeta(\gr_{n_i+2} \Theta(x_i)) = x_i \in S(\g^e).
\end{equation}
Moreover, we can choose $\Theta(x_i)$ to be a $T_e$-weight vector with weight $\alpha_i \in \Phi^e$, as
the isomorphism $\zeta$ is $T_e$-equivariant; in particular, $\Theta(x_i)$ is an eigenvector for $\sigma := \gamma(-1)$.
We can extend $\Theta$ linearly to a get map $\Theta : \g^e \to U(\g,e)$, which is $T_e$-equivariant.
Moreover, $\Theta$ satisfies the properties of the following theorem.

\begin{Theorem}\label{T:PBWbasisthm}
$ $
\begin{enumerate}
\item[(i)] The set $\{\Theta(x_i) \mid i = 1,...,r\}$ generates $U(\g,e)$ and the PBW
monomials $$\{ \Theta^\bb := \Theta(x_1)^{b_1} \cdots \Theta(x_r)^{b_r} \mid \bb \in \Z_{\ge 0}^r\}$$
form a basis of $U(\g,e)$.
\item[(ii)] We have
$$
\Theta(x_i)
=  x_i + \sum_{|\ba|_e \le n_i + 2} \lambda_{\ba,i} x^\ba
$$
where $\lambda_\ba \in \kk$ satisfy:
\begin{itemize}
\item $\lambda_\ba = 0$ whenever $|\ba|_e = n_i+2$ and $|\ba| = 1$; and
\item $\lambda_\ba = 0$ whenever $|\ba|_e$ has different parity to $n_i$.
\end{itemize}
\end{enumerate}
\end{Theorem}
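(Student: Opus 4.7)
The plan is to deduce (i) from a standard filtered PBW argument, and to prove (ii) in three steps: establishing the Kazhdan degree bound on the exponents $\ba$, enforcing the parity condition by symmetrisation, and finally showing the vanishing of the top-Kazhdan-degree linear cross terms. This last step is where the main content sits, and I expect it to be the hardest part.

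For part (i), Lemma~\ref{L:symmetricequivariance} identifies $\gr U(\g,e)$ with the polynomial algebra $S(\g^e) = \kk[x_1,\ldots,x_r]$ as Kazhdan-graded algebras, where $x_i$ has Kazhdan degree $n_i+2$. By the defining property \eqref{e:choosingTheta} of $\Theta$, the class of $\Theta(x_i)$ in $\gr_{n_i+2} U(\g,e)$ corresponds to $x_i$ under this identification. Hence the images of the PBW monomials $\Theta^\bb$ form the basis $\{x^\bb\}$ of $\gr U(\g,e)$, and a standard induction on Kazhdan degree lifts this to show that the $\Theta^\bb$ form a basis of $U(\g,e)$ itself, with the $\Theta(x_i)$ generating the algebra.

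For part (ii), expand $\Theta(x_i) = x_i + \sum \lambda_{\ba,i} x^\ba$ in the PBW basis of $Q$, where the sum ranges over $\ba \neq e_i$ with $|\ba|_e \leq n_i+2$; the bound on $|\ba|_e$ is immediate from $\Theta(x_i) \in F_{n_i+2} U(\g,e)$. For the parity condition, I use the automorphism $\sigma := \gamma(-1) \in T$: it normalises $M$ and fixes $\chi$ because $e \in \g(2)$ is $\sigma$-fixed, hence descends to an automorphism of $U(\g,e)$ that acts on the monomial $x^\ba$ by $(-1)^{|\ba|_e}$. When $p \neq 2$, I replace $\Theta(x_i)$ by $\tfrac{1}{2}\bigl(\Theta(x_i) + (-1)^{n_i}\sigma\cdot\Theta(x_i)\bigr)$: the leading graded symbol $X_i := \gr_{n_i+2}\Theta(x_i)$ is preserved (since it is a $\sigma$-eigenvector of eigenvalue $(-1)^{n_i}$), while all monomials of the wrong parity are killed. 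When $p = 2$, the good grading is even by hypothesis (see the Notation section), so every $n_j$ is even, $|\ba|_e$ is automatically even, and the parity condition holds trivially.

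The main obstacle is the first bullet: $\lambda_{e_j, i} = 0$ for all $j \neq i$ with $n_j = n_i$. This is a statement about the uniquely determined element $X_i \in S(\bfp)^{\tw(M)}_{n_i+2}$ characterised by $\zeta(X_i) = x_i$. For $j \leq r$ with $j \neq i$ and $n_j = n_i$, the vanishing is automatic because $\zeta$ restricts to the identity on $S(\g^e)$: a nonzero coefficient of $x_j$ would contribute an unwanted $x_j$ to $\zeta(X_i)$. The substantive case is $j > r$ with $n_j = n_i$, where $x_j \in \bfa \subset \ker\zeta$. Here I will pass through the isomorphism $S(\bfp) \cong \kk[e+\m^\perp]$ of \eqref{e:identifyalgs}: by Lemma~\ref{L:isomorphismlemma}, $X_i$ corresponds to the unique $M$-invariant function on $e+\m^\perp$ whose restriction to $e+\v$ is the pairing $(x_i, \cdot)$. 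To compute its linear part in $y \in \m^\perp$ at the basepoint $e$, I differentiate the parametrisation $(g, v) \mapsto g \cdot (e+v)$ at $(1, e)$ to obtain the linear isomorphism $(m, v) \mapsto [m, e] + v$ from $\m \oplus \v$ onto $\m^\perp$. Writing $y = [m, e] + y_\v$ accordingly, the linear part of $X_i$ at $e$ is $y \mapsto (x_i, y_\v)$; the invariance of the bilinear form together with $[x_i, e] = 0$ yields $(x_i, [m,e]) = -([x_i,e], m) = 0$, whence $(x_i, y_\v) = (x_i, y)$. Thus the linear functional is represented by $x_i \in \g^e \subset \bfp$ itself, and since $\bfp = \g^e \oplus \bfa$ with $x_i \in \g^e$, there is no $\bfa$-contribution. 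This gives $\lambda_{e_j,i} = 0$ for all $j > r$ with $n_j = n_i$, completing the proof.
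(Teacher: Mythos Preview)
Your proof is correct. Part (i) and the parity condition match the paper's treatment (the paper simply chooses $\Theta(x_i)$ as a $\sigma$-eigenvector from the outset rather than symmetrising, but this is cosmetic). Where you genuinely diverge is the first bullet.

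The paper's argument is Lie-algebraic: writing $\hat x_i = \gr_{n_i+2}\Theta(x_i) = y + u$ with $y \in \g(n_i)$ the polynomial-degree-one part and $u \in \bigoplus_{n>1} S^n(\bfp)$, it supposes $y \notin \g^e$, picks $z \in \g(-n_i-2) \subset \m$ with $\chi([z,y]) \ne 0$, and computes $\tw(z)\cdot\hat x_i = \chi([z,y]) + \pr([z,u])$; a direct check on monomials forces $\pr([z,u])=0$, contradicting $\tw(\m)$-invariance of $\hat x_i$. Your route is geometric: you realise $X_i$ as the unique $M$-invariant function on $e+\m^\perp$ extending $(x_i,\cdot)|_{e+\v}$, linearise the orbit map of Lemma~\ref{L:isomorphismlemma} at $e$, and use $[x_i,e]=0$ to show the differential is $(x_i,\cdot)$ on all of $\m^\perp$. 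This is valid because $(e,\bfp)=0$ (since $\bfp \subset \bigoplus_{j\ge -1}\g(j)$ and $e \in \g(2)$), so under $S(\bfp)\cong\kk[e+\m^\perp]$ each $x_j$ becomes the \emph{linear} function $(x_j,\cdot)$, and monomials of polynomial degree $\ge 2$ vanish to order $\ge 2$ at $e$; hence the linear part at $e$ really is the $S^1(\bfp)$-component of $X_i$. You rely on this implicitly and it is worth stating. The paper's argument is more self-contained, needing only $\tw(\m)$-invariance rather than the full isomorphism of Lemma~\ref{L:isomorphismlemma}; yours makes the geometry transparent and avoids the commutator bookkeeping.
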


\begin{proof}
Part (i) follows directly from Lemma~\ref{L:symmetricequivariance}, as does part (ii) except
from checking that the two conditions indeed imply that $\lambda_\ba = 0$.

For the first of these conditions we proceed as in the proof of \cite[Lemma~3.7]{BGK}.
Let $\hat x_i = \gr_{n_i+2} \Theta(x_i) \in S(\bfp)^{\tw(M)}$.
Since $\zeta(\hat x_i) = x_i$, we certainly have $\hat x_i \equiv y \bmod \bigoplus_{n > 1} S^n(\bfp)$,
where $y \in \g(n_i)$ and $\zeta(y) = x_i$.  We proceed to prove that actually
$y$ centralizes $e$, which implies that $y = x_i$.
This can be done in the same way as in \cite[Lemma~3.7]{BGK}, which we recap here.

Supposing $y \not\in \g^e$, we have $0 \neq [y,e] \in \g(n_i+2)$ and we may
find $z \in \g(-n_i-2) \sub \m$ such that $\chi([z,y]) = ([z,y],e) = (z,[y,e]) \neq 0$.  Since
$\hat x_i \in S(\bfp)^{\tw(M)} \sub S(\bfp)^{\tw(\m)}$, we have that
$\tw(z) \cdot \hat x_i = 0$.  We may write $\hat x_i = y + u$, where
$u \in \bigoplus_{n > 1} S^n(\bfp)$.  Then we have
\begin{align*}
\tw(z) \cdot \hat x &= \pr([z,y]+[z,u]) \\
&= \chi([z,y]) + \pr([z,u]).
\end{align*}
Next we note that $u$ is a sum of monomials of the form $x_{j_1} \cdots x_{j_s}$, where
$s > 1$ and $j_1 < \dots < j_s$, and so that $n_{j_k} < n_i$ for each $i$.  We have that $[z,x_{j_1} \cdots x_{j_s}]$ is a sum of monomials
$x_{j_1} \cdots x_{j_{k-1}} [z,x_{j_k}] x_{j_{k+1}} \cdots x_{j_s}$, and $[z,x_{j_k}] \in \bigoplus_{j < -2} \g(j)$, so that $\chi([z,x_{j_k}]) = 0$.
Hence, $\pr([z,u]) = 0$ so that $\tw(z) \cdot \hat x = \chi([z,y]) \neq 0$.  This contradiction means that we
must have $y \in \g^e$ as required.

Now we show that $\lambda_\ba = 0$ whenever $|\ba|_e$ has different parity to $n_i$. When $p \neq 2$
we just note that the eigenvalue of $\sigma = \gamma(-1)$ on $x^\ba$ is $(-1)^{|\ba|_e}$ and use the fact that $\Theta(x_i)$ is an eigenvector for $\sigma$.
For the case $p =2$
the automorphism $\sigma$ is trivial, however we have chosen an even good grading, so
$|\ba|_e$ is even for all monomials $x^\ba$ appearing.
\end{proof}

We include a consequence of these PBW theorem here.

\begin{Corollary} \label{C:Qfree}
As a right $U(\g,e)$-module $Q$ is free on the set of $x^\ba + I$, for  $\ba \in \Z_{\ge 0}^m$ with $a_1 = \dots = a_r = 0$.
\end{Corollary}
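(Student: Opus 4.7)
The plan is to show that the set $B := \{(x^\ba + I) \cdot \Theta^\bb \mid a_1 = \dots = a_r = 0,\ \bb \in \Z_{\ge 0}^r\}$ is a $\kk$-basis of $Q$; together with the PBW basis $\{\Theta^\bb\}$ of $U(\g,e)$ from Theorem~\ref{T:PBWbasisthm}(i), this is equivalent to the stated freeness. First I would verify that the right $U(\g,e)$-action $(u + I) \cdot v := u\tilde v + I$ is well-defined on $Q$ (because $I$ is a left ideal of $U(\g)$), so that $B$ makes sense.

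The proof then proceeds by a filtration argument using the Kazhdan filtration, together with an auxiliary filtration by total tensor degree. Under the identification $\gr Q \cong S(\bfp)$, the element $x^\ba + I \in F_{|\ba|_e} Q$ has leading Kazhdan symbol equal to $x^\ba \in S(\bfp)$. By Theorem~\ref{T:PBWbasisthm}(ii), the leading symbol $\hat\Theta_i := \gr_{n_i + 2} \Theta(x_i)$ lies in $S(\bfp)$ and has the form $\hat\Theta_i = x_i + (\text{terms of tensor degree} \ge 2)$, all of the same Kazhdan degree $n_i + 2$. Consequently
$$
\hat\Theta^\bb := \hat\Theta_1^{b_1} \cdots \hat\Theta_r^{b_r} = x_1^{b_1} \cdots x_r^{b_r} + (\text{strictly higher tensor degree terms})
$$
in $S(\bfp)$, and the leading Kazhdan symbol of $(x^\ba + I)\cdot\Theta^\bb$ is
$$
x^\ba \hat\Theta^\bb = x^{(b_1,\dots,b_r,a_{r+1},\dots,a_m)} + (\text{strictly higher tensor degree terms}).
$$

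The decisive observation is that as $(\ba,\bb)$ ranges over pairs with $a_1 = \dots = a_r = 0$, the leading tensor-degree monomials $x^{(b_1,\dots,b_r,a_{r+1},\dots,a_m)}$ range bijectively over all PBW monomials $x^\bc$ of $S(\bfp)$: one simply reads off $\bb = (c_1,\dots,c_r)$ and $\ba = (0,\dots,0,c_{r+1},\dots,c_m)$. Within each finite-dimensional Kazhdan-graded piece $\gr_d S(\bfp)$, the leading Kazhdan symbols of the elements of $B$ of Kazhdan degree $d$ are therefore in upper-triangular correspondence (with respect to tensor degree) with the PBW basis of $\gr_d S(\bfp)$, so they form a basis of that piece.

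By the standard lifting principle for filtered modules with non-negatively bounded filtration — if the leading symbols of a set of filtered elements form a basis of the associated graded, then the elements themselves form a basis — we conclude that $B$ is a $\kk$-basis of $Q$. Reassembling this as a statement about the right $U(\g,e)$-action, the map $\bigoplus_{a_1=\dots=a_r=0} (x^\ba + I)\, U(\g,e) \to Q$ is an isomorphism, giving the required freeness. No step presents a real obstacle; the only mild subtlety is the need to combine the Kazhdan filtration with the secondary tensor-degree filtration in order to exploit Theorem~\ref{T:PBWbasisthm}(ii) and obtain the triangular structure.
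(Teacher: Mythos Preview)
Your proof is correct and takes essentially the same approach as the paper: both pass to the associated graded $\gr Q \cong S(\bfp)$, use the explicit form of the PBW generators from Theorem~\ref{T:PBWbasisthm}(ii) to obtain a triangular structure with respect to total (tensor) degree, and then invoke a standard filtered-lifting argument. The paper phrases the triangularity step as ``considering terms of lowest total degree'' and finishes with a dimension count in each Kazhdan filtered piece, whereas you state the bijection of leading monomials explicitly; these are the same argument. One small point: the well-definedness of the right $U(\g,e)$-action on $Q$ needs not only that $I$ is a left ideal (which handles independence of the lift $\tilde v$) but also the $\m$-invariance of $v + I$ (which handles independence of the representative $u$), though this is implicit in the algebra structure on $\hU(\g,e)$ already established in Lemma~\ref{L:invariantsareanalgebra}.
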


\begin{proof}
By a standard filtration argument it is enough to show that $\gr Q \cong S(\bfp)$ is free over
$\gr U(\g,e)$ with basis $\{x^\ba \mid \ba \in \Z_{\ge 0}^m, a_1 = \dots = a_r = 0\}$.
To prove this we let $\theta_i = \gr_{n_i+2} \Theta$, which we view as an element of $S(\bfp)$, and
define $\theta^\bb = \theta^{b_1}\cdots\theta^{b_r}$ for $\bb \in \Z_{\ge 0}^r$.  Now consider
$$
\mathfrak B = \{\theta^\bb x^\ba \mid \bb \in \Z_{\ge 0}^r, \ba \in \Z_{\ge 0}^m, a_1 = \dots = a_r = 0\}.
$$
By considering terms of lowest total degree, we see that $\mathfrak B$ is linearly independent.  To finish the
proof it suffices to observe that for each $n \in \Z_{\ge 0}$, the number of elements of $\mathfrak B$ of Kazhdan degree at
most $n$ is equal to the dimension of $F_n Q$.
\end{proof}

We note that the characteristic zero version of the corollary above
is proved as part (3) of the theorem in \cite{Sk}, but the proof there
is not directly applicable.  Another proof is given in \cite[Lemma~3.6]{Go},
and can be adapted to the present situation, but we included the details
above for convenience of the reader.

\begin{Remark} \label{R:gensandrels}
For the purposes of this paper, we only require the formula for the
PBW generators given in Theorem~\ref{T:PBWbasisthm}.  We note that much more can be
proved about the commutators between these generators.
More specifically, the
PBW generators $\Theta(x_1),...,\Theta(x_r)$ satisfy commutators relations
\begin{equation}\label{therelation}
[\Theta(x_i), \Theta(x_j)] = \Theta([x_i, x_j]) + F_{i,j}(\Theta(x_1),...,\Theta(x_r)) \bmod F_{n_i + n_j} U(\g,e)
\end{equation}
where $F_{i,j}$ is a polynomial with zero constant term and linear term.  This can be proved
using the similar arguments to those in the proof of \cite[Theorem~4.6]{PrST}.


In fact it is possible to prove an analogue of \cite[Theorem~3.8]{BGK} showing that
$U(\g,e)$ is a filtered deformation of $U(\g^e)$ with respect to another filtration
called the {\em loop filtration}.  This requires a more detailed look at formulae
for $\Theta(x_i)$, when $x_i \in \g^e(0)$ as in \cite[Lemma~2.3]{PrJI}, and is not required in this paper.
We do note though, that it is possible to deduce the above formulae for the commutators
from this result.
\end{Remark}

We would like a similar description of generators for $\hU(\g,e)$, and
the next lemma is helpful for this.  We note that this result
is essentially implicit in \cite[Theorem~2.1]{PrCQ}.

\begin{Lemma}\label{L:pcentreannQ}
$\Ann_{Z_p(\g)} Q$ is the ideal of $Z_p(\g)$ generated by $y^p - y^{[p]} - \chi(y)^p$ for $y \in \m$.
\end{Lemma}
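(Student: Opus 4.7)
Call the ideal on the right $J_\m$. The plan is to prove the two inclusions $J_\m \subseteq \Ann_{Z_p(\g)}Q$ and $\Ann_{Z_p(\g)}Q \subseteq J_\m$ separately, the first by a direct calculation reducing to the vanishing $\chi(y^{[p]}) = 0$ for $y \in \m$, and the second by showing injectivity of the natural map $Z_p(\g)/J_\m \to Q$ via a leading-term argument in the Kazhdan filtration.

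For the first inclusion: since $Q = U(\g)(1+I)$ is cyclic and $Z_p(\g)$ is central in $U(\g)$, it suffices to check $y^p - y^{[p]} - \chi(y)^p \in I$ for every $y \in \m$. In characteristic $p$ the scalar $\chi(y)$ commutes with $y$, so $(y-\chi(y))^p = y^p - \chi(y)^p$ by Frobenius, and this lies in $I$ because $y - \chi(y) \in \m_\chi$. Since $\m$ is restricted, $y^{[p]} \in \m$, giving $y^{[p]} - \chi(y^{[p]}) \in I$. Combining reduces us to the claim $\chi(y^{[p]}) = 0$. Decomposing $y = \sum_{j \le -1} y_j$ into homogeneous pieces and applying Jacobson's formula yields $y^{[p]} = \sum y_j^{[p]} + z$, where each $y_j^{[p]} \in \g(pj)$ and $z$ is a sum of iterated brackets of length $p$ in the $y_j$'s lying in $\g(\le -p)$. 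For $p \ge 3$ all these components lie in $\g(<-2)$ and so pair trivially with $e \in \g(2)$ under $(\cdot\,,\cdot)$; for $p = 2$ the even-grading assumption forces $\l = 0$, whence $\m \subseteq \bigoplus_{j \le -2}\g(j)$ and both $\sum y_j^{[2]}$ and $z$ lie in $\g(\le -4)$.

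For the second inclusion, the kernel of the natural map $\bar\phi \colon Z_p(\g)/J_\m \to Q$, $\bar z \mapsto z + I$, equals $(Z_p(\g) \cap I)/J_\m$, so it suffices to show $\bar\phi$ is injective. Extend the basis $x_1,\dots,x_m$ of $\bfp$ by a basis $y_1,\dots,y_k$ of $\m$ to a basis of $\g$; using the isomorphism $\xi$ of \eqref{e:ximap} and the PBW theorem, $Z_p(\g)$ is identified with the polynomial algebra on the commuting variables $v^{\{p\}} := v^p - v^{[p]}$ over this basis, and modulo $J_\m$ each $y_i^{\{p\}}$ becomes the scalar $\chi(y_i)^p$. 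Thus $Z_p(\g)/J_\m$ has $\kk$-basis $\{(x^{\{p\}})^\bb := \prod_{i=1}^m(x_i^p - x_i^{[p]})^{b_i} \mid \bb \in \Z_{\ge 0}^m\}$. To conclude, I would pass to the associated graded of the Kazhdan filtration: $x_i^p$ has Kazhdan degree $p(n_i+2)$ with symbol $x_i^p \in S(\bfp) = \gr Q$, whereas $x_i^{[p]} \in \g(pn_i)$ has strictly smaller Kazhdan degree $pn_i+2$, so the symbol of $\bar\phi((x^{\{p\}})^\bb)$ in $\gr Q$ is the PBW monomial $x^{p\bb}$ in Kazhdan degree $p|\bb|_e$. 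Distinct $\bb$ yield distinct PBW monomials, so a standard leading-term argument rules out any nontrivial relation. The main technical obstacle is the vanishing $\chi(y^{[p]}) = 0$ for non-homogeneous $y \in \m$, which requires combining Jacobson's formula with the grading and compatibility hypotheses; the remainder of the proof is a clean PBW computation.
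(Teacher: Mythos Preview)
Your proof is correct and follows essentially the same two-inclusion strategy as the paper: both verify that the generators annihilate $1+I$ by reducing to $\chi(y^{[p]})=0$, and both deduce the reverse inclusion from the linear independence of the monomials $\prod_i (x_i^p-x_i^{[p]})^{b_i}+I$ in $Q$ via the PBW theorem. The only noteworthy difference is that you treat the vanishing $\chi(y^{[p]})=0$ for arbitrary $y\in\m$ via Jacobson's formula, whereas the paper simply works with a (homogeneous) basis $y_1,\dots,y_s$ of $\m$, where the vanishing is immediate from the grading; since $y\mapsto y^p-y^{[p]}-\chi(y)^p$ is $p$-semilinear this already suffices, so your Jacobson detour is sound but unnecessary.
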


\begin{proof}
Let $y_1,\dots,y_s$ be a basis of $\m$.  Then $y_i(1 + I) = \chi(y_i) + I$ for each $i$ and so
$(y_i^p - y_i^{[p]} - \chi(y_i)^p)(1 + I) =  \chi(y_i^{[p]}) + I  = 0 + I$.  It follows that
$y_i^p - y_i^{[p]} - \chi(y_i)^p \in \Ann_{Z_p(\g)} Q$.

By the PBW theorem the vectors
$(x_1^p - x_1^{[p]})^{a_1}\cdots (x_m^p - x_m^{[p]})^{a_m} + I \in Q$ with $a_i \in \Z_{\ge 0}$
are linearly independent.
We have that the set of all
$$
(y_1^p - y_1^{[p]}- \chi(y_1)^p)^{b_1}\cdots (y_s^p - y_s^{[p]} - \chi(y_s)^p)^{b_s}(x_1^p - x_1^{[p]})^{a_1}\cdots (x_m^p - x_m^{[p]})^{a_m}
$$
for $a_i,b_j \in \Z_{\ge 0}$
is a basis of $Z_p(\g)$, and from this we can deduce the result.
\end{proof}

In order to give a PBW basis of $\hU(\g,e)$
we define $\hTheta$ to be the $T^\chi$-equivariant linear map
$$
\hTheta : \bfa^{(1)} \to \hU(\g,e)
$$
by setting
$\hTheta(x_i) := x_i^p - x_i^{[p]} + I \in \hU(\g,e)$ for $i = r+1,...,m$; we recall that $\bfa^{(1)}$ is the
Frobenius twist of $\bfa$.
We use the notation
$$
\Theta^\ba \hTheta^\bb := \Theta(x_1)^{a_1} \cdots \Theta(x_r)^{a_r} \hTheta(x_{r+1})^{b_{r+1}}  \cdots  \hTheta(x_m)^{b_m},
$$
for $\ba \in \Z_{\ge 0}^r$, $\bb =(b_{r+1},\dots,b_m) \in \Z_{\ge 0}^{m-r}$, and
define $Z_p(\bfa)$ to be the subalgebra of $\hU(\g,e)$ generated by $\{\hTheta(x_j) \mid j = r+1,...,m \}$.
It follows from Lemma~\ref{L:pcentreannQ} that $Z_p(\bfa)$ is a polynomial algebra with generators of degrees $pn_{r+1},...,p n_{m}$.
We are now ready to give the PBW basis of $\hU(\g,e)$.

\begin{Lemma}
The set
$$
\{\Theta(x_i), \hTheta(x_j) \mid i = 1,...,r, j = r+1,...,m \}
$$
generates $\hU(\g,e)$ and the set of PBW
monomials
$$
\{ \Theta^\ba \hTheta^\bb  \mid 0 \leq \ba \in \Z_{\ge 0}^r, \bb \in \Z_{\ge 0}^{m-r} \}
$$
is a basis of $\hU(\g,e)$.
\end{Lemma}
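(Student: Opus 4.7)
The strategy is a filtered/graded argument in the spirit of Theorem~\ref{T:PBWbasisthm}, using the Kazhdan filtration together with the PBW theorem for $\hU(\g,e)$. First I would check that $\hTheta(x_j) = x_j^p - x_j^{[p]} + I$ really lies in $\hU(\g,e) = Q^\m$ for $j > r$: this is immediate since $x_j^p - x_j^{[p]} \in Z_p(\g)$ is central in $U(\g)$, so its image in $Q$ is annihilated by $\ad \m$. The same observation shows each $\hTheta(x_j)$ is actually central in $\hU(\g,e)$, so the monomials $\Theta^\ba \hTheta^\bb$ are well-defined independently of any ordering among the $\hTheta(x_j)$'s, and the $\hTheta$-factors may be commuted freely past the $\Theta$-factors both in $\hU(\g,e)$ and in $\gr \hU(\g,e)$.

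Next I would identify the top symbols in $\gr \hU(\g,e) \subseteq \gr Q = S(\bfp)$. For $i \le r$, the symbol $\hat x_i := \gr_{n_i+2} \Theta(x_i)$ satisfies $\hat x_i = x_i + (\text{terms of total degree} \ge 2)$ by the construction used in the proof of Theorem~\ref{T:PBWbasisthm}. For $j > r$, since $x_j^p$ has Kazhdan degree $p(n_j+2)$ while $x_j^{[p]} \in \g(pn_j)$ has the strictly smaller Kazhdan degree $pn_j + 2$, we obtain $\hTheta(x_j) \in F_{p(n_j+2)}\hU(\g,e)$ with symbol $x_j^p \in S(\bfp)$. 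Using centrality of the $\hTheta(x_j)$'s the top Kazhdan symbol of $\Theta^\ba \hTheta^\bb$ is
\[
\hat x^\ba \cdot \prod_{j=r+1}^m x_j^{pb_j} = x_1^{a_1}\cdots x_r^{a_r}\, x_{r+1}^{pb_{r+1}}\cdots x_m^{pb_m} + (\text{strictly higher total degree}).
\]
The leading total-degree monomials $x_1^{a_1}\cdots x_r^{a_r} x_{r+1}^{pb_{r+1}} \cdots x_m^{pb_m}$ are pairwise distinct in the polynomial ring $S(\bfp)$ as $(\ba,\bb)$ varies, hence linearly independent, so a standard filtered argument shows that the PBW monomials $\Theta^\ba \hTheta^\bb$ are linearly independent in $\hU(\g,e)$.

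For spanning I would match Hilbert series. By Lemma~\ref{L:hcWdegrees} the Kazhdan-graded algebra $\gr \hU(\g,e)$ is a polynomial ring on $m$ generators, of Kazhdan degrees $n_i+2$ for $i \le r$ and $p(n_j+2)$ for $j > r$, so its Poincar\'e series equals
\[
\prod_{i=1}^r \frac{1}{1-t^{n_i+2}} \prod_{j=r+1}^m \frac{1}{1-t^{p(n_j+2)}}.
\]
This is precisely the generating function counting monomials $\Theta^\ba \hTheta^\bb$ by Kazhdan degree, so combined with linear independence it forces the PBW monomials to span each $F_N\hU(\g,e)$, and hence all of $\hU(\g,e)$. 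In particular the proposed set $\{\Theta(x_i),\hTheta(x_j)\}$ generates $\hU(\g,e)$ as an algebra.

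The main subtlety is the symbol computation in the second step: one must simultaneously track the Kazhdan filtration (to form symbols and invoke the PBW theorem for $\hU(\g,e)$) and the total-degree grading on $S(\bfp)$ (to isolate a uniquely determined leading monomial). The essential input is the precise form of $\hat x_i$ extracted from the proof of Theorem~\ref{T:PBWbasisthm}, namely that its linear part is exactly $x_i$; once this is in hand the remainder of the argument is a routine Hilbert series matching.
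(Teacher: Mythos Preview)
Your proposal is correct and follows essentially the same approach as the paper: linear independence of the PBW monomials via a symbol computation in $\gr \hU(\g,e) \subseteq S(\bfp)$ (the paper phrases this as ``using Theorem~\ref{T:PBWbasisthm}(ii)''), followed by spanning via a Hilbert series comparison against Lemma~\ref{L:hcWdegrees}. You spell out the symbol calculation and the series matching in more detail than the paper does, but the underlying argument is the same.
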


\begin{proof}
Let $A$ be the subalgebra of $\hU(\g,e)$ generated by $Z_p(\bfa)$ and $U(\g,e)$.
Using Theorem~\ref{T:PBWbasisthm}(ii) we see that the set of PBW monomials above is linearly independent,
so that $A \cong Z_p(\bfa) \otimes U(\g,e)$.
Now using Lemma~\ref{L:hcWdegrees} we see that the generators of
$A$ have the same Kazhdan degrees as the generators of $\hU(\g,e)$, so we must have equality.
\end{proof}

We arrive at a comparison between $U(\g,e)$ and $\hU(\g,e)$.

\begin{Corollary}\label{C:UvsUhat}
$\hU(\g,e) \cong U(\g,e) \otimes Z_p(\bfa)$ as algebras and the following composition is an isomorphism
$$
U(\g,e) \into \hU(\g,e) \onto \hU(\g,e)/ \xi(\bfa^{(1)}) \hU(\g,e),
$$
where $\xi$ is defined in (\ref{e:ximap}).
\end{Corollary}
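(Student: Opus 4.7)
The plan is to deduce both assertions from the PBW basis of $\hU(\g,e)$ given in the preceding lemma, combined with the observation that the elements $\hTheta(x_j) = x_j^p - x_j^{[p]} + I$ generating $Z_p(\bfa)$ come from the centre $Z_p(\g)$ of $U(\g)$. First I will construct a candidate isomorphism, namely the multiplication map
$$
\pi : U(\g,e) \otimes Z_p(\bfa) \longrightarrow \hU(\g,e), \qquad u \otimes z \mapsto uz.
$$
To see that $\pi$ is a well-defined algebra homomorphism, the key observation will be that for each $j > r$ the element $x_j^p - x_j^{[p]}$ is central in $U(\g)$, so its image $\hTheta(x_j)$ commutes with every element of $\hU(\g,e) \sub U(\g)/I$; hence $Z_p(\bfa)$ is a central subalgebra of $\hU(\g,e)$.

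Bijectivity of $\pi$ will be a direct comparison of bases. Using that $Z_p(\bfa)$ is a polynomial algebra on $\hTheta(x_{r+1}),\dots,\hTheta(x_m)$, a basis of the domain is given by the tensors $\Theta^\ba \otimes \hTheta^\bb$ with $\ba \in \Z_{\ge 0}^r$ and $\bb \in \Z_{\ge 0}^{m-r}$, and these map to the PBW basis $\Theta^\ba \hTheta^\bb$ of $\hU(\g,e)$ furnished by the previous lemma. This will establish the first assertion.

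For the second assertion I will use $\pi$ to translate the quotient into something transparent. By definition $\xi(\bfa^{(1)}) \sub Z_p(\g)$ is the $\kk$-span of $x_j^p - x_j^{[p]}$ for $j > r$, whose images in $\hU(\g,e)$ are precisely the generators of $Z_p(\bfa)$; hence under $\pi$ the two-sided ideal $\xi(\bfa^{(1)})\hU(\g,e)$ corresponds to $U(\g,e) \otimes Z_p(\bfa)_+$, where $Z_p(\bfa)_+$ denotes the augmentation ideal of the polynomial algebra $Z_p(\bfa)$. The quotient is therefore $U(\g,e) \otimes (Z_p(\bfa)/Z_p(\bfa)_+) \cong U(\g,e)$, and tracing through the identifications shows that the composition $U(\g,e) \into \hU(\g,e) \onto \hU(\g,e)/\xi(\bfa^{(1)})\hU(\g,e)$ is exactly the identity on $U(\g,e)$.

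There is no genuine obstacle here: the proof will essentially be a bookkeeping exercise once one has the PBW basis for $\hU(\g,e)$ and the centrality of the generators $\hTheta(x_j)$ with $j > r$, both of which are already in hand.
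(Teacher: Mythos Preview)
Your proposal is correct and follows essentially the same route as the paper. The paper does not spell out a separate proof of this corollary: the isomorphism $\hU(\g,e)\cong Z_p(\bfa)\otimes U(\g,e)$ is already obtained inside the proof of the preceding lemma (where the subalgebra $A$ generated by $U(\g,e)$ and $Z_p(\bfa)$ is shown to satisfy $A\cong Z_p(\bfa)\otimes U(\g,e)$ and then $A=\hU(\g,e)$), and the second assertion is an immediate consequence; your argument simply makes these steps explicit.
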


\section{\texorpdfstring{The $p$-centre of the modular finite $W$-algebras}{The p-centre of modular finite W-algebras}}
\label{S:pcentre}
As is the case for the enveloping algebra of a restricted Lie algebra, the representation theory
of $U(\g,e)$ is controlled by its $p$-centre.  We define this $p$-centre in Definition~\ref{D:pcentre}
and then proceed to prove a number of results about it.  After that we introduce reduced finite $W$-algebras
and prove that they coincide with those previously considered in work of Premet.

\subsection{\texorpdfstring{The definition and basic properties of the $p$-centre}
{The definition and basic properties of the p-centre}} \label{ss:pcentre}

Recall the isomorphism $\xi : S(\g)^{(1)} \rightarrow Z_p(\g)$ from (\ref{e:ximap}). By Lemma~\ref{L:pcentreannQ}, we have that
$I \cap Z_p(\g) = \xi(\m_\chi^{(1)})Z_p(\g)$, where $\xi(\m^{(1)}_\chi) = \{y^p - y^{[p]} - \chi(y)^p \mid y \in \m\}$, and we write $I_p$ for $I \cap Z_p(\g)$.
We can view $Z_p(\g)/I_p$ as a subalgebra
of $\hU(\g,e)$, as it is the image of the map $Z_p(\g) \to Q^\m = \hU(\g,e)$.
Since $\xi$ is $G$-equivariant and $\m_\chi$ is $M$-stable, by Lemma~\ref{L:Npreservesm},
we see that  the adjoint action of $M$ on $\hU(\g,e)$ preserves the subalgebra $Z_p(\g)/I_p$.

\begin{Definition} \label{D:pcentre}
The {\em $p$-centre $Z_p(\g,e)$ of $U(\g,e)$} is the invariant subalgebra
$$
Z_p(\g,e) := (Z_p(\g)/I_p)^M.
$$
\end{Definition}

In the sequel we will consider the following subvarieties of $\g^*$
\begin{align*}
\chi + \hat\m &:= \{\chi + (x, \cdot) \mid x \in \m^\perp\};\\
\chi + \check\v &:= \{\chi + (x, \cdot) \mid x \in \v\}.
\end{align*}
and their Frobenius twists which are subvarieties of $(\g^*)^{(1)}$. From the above remarks, the kernel of the composition
$\kk[\g^*]^{(1)} = S(\g)^{(1)} \isoto Z_p(\g) \twoheadrightarrow Z_p(\g)/I_p$
is generated by $\m_\chi^{(1)} \subseteq S(\g)^{(1)}$, which generates the defining ideal of $(\chi + \hat\m)^{(1)} \subseteq (\g^*)^{(1)}$.
Since $\g^*$ identifies with $(\g^*)^{(1)}$ as a $G$-set we may invoke Lemma~\ref{L:isomorphismlemma}
to see that the restriction map from $(\kk[\chi + \hat\m]^{(1)})^M$ to
$\kk[\chi + \check\v]^{(1)}$ is an isomorphism. The defining ideal of $(\chi + \check\v)^{(1)}$ in $\kk[\g^{\ast}]^{(1)}$
is generated by $\{x - \chi(x) \mid x \in (\m \oplus \bfa)^{(1)}\}$ and so we arrive at the next
lemma about the $p$-centres.  For the statement of this lemma, we define
$\bar I_p := \xi((\bfa \oplus \m_\chi)^{(1)})Z_p(\g)$ to be
the ideal of $Z_p(\g)$ generated by $\{x^p - x^{[p]} - \chi(x)^p \mid x\in  \m\oplus\bfa\}$.

\begin{Lemma} \label{L:specisos}
The map $\xi : \kk[\g^*]^{(1)} \rightarrow Z_p(\g)$ induces the vertical isomorphisms in the following commutative diagram
\begin{center}
\begin{tikzpicture}[node distance=2cm, auto]
\pgfmathsetmacro{\shift}{0.3ex}
\node (A) {$(\kk[\chi + \hat\m]^{(1)})^M$};
\node (G) [right of=A] {};
\node (B) [right of=G] {$\kk[\chi + \hat\m]^{(1)}$};
\node (H) [right of=B] {};
\node (C) [right of=H] {$\kk[\chi + \check\v]^{(1)}$};

\node (D) [below of=A] {$Z_p(\g,e)$};
\node (E) [below of=B] {$Z_p(\g)/I_p$};
\node (F) [below of=C] {$Z_p(\g)/\bar I_p$};

\draw[right hook->] (A) --(B) node[above,midway] {$ $};
\draw[->>] (B) --(C) node[above,midway] {$ $};

\draw[right hook->] (D) --(E) node[above,midway] {$ $};
\draw[->>] (E) --(F) node[above,midway] {$ $};

\draw[->] (A) --(D) node[right,midway] {$\sim$};
\draw[->] (B) --(E) node[right,midway] {$\sim$};
\draw[->] (C) --(F) node[right,midway] {$\sim$};
\end{tikzpicture}
\end{center}
and the composition of the maps in each row are isomorphisms.
\end{Lemma}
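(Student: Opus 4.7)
The plan is to induce all three vertical maps from the isomorphism $\xi : \kk[\g^*]^{(1)} \isoto Z_p(\g)$, observe that each square commutes automatically, and deduce that both row compositions are isomorphisms from Lemma~\ref{L:isomorphismlemma}.

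First I would establish the middle and right vertical isomorphisms. The preamble to the statement records $I_p = \xi(\m_\chi^{(1)}) Z_p(\g)$, so $\xi$ carries the ideal of $\kk[\g^*]^{(1)}$ generated by $\m_\chi^{(1)}$ isomorphically onto $I_p$. Since $\m_\chi$ cuts out $\chi + \hat\m$ in $\g^*$ under the identification $\g \cong \g^*$, Frobenius-twisting yields the asserted isomorphism $\kk[\chi + \hat\m]^{(1)} \isoto Z_p(\g)/I_p$. For the right-hand vertical map I would first verify that $\m \oplus \bfa = \v^\perp$: the containment $\subseteq$ combines $\v \subseteq \m^\perp$ from \S\ref{ss:goodtransverse} with the stipulation $\bfa \perp \v$ from the notation list, while equality follows from the dimension count $\dim \m + \dim \bfa = \dim \g - \dim \g^e = \dim \v^\perp$. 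This identifies $(\m \oplus \bfa)_\chi := \{x - \chi(x) \mid x \in \m \oplus \bfa\}$ as a generating set for the defining ideal of $\chi + \check\v$, so $\xi$ descends analogously to $\kk[\chi + \check\v]^{(1)} \isoto Z_p(\g)/\bar I_p$.

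Next I would obtain the left vertical isomorphism by restricting the middle one to $M$-invariants. This is meaningful because $\xi$ is $G$-equivariant (see \S\ref{ss:notation}), the ideal $I_p$ is $M$-stable by Lemma~\ref{L:Npreservesm}, and Frobenius-twisting commutes with $M$-invariants as it only alters the $\kk$-linear structure. Commutativity of each square in the diagram is automatic because the vertical arrows are all restrictions of a single map, and the horizontal arrows in corresponding rows are inclusion and quotient, respectively.

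Finally, the top row composition is an isomorphism as a direct consequence of Lemma~\ref{L:isomorphismlemma}. Under the identification $\g \cong \g^*$ that lemma translates into an isomorphism $M \times (\chi + \check\v) \isoto \chi + \hat\m$, with $M$ acting by left translation on the first factor. Applying $\kk[\cdot]$, passing to $M$-invariants, and Frobenius-twisting gives precisely the top row composition $(\kk[\chi + \hat\m]^{(1)})^M \isoto \kk[\chi + \check\v]^{(1)}$; the bottom row composition is then an isomorphism by commutativity. The only point of real substance is the appeal to Lemma~\ref{L:isomorphismlemma}; the remainder is formal bookkeeping with ideals, invariants, and Frobenius twists.
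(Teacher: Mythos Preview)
Your proposal is correct and follows essentially the same route as the paper, whose argument is the paragraph immediately preceding the lemma: identify the kernels of $\xi$ modulo $I_p$ and $\bar I_p$ with the defining ideals of $(\chi+\hat\m)^{(1)}$ and $(\chi+\check\v)^{(1)}$, then invoke Lemma~\ref{L:isomorphismlemma} for the row isomorphism. Your explicit verification that $\m\oplus\bfa=\v^\perp$ and your remark on $G$-equivariance of $\xi$ for the left column are useful elaborations that the paper leaves implicit.
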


The inclusion $\phi : Z_p(\g,e) \hookrightarrow Z_p(\g)/I_p$ induces a dominant morphism of maximal spectra
\begin{equation}\label{e:specinclusion}
\phi^* : \Spec(Z_p(\g)/I_p) \rightarrow \Spec(Z_p(\g,e)),
\end{equation}
After identifying $\Spec(Z_p(\g)/I_p)$ with $(e+\m^\perp)^{(1)}$,
we can apply Lemma~\ref{L:isomorphismlemma} to deduce that $\Spec(Z_p(\g,e))$ is an orbit space for the action of $M$ on
$(e+\m^\perp)^{(1)}$.  Thus we obtain the following lemma.

\begin{Lemma}\label{L:conjugationlem}
The map (\ref{e:specinclusion}) is surjective and the fibre over any maximal ideal of $Z_p(\g,e)$ is a single $M$-orbit.
\end{Lemma}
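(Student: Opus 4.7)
The plan is to translate everything to the geometric side using the isomorphisms from Lemma~\ref{L:specisos}, then invoke Lemma~\ref{L:isomorphismlemma} to produce a clean description of $\phi^*$ as a quotient map whose fibres are visibly single $M$-orbits.

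First, I would use Lemma~\ref{L:specisos} to identify $\Spec(Z_p(\g)/I_p)$ with $(\chi+\hat\m)^{(1)}$ and $\Spec(Z_p(\g,e))$ with $(\chi+\check\v)^{(1)}$, in such a way that the inclusion $\phi : Z_p(\g,e) \hookrightarrow Z_p(\g)/I_p$ becomes dual to the inclusion $\kk[\chi+\check\v]^{(1)} \hookrightarrow \kk[\chi+\hat\m]^{(1)}$ realised as the $M$-invariants there. Using the non-degenerate form $(\cdot\,,\cdot)$, this in turn identifies with the $M$-equivariant inclusion of coordinate rings $\kk[e+\v]^{(1)} \into \kk[e+\m^\perp]^{(1)}$ coming from the restriction of functions along $e+\v \subseteq e+\m^\perp$. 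Hence $\phi^*$ is identified with the restriction morphism $(e+\m^\perp)^{(1)} \to (e+\v)^{(1)}$.

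Next, I would apply Lemma~\ref{L:isomorphismlemma}, which provides the $M$-equivariant isomorphism $M \times (e + \v) \isoto e + \m^\perp$ via the adjoint action. Taking Frobenius twists (which is harmless because $M$ is defined over $\mathbb F_p$ and the Frobenius-twist functor preserves isomorphisms of $\kk$-varieties) yields an $M$-equivariant isomorphism $M \times (e+\v)^{(1)} \isoto (e+\m^\perp)^{(1)}$. Under this identification the map $\phi^*$ described in the previous paragraph becomes the projection onto the second factor.

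The conclusion is immediate from this last description: projection onto $(e+\v)^{(1)}$ is surjective, proving the first claim, and the fibre over any point $y \in (e+\v)^{(1)}$ is $M \times \{y\}$, which is a single free $M$-orbit, proving the second. The only real subtlety is keeping track of the Frobenius twist carefully when transporting Lemma~\ref{L:isomorphismlemma}; once that bookkeeping is in place, the result is just a repackaging of the transversality statement already established for $e+\v$.
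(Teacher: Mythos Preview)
Your proof is correct and follows the same approach as the paper: the paper's argument (given in the paragraph preceding the lemma rather than in a separate proof) identifies $\Spec(Z_p(\g)/I_p)$ with $(e+\m^\perp)^{(1)}$ via Lemma~\ref{L:specisos} and then invokes Lemma~\ref{L:isomorphismlemma} to realise $\Spec(Z_p(\g,e))$ as the orbit space, exactly as you do. One minor phrasing issue: the map $\kk[e+\v]^{(1)} \into \kk[e+\m^\perp]^{(1)}$ you describe is not literally ``coming from restriction of functions'' (restriction goes the other way); rather it is the inverse of the restriction isomorphism onto $M$-invariants composed with the inclusion of invariants---but this does not affect your argument, since in the next paragraph you correctly identify $\phi^*$ with the second projection of $M \times (e+\v)^{(1)}$.
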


The next theorem is central to this article.
It was obtained for $p$ sufficiently large
using reduction modulo $p$ in \cite[Theorem~2.1]{PrCQ}, see also
\cite[Remark 2.1]{PrGR} for (iii); there the role of $U(\g,e)$ was played by the modular reduction
of the ordinary finite $W$-algebra. Since $I_p$ is generated by $\xi(\m_\chi^{(1)})$ the PBW theorem for $U(\g)$
implies that the composition $Z_p(\bfp) \hookrightarrow Z_p(\g) \twoheadrightarrow Z_p(\g)/I_p$ is an
isomorphism and we identify $Z_p(\bfp)$ with $Z_p(\g)/I_p$ below.

\begin{Theorem}\label{T:extendedpcentre} $ $
\begin{enumerate}
\item[(i)] $\hU(\g,e)$ is generated by $U(\g,e)$ and $Z_p(\bfp)$.
\item[(ii)] $\hU(\g,e) \cong Z_p(\bfp) \otimes_{Z_p(\g,e)} U(\g,e) \cong Z_p(\bfa) \otimes U(\g,e)$.
\item[(iii)] $U(\g,e)$ is a free $Z_p(\g,e)$-module of rank $p^{\dim \g^\chi}$.
\item[(iv)] $\hU(\g,e)$ is a free $Z_p(\bfp)$-module of rank $p^{\dim \g^\chi}$.
\end{enumerate}
\end{Theorem}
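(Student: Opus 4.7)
The plan is to establish the four parts in the order: a key factorisation of $Z_p(\bfp)$, then (i), (ii), (iv), and finally (iii) by faithfully flat descent from (iv) and (ii). Part (iv) does the analytic work via the Kazhdan filtration and Theorem~\ref{T:extendedPBWtheorem}; the remaining parts are essentially formal consequences of it, Corollary~\ref{C:UvsUhat}, and Lemma~\ref{L:specisos}.

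The auxiliary step is to prove the internal factorisation $Z_p(\bfp) \cong Z_p(\bfa) \otimes_\kk Z_p(\g,e)$ via the multiplication map. By PBW, $Z_p(\bfp)$ is the polynomial algebra on $\xi(x_1),\dots,\xi(x_m)$ with $\xi(x_i)$ of Kazhdan degree $p(n_i+2)$. Lemma~\ref{L:specisos} tells us that the composition $Z_p(\g,e) \hookrightarrow Z_p(\bfp) \twoheadrightarrow Z_p(\g)/\bar I_p \cong \kk[\xi(x_1),\dots,\xi(x_r)]$ is an isomorphism. Injectivity of the multiplication $Z_p(\bfa)\otimes_\kk Z_p(\g,e) \to Z_p(\bfp)$ then follows because any non-trivial $\kk$-linear dependence would survive projection modulo the kernel $Z_p(\bfa)^+ Z_p(\bfp)$ of the last map. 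Surjectivity is obtained by induction on Kazhdan degree, using that $Z_p(\bfp) = Z_p(\g,e) + Z_p(\bfa)^+ Z_p(\bfp)$ and that every factor of an element in $Z_p(\bfa)^+$ has strictly positive Kazhdan degree.

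Part (i) is immediate from Corollary~\ref{C:UvsUhat}, since $\hU(\g,e) = U(\g,e)\cdot Z_p(\bfa) \subseteq U(\g,e)\cdot Z_p(\bfp)$. The second isomorphism in (ii) is precisely Corollary~\ref{C:UvsUhat}; the first follows from the factorisation:
$$
Z_p(\bfp) \otimes_{Z_p(\g,e)} U(\g,e) \cong \bigl(Z_p(\g,e)\otimes_\kk Z_p(\bfa)\bigr) \otimes_{Z_p(\g,e)} U(\g,e) \cong Z_p(\bfa) \otimes_\kk U(\g,e) \cong \hU(\g,e).
$$
For (iv), Theorem~\ref{T:extendedPBWtheorem} provides $\gr \hU(\g,e) \cong \kk[M]^p \otimes S(\g^e)$. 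A direct inspection of generators gives $\gr Z_p(\bfp) = \kk[M]^p \otimes \kk[x_1^p,\dots,x_r^p]$: for $i > r$ the top Kazhdan term of $\xi(x_i)$ provides a polynomial generator of $\kk[M]^p$, while for $i \le r$ it contributes $x_i^p \in S(\g^e)$. Since $S(\g^e) = \kk[x_1,\dots,x_r]$ is free of rank $p^r$ over $\kk[x_1^p,\dots,x_r^p]$ with basis $\{x^\ba : 0 \le a_i < p\}$, the same holds for $\gr \hU(\g,e)$ over $\gr Z_p(\bfp)$. A standard filtration lift then makes $\{\Theta^\ba : 0 \le a_i < p\}$ a $Z_p(\bfp)$-basis of $\hU(\g,e)$, proving (iv) with $r = \dim\g^e = \dim\g^\chi$.

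Finally, for (iii), the factorisation of Step~A makes $Z_p(\bfp)$ free over $Z_p(\g,e)$ (with $\kk$-basis $Z_p(\bfa)$), and in particular faithfully flat. Combining this with the isomorphism $\hU(\g,e) \cong Z_p(\bfp) \otimes_{Z_p(\g,e)} U(\g,e)$ from (ii) and freeness of rank $p^{\dim\g^\chi}$ from (iv), faithfully flat descent yields that $U(\g,e)$ is $Z_p(\g,e)$-free of the same rank; equivalently, the basis $\{\Theta^\ba : 0 \le a_i < p\}$ from (iv) already lies in $U(\g,e)$ and serves as a $Z_p(\g,e)$-basis. I expect the main obstacle to be the factorisation step, which requires setting up a graded Nakayama-style induction with respect to the Kazhdan grading and ensuring that the projection to $Z_p(\g^e)$ interacts correctly with the filtration; once this is in place, the remainder of the proof is essentially bookkeeping with the PBW structure of $\hU(\g,e)$.
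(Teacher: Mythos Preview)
Your proposal is correct and follows essentially the same architecture as the paper: both arguments hinge on the factorisation $Z_p(\bfp) \cong Z_p(\bfa) \otimes_\kk Z_p(\g,e)$ together with an associated-graded freeness argument, and both derive (i) and (ii) from Corollary~\ref{C:UvsUhat} plus this factorisation. The differences are in the order and the tools invoked. The paper proves (iii) first and directly: it extracts explicit PBW generators $\Phi(x_i)$ of $Z_p(\g,e)$ from Theorem~\ref{T:PBWbasisthm}, observes that their leading terms give $\gr Z_p(\g,e) = (\gr U(\g,e))^p$, and lifts the resulting freeness through the filtration; (iv) then follows from (ii), (iii), and the factorisation. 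You instead prove (iv) first via Theorem~\ref{T:extendedPBWtheorem}, identifying $\gr Z_p(\bfp)$ with $\kk[M]^p \otimes S(\g^e)^p$ inside $\gr \hU(\g,e) = \kk[M]^p \otimes S(\g^e)$, and then descend to (iii) either by faithfully flat descent or by noting the basis $\{\Theta^\ba : \ba \in \Z_{[0,p)}^r\}$ already lies in $U(\g,e)$. Similarly, for the factorisation the paper reuses the argument of Corollary~\ref{C:Qfree} with the explicit $\Phi(x_i)$, whereas you rely on Lemma~\ref{L:specisos} and a Kazhdan-degree induction. Your route avoids invoking Theorem~\ref{T:PBWbasisthm} altogether, at the modest cost of working in the larger algebra $\hU(\g,e)$; the paper's route is a little more self-contained on the $U(\g,e)$ side. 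One small point: your injectivity argument for the factorisation (``would survive projection'') is incomplete as stated, but since both sides are polynomial algebras with the same Kazhdan Hilbert series, surjectivity alone (which your induction gives) suffices.
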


\begin{proof}
Part (i) and the isomorphism $\hU(\g,e)  \cong Z_p(\bfa) \otimes U(\g,e)$
in (ii) follow directly from Corollary~\ref{C:UvsUhat}.
Using Theorem~\ref{T:PBWbasisthm}, we can obtain
descriptions of PBW generators of $Z_p(\g,e)$ as
\begin{equation} \label{e:pcentPBW}
\Phi(x_i) =  \xi(x_i) + \sum_{|\ba|_e = n_i + 2} \lambda_{\ba,i} \xi(x^\ba) + I_p \in Z_p(\g,e),
\end{equation}
for $i=1,\dots,r$.
Then we can
prove that the multiplication map
\begin{equation} \label{e:multpcent}
Z_p(\bfa) \otimes Z_p(\g,e) \isoto Z_p(\bfp)
\end{equation}
is an isomorphism by the same reasoning as in Corollary~\ref{C:Qfree}.

Thus we obtain isomorphisms
\begin{align*}
U(\g,e) \otimes Z_p(\bfa) &\cong (U(\g,e) \otimes_{Z_p(\g,e)} Z_p(\g,e)) \otimes Z_p(\bfa) \\
&\cong U(\g,e) \otimes_{Z_p(\g,e)} (Z_p(\g,e) \otimes Z_p(\bfa)) \\
&\cong U(\g,e) \otimes_{Z_p(\g,e)} Z_p(\bfp),
\end{align*}
which confirms (ii).

To prove (iii), we consider $\gr Z_p(\g,e)$ and $\gr U(\g,e)$ as
subalgebras of $\gr U(\g,e) \cong S(\bfp)$.
From \eqref{e:pcentPBW}, we see that
$\gr Z_p(\g,e)$ identifies with $(\gr U(\g,e))^p$, so that $\gr U(\g,e)$ is a free
$\gr Z_p(\g,e)$-module of rank $p^{\dim \g^\chi}$.
Now a standard filtration argument shows
that $U(\g,e)$ is free of rank $p^{\dim \g^\chi}$ over $Z_p(\g,e)$.

Part (iv) follows from (ii), the isomorphism \eqref{e:multpcent}, and (iii).
\end{proof}

\subsection{\texorpdfstring{The reduced finite $W$-algebras}{The reduced finite W-algebras}}
We identify $Z_p(\g,e)$ with $\kk[\chi + \check\v]^{(1)}$ via the isomorphisms $(\kk[\chi + \hat\m]^{(1)})^M \isoto
Z_p(\g,e)$ and $(\kk[\chi + \hat\m]^{(1)})^M \isoto
\kk[\chi + \check\v]^{(1)}$ from Lemma~\ref{L:specisos}.
Thus the maximal ideals of $Z_p(\g,e)$ are naturally parameterized by $\chi + \check\v$.
For $\eta \in \chi + \check\v$ we write $K_\eta$ for the corresponding maximal ideal of $Z_p(\g,e)$.

\begin{Definition}
The {\em reduced finite $W$-algebra for the $p$-character $\eta \in \chi + \check\v$} is
$$
U_\eta(\g,e) := U(\g,e) / K_\eta U(\g,e).
$$
The {\em restricted finite $W$-algebra} is $U_\chi(\g,e)$.
\end{Definition}

We remark that $\dim U_\eta(\g,e) = p^{\dim \g^\chi}$, because $U(\g,e)$ is a free module
of rank $p^{\dim \g^\chi}$ over $Z_p(\g,e)$ by Theorem~\ref{T:extendedpcentre}.

For $\eta \in \chi + \hat\m$
we define $\hK_\eta$  to be the maximal ideal of $\hZ(\g,e) = Z_p(\g) / I_p$ corresponding to $\eta$.
In the next lemma, we observe that the reduced finite $W$-algebras identify with the $p$-central reductions of $\hU(\g,e)$.
In the statement of the lemma we use the surjective map
$\phi^* : \Spec(Z_p(\g)/I_p) \twoheadrightarrow \Spec(Z_p(\g,e))$ from \eqref{e:specinclusion}, which
we view as a map
$$
\phi^* : \chi + \hat \m \twoheadrightarrow \chi + \check\v
$$
via the identifications $Z_p(\g)/I_p = \kk[\chi + \hat \m]^{(1)}$
and $Z_p(\g,e) = \kk[\chi + \check\v]^{(1)}$.

\begin{Lemma}
Let $\hK_\eta, \hK_{\eta'} \subseteq \hZ_p(\g,e)$ be maximal ideals. The following are equivalent:
\begin{enumerate}
\item $\eta$ and $\eta'$ are $M$-conjugate;
\item $\hK_\eta \cap Z_p(\g,e) = \hK_{\eta'} \cap Z_p(\g,e)$
\end{enumerate}
In this case the algebras $U_{\phi^* \eta}(\g,e)$ and $U_{\phi^* \eta'} (\g,e)$ are actually equal.
\end{Lemma}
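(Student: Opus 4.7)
The plan is to reduce this equivalence to a clean statement about the geometry of the map $\phi^* : \chi + \hat\m \twoheadrightarrow \chi + \check\v$ and then appeal directly to Lemma~\ref{L:conjugationlem}. First, I would unwind the identifications: under $Z_p(\g)/I_p \cong \kk[\chi + \hat\m]^{(1)}$ and $Z_p(\g,e) \cong \kk[\chi + \check\v]^{(1)}$ (from Lemma~\ref{L:specisos}), the inclusion $Z_p(\g,e) \hookrightarrow Z_p(\g)/I_p$ corresponds to restriction of functions along $\phi^*$. Consequently, for $\eta \in \chi + \hat\m$, the contraction $\hK_\eta \cap Z_p(\g,e)$ is the maximal ideal of functions in $\kk[\chi + \check\v]^{(1)}$ vanishing at $\phi^*(\eta)$, i.e.\ $\hK_\eta \cap Z_p(\g,e) = K_{\phi^*(\eta)}$.

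With this identification in hand, condition (2) becomes the equality $K_{\phi^*(\eta)} = K_{\phi^*(\eta')}$, which holds if and only if $\phi^*(\eta) = \phi^*(\eta')$, since maximal ideals of $\kk[\chi+\check\v]^{(1)}$ are in bijection with points of $\chi+\check\v$. Now invoke Lemma~\ref{L:conjugationlem}: the fibres of $\phi^*$ are precisely the $M$-orbits on $\chi + \hat\m$ (viewed in Frobenius-twisted form, but this makes no set-theoretic difference since $\g^\ast$ and $(\g^\ast)^{(1)}$ are identified as $G$-sets). Hence $\phi^*(\eta) = \phi^*(\eta')$ is equivalent to $\eta$ and $\eta'$ lying in the same $M$-orbit, which is statement (1).

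For the final assertion, once (1) and (2) hold, $\phi^*(\eta)$ and $\phi^*(\eta')$ denote the same element of $\chi + \check\v$, so the corresponding maximal ideals $K_{\phi^*(\eta)}$ and $K_{\phi^*(\eta')}$ of $Z_p(\g,e)$ are literally equal as subsets. The reduced $W$-algebras $U_{\phi^*(\eta)}(\g,e) = U(\g,e)/K_{\phi^*(\eta)}U(\g,e)$ and $U_{\phi^*(\eta')}(\g,e) = U(\g,e)/K_{\phi^*(\eta')}U(\g,e)$ are therefore equal, not merely isomorphic.

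The only subtlety worth flagging is the bookkeeping across the Frobenius twist and the three identifications in Lemma~\ref{L:specisos}; apart from that, the content of the proof is entirely absorbed into Lemma~\ref{L:conjugationlem}, so there is no substantive obstacle.
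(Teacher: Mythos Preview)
Your proof is correct and takes essentially the same approach as the paper: the paper's proof is simply the one-liner ``This follows from Lemma~\ref{L:conjugationlem},'' and you have spelled out precisely how that lemma yields the equivalence and the final equality of reduced $W$-algebras.
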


\begin{proof}
This follows from Lemma~\ref{L:conjugationlem}.
\end{proof}

We move on to show that the definition of reduced $W$-algebras
here coincides with that given by Premet in \cite[Sections~2 and 3]{PrST};
we note that in \cite[Remark~2.1]{PrGR} an equivalent statement was observed
to be true by Premet for $p$ sufficiently large.  Before we get to this
in Proposition~\ref{P:reducesame}, we need to recall some notation.

For each $\eta \in \g^*$ we consider the maximal ideal $J_\eta$ of $Z_p(\g)$
as defined in \S\ref{ss:notation}.  We define the {\em reduced Gelfand--Graev module}
$$
Q^\eta := Q / J_\eta Q.
$$
Now it follows from Lemma~\ref{L:pcentreannQ} that $Q^\eta = 0$ if and only if $\eta \notin \chi + \hat \m$.
Further, for $\eta \in \chi + \hat\m$, the arguments in the proof of \cite[Lemma~2.2(i)]{PrCQ} can be used to show that
$$
Q^\eta \cong U_\eta(\g) \otimes_{U_\eta(\m)} \kk_\eta \cong U_\eta(\g)/I_\eta,
$$
where $I_\eta = U_\eta(\g) \m_\chi$.
There is a well defined adjoint action of $\m$ on $Q^\eta$, and
the map
\begin{align*}
\End_{U(\g)}(Q^\eta)^\op &\to (Q^\eta)^{\ad(\m)} \\
 u &\mapsto u (1 + I_\eta)
\end{align*}
is a well-defined isomorphism; this can be proved in same way as Lemma~\ref{L:invariantsareanalgebra},
using Frobenius reciprocity to construct the isomorphism.  Thus we see that $(Q^\eta)^{\ad(\m)}$ inherits an algebra
structure from $U_\eta(\g)$.

We saw in Theorem~\ref{T:PBWbasisthm}
that $U(\g,e)$ has a nice set of PBW generators $\Theta(x_1),...,\Theta(x_r)$ such
that the linear part of the highest Kazhdan degree term of $\Theta(x_i)$ is $x_i$ where $x_1,...,x_r$ is a homogeneous basis of $\g^\chi$.
We now invoke an entirely different proof from \cite[\S3.4]{PrST} which shows that $\End_{U(\g)}(Q^\eta)^\op$ has a very similar basis.
Namely there exist elements $\theta_1,...,\theta_r$ in $\End_{U(\g)}(Q^\eta)^\op$
such that
$$
\gr_{n_i+2} \theta_i = x_i + \sum_{|\ba|_e = n_i + 2} \lambda'_{\ba,i} x^\ba
$$
where $\lambda'_{\ba,i} \in \kk$, and $\lambda'_{\ba,i} = 0$ whenever $|\ba| \leq 1$.
Further, the restricted PBW monomials form a $\kk$-basis of $U_\eta(\g,e)$
\begin{equation} \label{e:reducedbasis}
\{\theta^\ba := \theta_1^{a_1}\cdots \theta_r^{a_r} \mid \ba \in \Z_{[0,p)}^r\}.
\end{equation}
We remind the reader that $\Z_{[0,p)} = \{a \in \Z \mid 0 \le a < p\}$.

By Lemma~\ref{L:invariantsareanalgebra} we know that $U(\g,e) \sub \End_{U(\g)}(Q)^\op$.
Furthermore, $J_\eta Q$ is a $U(\g,e)$-submodule of $Q$, so there is a natural map
$$
\pi_\eta : U(\g,e) \to \End_{U(\g)}(Q^\eta)^\op.
$$

\begin{Proposition} \label{P:reducesame}
The map $\pi_\eta$ is surjective with kernel $K_\eta U(\g,e)$,
so induces an isomorphism
$$
\pi_\eta : U_\eta(\g,e) \isoto \End_{U(\g)}(Q^\eta)^\op.
$$
\end{Proposition}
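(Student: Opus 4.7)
The plan is to first establish the inclusion $K_\eta U(\g,e) \subseteq \ker \pi_\eta$, giving the induced map $\bar\pi_\eta : U_\eta(\g,e) \to \End_{U(\g)}(Q^\eta)^{\op}$, and then to show this is an isomorphism via a dimension count combined with a Kazhdan-filtered comparison. For the kernel containment I would take $z \in K_\eta \subseteq Z_p(\g,e)$ and lift it to $\tilde z \in Z_p(\g)$. Since $\chi + \check\v \subseteq \chi + \hat\m$ we have $K_\eta \subseteq \hK_\eta$, forcing $\tilde z \in J_\eta + I_p$; write $\tilde z = u + v$ with $u \in J_\eta$ and $v \in I_p = I \cap Z_p(\g)$. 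Then $u$ annihilates $Q^\eta = Q/J_\eta Q$ by definition, while $v$ annihilates all of $Q$: indeed $v$ is central and lies in the left ideal $I$, so $v \cdot (q+I) = qv + I = 0$ for every $q \in U(\g)$. Thus $z$ acts trivially on $Q^\eta$, which gives $\pi_\eta(z) = 0$; since $K_\eta$ is central in $U(\g,e)$, it follows that $K_\eta U(\g,e) \subseteq \ker \pi_\eta$.

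Next, I would compare dimensions. By Theorem~\ref{T:extendedpcentre}(iii), $U(\g,e)$ is free of rank $p^{\dim \g^\chi}$ over $Z_p(\g,e)$, and the PBW monomials $\{\Theta^\ba \mid \ba \in \Z_{\geq 0}^r\}$ can be reduced modulo the central generators $\Phi(x_i)$ to the subset with $\ba \in \Z_{[0,p)}^r$, which descends to a $\kk$-basis of $U_\eta(\g,e)$. Hence $\dim U_\eta(\g,e) = p^r = p^{\dim \g^\chi}$. Premet's restricted PBW basis \eqref{e:reducedbasis} gives the same dimension for $\End_{U(\g)}(Q^\eta)^{\op}$. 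Both sides inherit Kazhdan filtrations from $U(\g)$, and $\pi_\eta$ is filtered because the projection $Q \twoheadrightarrow Q^\eta$ is. So it is enough to prove surjectivity of $\bar\pi_\eta$.

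For surjectivity I would compare leading Kazhdan terms. By Theorem~\ref{T:PBWbasisthm}(ii), the Kazhdan leading term of $\Theta(x_i)$ equals $x_i$ up to corrections of total degree at least two in the same Kazhdan degree; the construction of Premet's generator $\theta_i$ recalled just before the proposition yields the same leading-term description. Consequently $\bar\pi_\eta(\Theta(x_i))$ and $\theta_i$ agree modulo such corrections, and a triangular argument passing to associated graded pieces shows that $\{\bar\pi_\eta(\Theta^\ba) \mid \ba \in \Z_{[0,p)}^r\}$ is a basis of $\End_{U(\g)}(Q^\eta)^{\op}$. Combined with the dimension count this produces the required isomorphism.

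The main obstacle lies in making this triangular comparison rigorous, which requires a sufficiently precise description of $\gr \End_{U(\g)}(Q^\eta)^{\op}$. One natural route is to establish a PBW-type theorem for $\End_{U(\g)}(Q^\eta)^{\op}$ parallel to Theorem~\ref{T:extendedPBWtheorem}, applied to $Q^\eta$ rather than $Q$: this would realise $\gr \End_{U(\g)}(Q^\eta)^{\op}$ as the $\m$-invariants of $\gr Q^\eta \cong \kk[e+\m^\perp]/\langle x^p : x \in \bfp\rangle$, so that the bases $\{\theta^\ba\}$ and $\{\bar\pi_\eta(\Theta^\ba)\}$ can be matched up at the level of leading terms. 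A clean way to sidestep this would be to prove directly that $\{\bar\pi_\eta(\Theta^\ba) \mid \ba \in \Z_{[0,p)}^r\}$ is linearly independent by evaluating on the cyclic generator $1 + I + J_\eta Q$ of $Q^\eta$ and using the Kazhdan-filtered structure of $Q$ modulo $J_\eta U(\g) + I$.
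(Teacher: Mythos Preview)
Your approach is essentially the paper's: establish $K_\eta U(\g,e)\subseteq\ker\pi_\eta$, prove surjectivity by comparing the PBW bases $\{\Theta^\ba\}$ and $\{\theta^\ba\}$ up to lower terms, and finish by the dimension count $\dim U_\eta(\g,e)=p^{\dim\g^\chi}=\dim\End_{U(\g)}(Q^\eta)^\op$. The paper's proof is in fact a single terse paragraph that asserts exactly these three points.

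The ``obstacle'' you flag is smaller than you suggest, and your second proposed fix is the right one; no auxiliary PBW theorem for $\gr\End_{U(\g)}(Q^\eta)^\op$ is needed. Concretely: both $\pi_\eta(\Theta^\ba)$ and $\theta^\ba$ lie in $F_{|\ba|_e}(Q^\eta)^{\ad\m}$, and by Theorem~\ref{T:PBWbasisthm}(ii) together with the description of $\theta_i$ recorded just before the proposition, each has top Kazhdan-graded piece in $\gr_{|\ba|_e}Q^\eta$ of the form $x^\ba+(\text{terms of total degree}>|\ba|)$. Order $\Z_{[0,p)}^r$ by declaring $\bb\prec\ba$ when $|\bb|_e<|\ba|_e$, or $|\bb|_e=|\ba|_e$ and $|\bb|>|\ba|$. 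Writing $\theta^\ba-\pi_\eta(\Theta^\ba)=\sum_\bb c_\bb\theta^\bb$ in the basis $\{\theta^\bb\}$ and looking first at Kazhdan degree and then at lowest total degree forces every $\bb$ with $c_\bb\ne 0$ to satisfy $\bb\prec\ba$. Since $\Z_{[0,p)}^r$ is finite this order is well-founded, and induction gives $\theta^\ba\in\operatorname{im}\pi_\eta$ for all $\ba$, hence surjectivity. This is precisely what the paper means by ``up to lower degree terms''.
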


\begin{proof}
To see that $\pi_\eta$ surjects we observe that the basis elements from
\eqref{e:reducedbasis} are the images of the corresponding basis elements of $U(\g,e)$,
at least up to lower degree terms.
Next we note that
$K_\eta U(\g,e)$ certainly lies in the kernel of $\pi_\eta$.  To
finish the proof we just need to observe that $\dim U_\eta(\g,e) = p^{\dim \g^\chi} = \dim \End_{U(\g)}(Q^\eta)^\op$.
\end{proof}

Next we state a version of Corollary~\ref{C:Qfree} for $U_\eta(\g,e)$; this is
is proved in \cite[Lemma~2.3]{PrCQ}, and can also be deduced from Corollary~\ref{C:Qfree}.

\begin{Lemma}\label{L:globalbasis}
For $\eta \in \chi + \check\v$ the right $U_\eta(\g,e)$-module $Q^\eta$ is free of rank $p^{d_\chi}$ with basis
\begin{equation*}
\{x^\ba + I_\eta  \in Q^\eta : \ba \in \Z_{[0,p)}^m , a_1 = \cdots = a_r = 0\}.
\end{equation*}
\end{Lemma}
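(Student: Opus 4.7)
The strategy is to deduce the lemma from Corollary~\ref{C:Qfree} by base change along a surjection $\hU(\g,e) \twoheadrightarrow U_\eta(\g,e)$, after first upgrading the freeness statement from $U(\g,e)$ to the extended algebra $\hU(\g,e)$.

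As a first step, I would show that $Q$ is free as a right $\hU(\g,e)$-module with basis
$$
\mathcal B' := \{x^\ba + I : \ba \in \Z_{[0,p)}^m,\ a_1 = \cdots = a_r = 0\}.
$$
The argument mirrors the proof of Corollary~\ref{C:Qfree}: by the isomorphism \eqref{e:emperp} the Kazhdan-associated graded of $Q$ is $\kk[M]\otimes\kk[e+\v]$, while Theorem~\ref{T:extendedPBWtheorem} gives $\gr\hU(\g,e)\cong\kk[M]^p\otimes\kk[e+\v]$. Since $\kk[M]$ is free over its Frobenius subring $\kk[M]^p$ on the $p^{m-r}$ monomials of degree less than $p$ in the coordinate functions, $\gr Q$ is accordingly free over $\gr\hU(\g,e)$ on this many generators, and a standard filtered-to-graded lifting produces the asserted basis $\mathcal B'$ of $Q$ over $\hU(\g,e)$.

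Next, I would identify $Q^\eta$ as a base change. The central subalgebra $Z_p(\g)\subseteq U(\g)$ acts on $Q$ by $U(\g)$-linear endomorphisms, so its image $Z_p(\bfp)=Z_p(\g)/I_p$ lies in the centre of $\hU(\g,e)\cong\End_{U(\g)}(Q)^{\op}$. In particular, writing $\bar J_\eta$ for the image of $J_\eta$ in $Z_p(\bfp)$, left multiplication by $J_\eta$ on $Q$ coincides with right multiplication by $\bar J_\eta$, so $J_\eta Q = Q\cdot\bar J_\eta$ and hence
$$
Q^\eta \;\cong\; Q \otimes_{\hU(\g,e)}\bigl(\hU(\g,e)/\bar J_\eta\hU(\g,e)\bigr).
$$
Using the factorisation $Z_p(\bfp)\cong Z_p(\g,e)\otimes Z_p(\bfa)$ from \eqref{e:multpcent} and the hypothesis $\eta\in\chi+\check\v$ (which forces $\eta(x_j)=\chi(x_j)$ for $j>r$ because $\v\perp\bfa$), the maximal ideal $\bar J_\eta$ decomposes as $K_\eta\otimes Z_p(\bfa)+Z_p(\g,e)\otimes L_\chi$, where $L_\chi:=\langle\hTheta(x_j)-\chi(x_j)^p : j>r\rangle$. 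Combined with $\hU(\g,e)\cong U(\g,e)\otimes Z_p(\bfa)$ from Corollary~\ref{C:UvsUhat}, this yields $\hU(\g,e)/\bar J_\eta\hU(\g,e)\cong U_\eta(\g,e)$.

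Putting the steps together, base change transports the $\hU(\g,e)$-basis $\mathcal B'$ of $Q$ to a $U_\eta(\g,e)$-basis of $Q^\eta$ that is exactly the claimed set $\{x^\ba + I_\eta : \ba\in\Z_{[0,p)}^m,\ a_1=\cdots=a_r=0\}$. Its cardinality equals $p^{m-r}=p^{d_\chi}$, the last identity coming from $\dim\bfa = \dim\bfp - \dim\g^e = (\dim\g-\dim\m)-r = d_\chi$ via \eqref{e:dimm+dimn} and $\m=\n$ (since $\l$ is Lagrangian). The main obstacle is the filtered lifting in the first step: although the $p$-power map $y\mapsto y^{[p]}$ is non-homogeneous, a quick Kazhdan-degree comparison shows that $\hTheta(x_j)=x_j^p-x_j^{[p]}$ has strictly dominant leading component $x_j^p$ (because $p(n_j+2)>pn_j+2$), which makes the triangularity against the graded basis from Theorem~\ref{T:extendedPBWtheorem} transparent.
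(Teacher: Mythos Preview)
Your proposal is correct and follows the route the paper only hints at when it says the lemma ``can also be deduced from Corollary~\ref{C:Qfree}'' (the paper's primary justification is simply a citation to \cite[Lemma~2.3]{PrCQ}).  Your intermediate step---that $Q$ is free of rank $p^{d_\chi}$ over $\hU(\g,e)$ with the explicit basis $\mathcal B'$---is exactly Theorem~\ref{T:matrixalgebratheorem}(i) together with an identification of the basis; the paper reaches that statement later via Corollary~\ref{C:Qfree} and Theorem~\ref{T:extendedpcentre}(ii) rather than by your direct graded comparison of $\kk[M]$ over $\kk[M]^p$, but both arguments are valid.  The base-change identification $\hU(\g,e)/\bar J_\eta\hU(\g,e)\cong U_\eta(\g,e)$ is correct and uses precisely the ingredients you name (\eqref{e:multpcent}, Corollary~\ref{C:UvsUhat}, and the vanishing $\eta(x_j)=\chi(x_j)=0$ for $j>r$, the latter holding because $x_j\in\g(n_j)$ with $n_j\ge -1$ while $e\in\g(2)$).

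One small remark: in your first step the filtered-to-graded lifting only directly yields \emph{some} basis of the correct size and degrees; to pin down $\mathcal B'$ itself you should note that the images of the $x^\ba+I$ (for $\ba\in\Z_{[0,p)}^m$, $a_1=\cdots=a_r=0$) in $\gr Q\cong S(\bfp)$ are the restricted monomials in $S(\bfa)$, and then check these form a graded basis over $\gr\hU(\g,e)$.  Alternatively, it is slightly cleaner to argue straight from Corollary~\ref{C:Qfree}: writing each exponent $a_j$ ($j>r$) as $pq_j+r_j$ with $0\le r_j<p$ and using $x_j^p=(x_j^p-x_j^{[p]})+x_j^{[p]}$ with your observation that $x_j^{[p]}$ has strictly lower Kazhdan degree, one sees that the $U(\g,e)$-basis $\{x^\ba+I:\ba\in\Z_{\ge 0}^m,\,a_1=\cdots=a_r=0\}$ is triangularly related to $\{\hTheta^{\bb}\cdot(x^{\ba'}+I)\}$ with $\ba'\in\Z_{[0,p)}^m$, yielding the $\hU(\g,e)$-basis $\mathcal B'$ directly.
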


Our final lemma in this section says that $Q^\eta$ is faithful as a $U_\eta(\g)$-module.
This can be deduced from \cite[Lemma~2.2]{PrCQ} as in \cite[\S3.15]{To}.

\begin{Lemma} \label{L:Qetafaith}
$\Ann_{U_\eta(\g)}(Q^\eta) = \{0\}$.
\end{Lemma}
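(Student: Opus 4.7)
The plan is to exhibit $\Ann_{U_\eta(\g)}(Q^\eta)$ as the kernel of the natural algebra homomorphism
$$
\rho : U_\eta(\g) \longrightarrow \End_{U_\eta(\g,e)^\op}(Q^\eta)
$$
given by left multiplication, where the right $U_\eta(\g,e)$-action on $Q^\eta$ is the one obtained from Proposition~\ref{P:reducesame}. Since these two actions commute, $\rho$ is well-defined, and by construction $\ker \rho = \Ann_{U_\eta(\g)}(Q^\eta)$. It therefore suffices to prove that $\rho$ is an isomorphism.

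The first step is a dimension count. By Lemma~\ref{L:globalbasis}, $Q^\eta$ is a free right $U_\eta(\g,e)$-module of rank $p^{d_\chi}$, so that
$$
\End_{U_\eta(\g,e)^\op}(Q^\eta) \cong \Mat_{p^{d_\chi}}\!\bigl(U_\eta(\g,e)\bigr).
$$
Combined with Theorem~\ref{T:extendedpcentre}(iii), which gives $\dim U_\eta(\g,e) = p^{\dim \g^\chi}$, the target has dimension $p^{2 d_\chi + \dim \g^\chi}$. Using that $\dim \m = d_\chi$ (as $\l$ is Lagrangian in $\g(-1)$) together with \eqref{e:dimm+dimn}, one obtains $2 d_\chi + \dim \g^\chi = \dim \g$, so the source and target of $\rho$ have the same dimension $p^{\dim \g}$. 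Consequently, verifying either injectivity or surjectivity of $\rho$ is enough.

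For the final step I would follow the strategy of \cite[\S3.15]{To}, patterned on \cite[Lemma~2.2]{PrCQ}, and prove surjectivity of $\rho$ by producing preimages for a generating set of $\Mat_{p^{d_\chi}}(U_\eta(\g,e))$. Using the explicit right $U_\eta(\g,e)$-basis $\{x^\ba + I_\eta \mid \ba \in \Z_{[0,p)}^m,\ a_1 = \cdots = a_r = 0\}$ of $Q^\eta$ supplied by Lemma~\ref{L:globalbasis}, left multiplication by monomials in the basis $x_{r+1},\dots,x_m$ of $\bfa$ shifts basis vectors up in exponent, while the PBW decomposition $\g = \bfp \oplus \m$ lets one exploit the fact that modulo $I_\eta$ every element of $\m$ is identified with its $\chi$-scalar. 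Combining these raising and lowering operations with the commuting right $U_\eta(\g,e)$-action, one realises for each pair of basis indices $(\ba,\bb)$ and each $u \in U_\eta(\g,e)$ an element of $U_\eta(\g)$ whose image under $\rho$ is the matrix unit $E_{\ba,\bb} \otimes u$, from which surjectivity follows.

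The main obstacle is precisely this last bookkeeping step: the constructed elements must be verified to commute with the right $U_\eta(\g,e)$-action on the nose (and not merely up to lower filtered terms), which requires a careful inductive argument on Kazhdan degree using Theorem~\ref{T:PBWbasisthm}(ii) and the explicit form of $\pi_\eta$ in Proposition~\ref{P:reducesame}. Once surjectivity is established, the dimension count forces $\rho$ to be bijective, and hence $\Ann_{U_\eta(\g)}(Q^\eta) = \ker \rho = \{0\}$.
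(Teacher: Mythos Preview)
Your framework—the map $\rho$, the identification $\ker\rho = \Ann_{U_\eta(\g)}(Q^\eta)$, and the dimension count—is correct, and with the dimensions matched either injectivity or surjectivity of $\rho$ suffices. You have chosen surjectivity; the paper instead establishes injectivity directly, which is the easier direction. The paper's primary proof is simply the citation to \cite[Lemma~2.2]{PrCQ} and \cite[\S3.15]{To}, and both of these establish faithfulness (i.e.\ injectivity of $\rho$) rather than surjectivity, so invoking them for your surjectivity step points the wrong way. The paper also sketches a self-contained injectivity argument: choose bases $\{y_i\}$ of $\bfa$ and $\{z_i\}$ of $\m$ with $\chi([y_i,z_j])=\delta_{ij}$ and compatible gradings, set $z_{i,\chi}=z_i-\chi(z_i)$, order $\Z_{[0,p)}^{m-r}$ suitably, and compute via commutators that $z_\chi^{\bb}(y^{\bb}+I_\eta)=\prod_i b_i!+I_\eta$ while $z_\chi^{\bc}(y^{\bb}+I_\eta)=0$ whenever $\bb<\bc$. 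Then for any nonzero $u=\sum_{\ba,\bb,\bc}\alpha_{\ba,\bb,\bc}\,x^{\ba}y^{\bb}z_\chi^{\bc}\in U_\eta(\g)$, choosing $\bc$ maximal with some $\alpha_{\ba,\bb,\bc}\ne 0$ gives $u(y^{\bc}+I_\eta)\ne 0$.

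Your surjectivity route would need the very same commutator calculations—the ``lowering operators'' from $\m$ you allude to are exactly the $z_{i,\chi}$ above—but assembled into explicit preimages of every matrix unit $E_{\ba,\bb}\otimes u$. Note also that the obstacle you flag is not quite the real one: left multiplication by any element of $U_\eta(\g)$ automatically commutes with the right $U_\eta(\g,e)$-action, so right-linearity of $\rho(v)$ is free. The genuine difficulty lies in producing the diagonal scalars $1\otimes u$ for $u\in U_\eta(\g,e)$, since left multiplication by a lift of $u$ to $U_\eta(\g)$ differs from right multiplication by $u$ on $Q^\eta$ by commutators with the basis monomials $x^{\ba}$. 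So surjectivity is provable but genuinely more laborious; switching to the injectivity argument sidesteps all of this and your own dimension count then finishes the job.
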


We also note that
the above lemma can be proven in an elementary way using PBW bases and commutators and some ideas
from the proof of the theorem in \cite{Sk}; we just outline this here.

We need to fix bases $\{y_1,\dots,y_{m-r}\}$ of $\bfa$ and $\{z_1,\dots,z_{m-r}\}$
of $\m$ such that $\chi([y_i,z_j]) = \delta_{ij}$.
Further, we require $y_i \in \g(s_i-2), z_i \in \g(-s_i)$, where $s_i \in \Z_{\ge 0}$ and $s_1 \ge \dots \ge s_{m-r}$.
Then $\{x_1,\dots,x_r,y_1,\dots,y_{m-r},z_1,\dots,z_{m-r}\}$ is a basis of $\g$, and we have a corresponding PBW basis
$\{x^\ba y^\bb z^\bc \mid \ba \in \Z_{[0,p)}^r, \bb \in \Z_{[0,p)}^{m-r}, \bc \in \Z_{[0,p)}^{m-r} \}$
of $U_\eta(\g)$.  We fix an order $<$ on
$\Z_{[0,p)}^{m-r}$ subject to the condition: $\ba < \bb$ whenever either $\sum_{i=1}^{m-1} s_ia_i < \sum_{i=1}^{m-r} s_ib_i$, or
$\sum_{i=1}^{m-1} s_ia_i = \sum_{i=1}^{m-r} s_ib_i$ and $\sum_{i=1}^{m-1} a_i < \sum_{i=1}^{m-r} b_i$.
We also define $z_{i,\chi} = z_i - \chi(z_i)$ for each $i = 1,\dots,m-r$ and $z_\chi^\bc = z_{1,\chi}^{c_1} \cdots z_{m-r,\chi}^{c_{m-r}}$.

The key step is to show that $z_\chi^\bc(y^\bb + I_\eta) = 0$ if $\bb < \bc$, and that
$z_\chi^\bb(y^\bb+I_\eta) = \prod_{i=1}^{m-r} b_i! + I_\eta$.  This can be achieved with an explicit calculation with
commutators.  Now given any $u = \sum_{\ba,\bb,\bc} \alpha_{\ba,\bb,\bc} x^\ba y^\bb z^\bc \in U_\eta(\g)$,
we choose $\bc$ to be maximal such that $\alpha_{\ba,\bb,\bc} \ne 0$ for some $\ba, \bb$.  Then the above formulae
can be used to prove that $u(y^{\bc} + I_\eta) \ne 0$.

Another alternative proof can be obtained by using the ideas of the proof of Theorem~\ref{T:extendedPBWtheorem}
to first show that $Q^\eta$ is free as a $U_\eta(\m)$-module and then to deduce the result.

\section{Skryabin's equivalence}
In this final section we prove a modular version of the celebrated category equivalence due to Skryabin
in characteristic zero, see \cite{Sk}. We build on the approach of the second author \cite{To} where
this result was first obtained for $p$ sufficiently large using Premet's modular reduction of the
characteristic zero finite $W$-algebra.

\subsection{Central reductions of enveloping algebras}
Let $\h$ be a subspace of $\g$.  We define $\h_\chi = \{x - \chi(x) \mid x \in \h^\perp\}$ and $J_{\chi,\h}$ to be the ideal of $Z_p(\g)$
generated by $\xi(\h^{(1)}_\chi) = \{x^p - x^{[p]} - \chi(x)^p \mid x \in \h^\perp\}$.  Then we define the central reduction
$$
U_{\chi,\h}(\g) := U(\g) / U(\g)J_{\chi,\h}
$$
of $U(\g)$.
We are interested in $U_{\chi,\m^\perp}(\g)$ and $U_{\chi,\v}(\g)$. Their relationship is described in the next lemma.

\begin{Lemma}\label{L:surjkernel}
There is a natural surjection $U_{\chi,\m^\perp}(\g) \onto U_{\chi,\v}(\g)$ with kernel $\xi(\bfa^{(1)})U_{\chi,\m^\perp}(\g)$.
\end{Lemma}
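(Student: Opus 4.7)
The plan is to show that $J_{\chi,\m^\perp} \sub J_{\chi,\v}$ as ideals of $Z_p(\g)$, compare the two sets of generators directly, and then identify the extra generators of the larger ideal. The key geometric input will be the decomposition $\v^\perp = \m \oplus \bfa$.

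First I would establish that $\m \sub \v^\perp$ via a degree count. By definition $\m \sub \bigoplus_{j \le -1} \g(j)$, and $\v \sub \bigoplus_{j < 1} \g(j)$ by the good grading property recalled in \S\ref{ss:goodtransverse}; since $(\cdot\,,\cdot)$ pairs $\g(i)$ nontrivially only with $\g(-i)$, no graded component of $\m$ can pair nontrivially with a graded component of $\v$. Together with the fact that $\bfa$ is orthogonal to $\v$ by choice and satisfies $\bfa \cap \m = \{0\}$ (because $\bfa \sub \bfp$ and $\g = \bfp \oplus \m$), this gives $\m \oplus \bfa \sub \v^\perp$. The dimensions agree:  $\dim \v = \dim \g^e$ by \eqref{e:vdual}, so $\dim \v^\perp = \dim \g - \dim \g^e$, and on the other hand $\dim \bfa + \dim \m = (\dim \bfp - \dim \g^e) + \dim \m = \dim \g - \dim \g^e$. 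Hence $\v^\perp = \m \oplus \bfa$.

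Next I would check that $\chi$ vanishes on $\bfa$. Since $\bfa \sub \bfp = \l' \oplus \bigoplus_{j \ge 0} \g(j)$ has no component in $\g(-2)$, whereas $e \in \g(2)$ pairs nontrivially only with $\g(-2)$, we have $\chi(x) = (e,x) = 0$ for every $x \in \bfa$. Consequently the extra generators of $J_{\chi,\v}$ coming from $\bfa$ reduce to $\{x^p - x^{[p]} \mid x \in \bfa\} = \xi(\bfa^{(1)})$, and
$$
J_{\chi,\v} = J_{\chi,\m^\perp} + \xi(\bfa^{(1)})\, Z_p(\g).
$$

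Finally, applying $U(\g)\cdot$ and using that $\xi(\bfa^{(1)})$ is central gives
$$
U(\g)\, J_{\chi,\v} = U(\g)\, J_{\chi,\m^\perp} + \xi(\bfa^{(1)})\, U(\g),
$$
which produces the natural surjection $U_{\chi,\m^\perp}(\g) \onto U_{\chi,\v}(\g)$, and identifies its kernel as the image of $\xi(\bfa^{(1)})\, U(\g)$ in $U_{\chi,\m^\perp}(\g)$, namely $\xi(\bfa^{(1)})\, U_{\chi,\m^\perp}(\g)$. There is no real obstacle; the only substantive ingredients are the grading argument showing $\m \perp \v$ and the dimension count pinning down $\v^\perp$.
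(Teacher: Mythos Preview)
Your proof is correct and follows essentially the same route as the paper's: both arguments establish $\m \sub \v^\perp$ (the paper cites the earlier observation $\v \sub \m^\perp$ from \S\ref{ss:goodtransverse}, you redo the grading argument directly), identify the extra generators of $J_{\chi,\v}$ as coming from $\bfa$, and conclude. Your version is slightly more explicit in two respects---you use a dimension count to pin down $\v^\perp = \m \oplus \bfa$ whereas the paper phrases it as $\bfa = \bfp \cap \v^\perp$, and you spell out that $\chi|_\bfa = 0$ so that the $\bfa$-generators really are $\xi(\bfa^{(1)})$ rather than $\xi(\bfa_\chi^{(1)})$---but the underlying argument is the same.
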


\begin{proof}
We know that the restriction of the form $(\cdot\,,\cdot)$ to $\m$ is zero and
that $\v \subseteq \m$.  Thus it follows that $\m \sub \v^\perp = 0$. This shows that the surjection exists and that the kernel is
generated by $\{x^p - x^{[p]} - \chi(x)^p \mid x\in \bfp \cap \v^\perp\}$. The claim now follows since $\bfa = \{x \in \bfp \mid (x, \v) = 0\}$.
\end{proof}

We note that $U_{\chi,\m^\perp}(\g)$ is naturally a $Z_p(\bfp)$-module, because
$J_{\chi,\m^\perp} = I_p$ and the map $Z_p(\bfp) \to Z_p(\g)/I_p$ is an isomorphism; here
$I_p$ is as defined at the start of \S\ref{ss:pcentre}.
Also Lemma~\ref{L:pcentreannQ} states that
\begin{equation}\label{e:restatepaQ}
I_p = \Ann_{Z_p(\g)}(Q),
\end{equation}
so $Q$ and $\hU(\g,e)$ are $Z_p(\bfp)$-modules in a natural way, and $\Mat_{p^{d_\chi}} \hU(\g,e)$
is also a $Z_p(\bfp)$-module.
We write $Q^{\oplus p^{d_\chi}}$ for
the direct sum of $p^{d_\chi}$ copies of $Q$.

Now we can state and prove the following theorem, which forms
the main step in obtaining our version of Skryabin's equivalence in
Theorem~\ref{T:skryabin}

\begin{Theorem} \label{T:matrixalgebratheorem}
$ $
\begin{enumerate}
\item[(i)] $Q$ is a free right $\hU(\g,e)$-module of rank $p^{d_\chi}$.
\item[(ii)] There exists an isomorphism of left $U(\g)$-modules
$$
U_{\chi,\m^\perp}(\g) \isoto Q^{\oplus p^{d_\chi}};
$$
\item[(iii)] $Q$ is a projective generator for $U_{\chi,\m^\perp}(\g)\lmod$.
\item[(iv)] There exists a $Z_p(\bfp)$-equivariant isomorphism of algebras
$$
U_{\chi,\m^\perp}(\g) \isoto \Mat_{p^{d_\chi}} \hU(\g,e);
$$
\item[(v)] There exists an isomorphism of algebras
$$
U_{\chi,\v}(\g) \isoto \Mat_{p^{d_\chi}} U(\g,e).
$$
\end{enumerate}
\end{Theorem}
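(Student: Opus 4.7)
The plan is to establish (i) first, use it to prove (iv), and then deduce (ii), (iii), and (v) formally from (iv).

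Part (i) is a filtration argument paralleling Theorem~\ref{T:extendedPBWtheorem}. From \eqref{e:emperp} and Theorem~\ref{T:extendedPBWtheorem} we have $\gr Q \cong \kk[M] \otimes \kk[e+\v]$ and $\gr \hU(\g,e) \cong \kk[M]^p \otimes \kk[e+\v]$ (with $N = M$ in this section). Since $M$ is a connected unipotent algebraic group of dimension $d_\chi$, isomorphic to affine space as a variety, $\kk[M]$ is a free module over $\kk[M]^p$ of rank $p^{d_\chi}$, so $\gr Q$ is free of rank $p^{d_\chi}$ over $\gr \hU(\g,e)$. A standard filtration argument then lifts this to freeness of $Q$ over $\hU(\g,e)$.

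Part (iv) is the main step. By (i), $\End_{\hU(\g,e)}(Q) \cong \Mat_{p^{d_\chi}}(\hU(\g,e))$. The commuting left $U(\g)$-action on $Q$ factors through $U_{\chi,\m^\perp}(\g)$ (since $I_p$ annihilates $Q$ by \eqref{e:restatepaQ}), yielding a $Z_p(\bfp)$-algebra homomorphism
$$
\phi : U_{\chi,\m^\perp}(\g) \longrightarrow \Mat_{p^{d_\chi}}(\hU(\g,e)).
$$
Both sides are free $Z_p(\bfp)$-modules of rank $p^{\dim\g}$ (the left because $U(\g)$ is free over $Z_p(\g)$ of that rank, the right by Theorem~\ref{T:extendedpcentre}), so it suffices to verify that $\phi$ is an isomorphism after reducing modulo every maximal ideal of $Z_p(\bfp)$. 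Such an ideal corresponds via Lemma~\ref{L:specisos} to a point $\eta \in \chi + \hat\m$, and using Corollary~\ref{C:UvsUhat} together with Lemma~\ref{L:globalbasis} (which identifies $\hU(\g,e)$ reduced at this ideal with $U_{\phi^*\eta}(\g,e)$ and gives the freeness of $Q^\eta$), the reduction of $\phi$ becomes the natural action map
$$
\phi_\eta : U_\eta(\g) \longrightarrow \End_{U_{\phi^*\eta}(\g,e)}(Q^\eta) \cong \Mat_{p^{d_\chi}}(U_{\phi^*\eta}(\g,e)).
$$
Lemma~\ref{L:Qetafaith} (faithfulness of $Q^\eta$) gives injectivity of $\phi_\eta$, and both sides have dimension $p^{\dim\g}$, so $\phi_\eta$ is in fact an isomorphism. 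The main obstacle is arranging this fibrewise reduction cleanly; the crucial input is the faithfulness statement of Lemma~\ref{L:Qetafaith}.

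Parts (ii) and (iii) are then Morita-theoretic consequences of (iv): as a left module over itself, $\Mat_{p^{d_\chi}}(\hU(\g,e))$ decomposes as $p^{d_\chi}$ copies of the column module $\hU(\g,e)^{p^{d_\chi}}$, and this column module corresponds under $\phi$ (via the basis from (i)) to $Q$ as a left $U_{\chi,\m^\perp}(\g)$-module. This simultaneously yields $U_{\chi,\m^\perp}(\g) \cong Q^{\oplus p^{d_\chi}}$ as a left $U(\g)$-module, giving (ii), and exhibits $Q$ as a direct summand of, and a generator for, the regular module, giving (iii). Finally, for (v) one quotients both sides of (iv) by the ideal generated by $\xi(\bfa^{(1)})$: by Lemma~\ref{L:surjkernel} the left-hand side becomes $U_{\chi,\v}(\g)$, while $\hU(\g,e)/\xi(\bfa^{(1)})\hU(\g,e) \cong U(\g,e)$ by Corollary~\ref{C:UvsUhat}, producing $U_{\chi,\v}(\g) \cong \Mat_{p^{d_\chi}}(U(\g,e))$.
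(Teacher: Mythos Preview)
Your proof is correct and uses the same essential ingredients as the paper: the fibrewise reduction over $\Spec Z_p(\bfp)$, the faithfulness of $Q^\eta$ from Lemma~\ref{L:Qetafaith}, and the observation that a surjection between free $Z_p(\bfp)$-modules of equal rank is an isomorphism. The only real difference is the order of deduction: the paper constructs the $U(\g)$-module map $\varphi : U(\g) \to Q^{\oplus p^{d_\chi}}$, proves (ii) first by the fibrewise argument, and then reads off (iv) as $U_{\chi,\m^\perp}(\g) \cong \End_{U(\g)}(Q^{\oplus p^{d_\chi}})^\op$; you instead prove (iv) directly via the algebra map $\phi : U_{\chi,\m^\perp}(\g) \to \End_{\hU(\g,e)}(Q)$ and recover (ii) from the column decomposition of the matrix algebra. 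Your argument for (i), passing through $\gr Q$ free over $\gr \hU(\g,e)$ via $\kk[M]$ free over $\kk[M]^p$, is also slightly more direct than the paper's, which combines Corollary~\ref{C:Qfree} with Theorem~\ref{T:extendedpcentre}(ii). Neither reorganization changes anything of substance; the derivation of (v) is identical in both.
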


\begin{proof}
We start by noting that (i) can be deduced as a
consequence of Corollary~\ref{C:Qfree} and Theorem~\ref{T:extendedpcentre}(ii).

The proof of (ii), (iii) and (iv) follow similar steps as in \cite[Section~5]{To}, however
we include the argument for the readers convenience, and so that the differences can be seen.
Consider the set
$$
\{x^\ba +I \in Q : \ba \in \Z_{[0,p)}^m, a_1 = \cdots = a_r = 0\}
$$
from Lemma~\ref{L:globalbasis}. We label this set $v_1,...,v_{p^{d_\chi}} \in Q$ in some arbitrary manner.
For $i=1,...,p^{d_\chi}$ we let $Q[i]$ be an isomorphic copy of $Q$ and
we write $v_i[i] \in Q[i]$ for the element of $Q[i]$ corresponding to $v_i$.
Next we define a $\g$-module homomorphism $\varphi : U(\g) \to \bigoplus_{i=1}^{p^{d_\chi}} Q[i] = Q^{\oplus p^{d_\chi}}$ by
\begin{equation} \label{e:varphiso}
\varphi(u) = \sum_{i=1}^{p^{d_\chi}} u v_i[i]
\end{equation}
For $\eta \in \g^*$ write $\varphi_\eta : U_\eta(\g) \to (Q^\eta)^{\oplus p^{d_\chi}}$
for the induced map on the quotients. It follows from Lemma~\ref{L:Qetafaith} that $\varphi_\eta$ is injective
for $\eta \in \chi + \hat \m$.  From \eqref{e:dimm+dimn} we have $\dim \m = d_\chi$ and so $\dim (Q^\eta)^{\oplus p^{d_\chi}} = p^{\dim \g} = \dim U_\eta(\g)$
for all such $\eta$. It follows that $\varphi_\eta$ is also surjective.
By \eqref{e:restatepaQ} we have $Q^\eta = 0$ for $\eta \notin \chi + \hat \m$ so actually
$\varphi_\eta$ is surjective for every $\eta \in \g^*$, thus
by \cite[Corollary~4.4]{To} we have
$\coker(\varphi)/J_\eta \coker(\varphi) = \coker(\varphi_\eta) = 0$.
Hence, $\coker(\varphi) = 0$ by \cite[Lemma~4.4]{To}, so that $\varphi$ is surjective.

Using Lemma~\ref{L:pcentreannQ}  we have a
surjection $U_{\chi,\m^\perp}(\g) \onto Q^{\oplus p^{d_\chi}}$.
Also $U_{\chi,\m^\perp}(\g)$ is a free $Z_p(\bfp)$-module of rank $p^{\dim(\g)}$, as follows directly from the fact
that $U(\g)$ is a free $Z_p(\g)$-module of rank $p^{\dim \g}$.  We have that
$Q$ a free $Z_p(\bfp)$-module of rank $p^{\dim \bfp}$, so that $Q^{\oplus p^{d_\chi}}$ is free of rank
$p^{\dim \g}$.  Thus $U_{\chi,\m^\perp}(\g)$ and $Q^{\oplus p^{d_\chi}}$ are isomorphic as finitely generated
$Z_p(\bfp)$-modules, so that $U_{\chi,\m^\perp}(\g) \onto Q^{\oplus p^{d_\chi}}$ must be an isomorphism, see
for example \cite[Theorem~2.4]{Ma}. Thus we have proved (ii).

Now (iii) follows from the characterization of projective generators, see for example \cite[\S18B]{La}.

From (ii), standard arguments prove that
$$
U_{\chi,\m^\perp}(\g) \cong \End_{U(\g)}(Q^{\oplus p^{d_\chi}})^\op \cong (\Mat_{p^{d_\chi}} \End_{U(\g)}(Q))^\op \cong \Mat_{p^{d_\chi}} \hU(\g,e),
$$
which gives (iv).

To prove (v), we recall that $\xi(\bfa^{(1)}) := \{x^p - x^{[p]} \mid x\in \bfp, (x, \v) = 0\} \sub Z_p(\bfp)$, and then
using Lemma~\ref{L:surjkernel}, part (iv) of the current theorem, and Corollary~\ref{C:UvsUhat}, we obtain
\begin{align*}
U_{\chi,\v}(\g)  &\cong U_{\chi,\m^\perp}(\g)  / \xi(\bfa^{(1)}) U_{\chi,\m^\perp}(\g)  \\
& \cong  \Mat_{p^{d_\chi}} \hU(\g,e) / \xi(\bfa^{(1)}) \Mat_{p^{d_\chi}} \hU(\g,e)\\
&\cong \Mat_{p^{d_\chi}} (\hU(\g,e) / \xi(\bfa^{(1)}) \hU(\g,e)) \\ &\cong  \Mat_{p^{d_\chi}} U(\g,e). & & \qedhere
\end{align*}
\end{proof}

\subsection{\texorpdfstring{Skryabin's equivalence for modular finite $W$-algebras}
{Skryabin's equivalence for modular finite W-algebras}}
We move on to state and prove our modular analogue of Skryabin's equivalence after giving
the notation required.
From the proof of Theorem~\ref{T:matrixalgebratheorem}
it follows that we have a commutative diagram of algebra homomorphisms
 \begin{center}
\begin{tikzpicture}[node distance=2cm, auto]
\pgfmathsetmacro{\shift}{0.3ex}
\node (G) {$U(\g,e)$};
\node (H) [right of=G] {};
\node (I) [below of = G] {$\hU(\g,e)$};
\node (A) [right of=H] {$\Mat_{p^{d_\chi}} U(\g,e)$};
\node (E) [right of=A] { };
\node (B)[right of=E] {$U_{\chi,\v}(\g)$};
\node (C) [below of=A] {$\Mat_{p^{d_\chi}} \hU(\g,e)$};
\node (F) [right of=C] { };
\node (D) [right of=F] {$U_{\chi,\m^\perp}(\g)$};

\draw[transform canvas={yshift=0.5ex},right hook->] (G) --(A) node[above,midway] {$ $};

\draw[transform canvas={yshift=0.5ex},right hook->] (I) --(C) node[above,midway] {$ $};

\draw[transform canvas={yshift=0.5ex},->] (A) --(B) node[above,midway] {$\sim$};

\draw[transform canvas={yshift=0.5ex},->] (C) --(D) node[above,midway] {$\sim$};

\draw[transform canvas={xshift=0.5ex},->>] (I) --(G) node[right,midway] {$ $};

\draw[transform canvas={xshift=0.5ex},->>] (C) --(A) node[right,midway] {$ $};

\draw[transform canvas={xshift=0.5ex},->>] (D) --(B) node[right,midway] {$ $};
\end{tikzpicture}
\end{center}
with the $W$-algebras embedded diagonally in the matrix algebras.
The vertical surjections on the right hand side of this diagram induce functors
\begin{center}
\begin{tikzpicture}[node distance=3.5cm, auto]
\pgfmathsetmacro{\shift}{0.3ex}
\node (A) {$U_{\chi,\v}(\g)\lmod$};
\node (B) [right of=A] {$U_{\chi,\m^\perp}(\g)\lmod$,};
\draw[transform canvas={yshift=0.5ex},->] (A) --(B) node[above,midway] {$\scriptstyle j$};
\draw[transform canvas={yshift=-0.5ex},->](B) -- (A) node[below,midway] {$\scriptstyle \rho$};
\end{tikzpicture}
\end{center}
where $j$ is the inclusion functor given by the pullback through the surjection
$U_{\chi,\m^\perp}(\g) \onto U_{\chi,\v}(\g)$, and $\rho : V \mapsto V/ \xi(\bfa^{(1)})V$.
Moreover, $(\rho,j)$ is an adjoint pair.
Similarly the left hand surjection induces an adjoint pair $(\pi,i)$  of functors
\begin{center}
\begin{tikzpicture}[node distance=3.5cm, auto]
\pgfmathsetmacro{\shift}{0.3ex}
\node (A) {$U(\g,e)\lmod$};
\node (B) [right of=A] {$\hU(\g,e)\lmod$.};
\draw[transform canvas={yshift=0.5ex},->] (A) --(B) node[above,midway] {$\scriptstyle i$};
\draw[transform canvas={yshift=-0.5ex},->](B) -- (A) node[below,midway] {$\scriptstyle \pi$};
\end{tikzpicture}
\end{center}
We observe that if $V$ is a $U(\g)$-module then $\hU(\g,e)$ acts
naturally on
$$
V^{\m_{\chi}} := \{v \in V \mid (x - \chi(x))v = 0 \text{ for } x \in \m\}.
$$
Also given a $\hU(\g,e)$-module $W$, we have that
$$
Q \otimes_{\hU(\g,e)} W
$$
is a $U(\g)$-module.

We have now set up all the notation required to state our final theorem.

\begin{Theorem} \label{T:skryabin}
The restriction of $(-)^{\m_{\chi}}$ to $U_{\chi,\v}(\g)\lmod$ has image in
$U(\g,e)\lmod$ and the restriction of $Q \otimes_{\hU(\g,e)} (-)$ to $U(\g,e)\lmod$ has image in
$U_{\chi,\v}(\g)\lmod$.
Therefore, we obtain the following diagram of functors
\begin{center}
\begin{tikzpicture}[node distance=3.5cm, auto]
\pgfmathsetmacro{\shift}{0.3ex}

\node (A) {$U(\g,e)\lmod$};
\node (E) [right of=A] { };
\node (B)[right of=E] {$U_{\chi,\v}(\g)\lmod$};
\node (C) [below of=A] {$\hU(\g,e)\lmod$};
\node (F) [right of=C] { };
\node (D) [right of=F] {$U_{\chi,\m^\perp}(\g)\lmod$.};

\draw[transform canvas={yshift=0.5ex},->] (A) --(B) node[above,midway] {$Q \otimes_{\hU(\g,e)} (-)$};
\draw[transform canvas={yshift=-0.5ex},->](B) -- (A) node[below,midway] {$(-)^{\m_{\chi}}$};

\draw[transform canvas={yshift=0.5ex},->] (C) --(D) node[above,midway] {$Q \otimes_{\hU(\g,e)} (-)$};
\draw[transform canvas={yshift=-0.5ex},->](D) -- (C) node[below,midway] {$(-)^{\m_{\chi}}$};

\draw[transform canvas={xshift=0.5ex},->] (A) --(C) node[right,midway] {$i$};
\draw[transform canvas={xshift=-0.5ex},->](C) -- (A) node[left,midway] {$\pi$};

\draw[transform canvas={xshift=0.5ex},->] (B) --(D) node[right,midway] {$j$};
\draw[transform canvas={xshift=-0.5ex},->](D) -- (B) node[left,midway] {$\rho$};
\end{tikzpicture}
\end{center}
Moreover, the horizontal arrows at both the top and bottom are inverse equivalences of categories.
\end{Theorem}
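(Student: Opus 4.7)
The plan is to first establish the bottom row as an instance of classical Morita theory, and then deduce the top row equivalence by showing that the two functors restrict compatibly to the relevant full subcategories determined by the vanishing of $\xi(\bfa^{(1)})$.

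For the bottom row, I would assemble three ingredients already on hand: $Q$ is a projective generator for $U_{\chi,\m^\perp}(\g)\lmod$ by Theorem~\ref{T:matrixalgebratheorem}(iii); the endomorphism ring $\End_{U(\g)}(Q)^\op$ is identified with $\hU(\g,e)$ (as noted immediately after Section~\ref{S:walgebra} for $\l$ Lagrangian); and by Frobenius reciprocity one has a natural isomorphism $\Hom_{U(\g)}(Q,V)\cong \Hom_{U(\m)}(\kk_\chi,V)=V^{\m_\chi}$ which is $\hU(\g,e)$-equivariant. Standard Morita theory then yields that $Q\otimes_{\hU(\g,e)}(-)$ and $(-)^{\m_\chi}$ are mutually inverse equivalences between $\hU(\g,e)\lmod$ and $U_{\chi,\m^\perp}(\g)\lmod$, and the unit and counit of this adjunction are isomorphisms.

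For the top row, I would exploit the identification of $U_{\chi,\v}(\g)\lmod$ with the full subcategory of $U_{\chi,\m^\perp}(\g)\lmod$ consisting of modules annihilated by $\xi(\bfa^{(1)})$ (which follows from Lemma~\ref{L:surjkernel}), and similarly $U(\g,e)\lmod$ with the subcategory of $\hU(\g,e)\lmod$ annihilated by $\xi(\bfa^{(1)})$ (which follows from Corollary~\ref{C:UvsUhat}). The restriction claims then reduce to two short verifications. First, if $V$ is annihilated by $\xi(\bfa^{(1)})$ as a $U(\g)$-module, then $V^{\m_\chi}$ is automatically annihilated by $\xi(\bfa^{(1)})$ as an $\hU(\g,e)$-module because this $\hU(\g,e)$-action is obtained by restricting the $U(\g)$-action along $Z_p(\g)\sub U(\g)$. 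Second, if $W$ is annihilated by $\xi(\bfa^{(1)})$ as an $\hU(\g,e)$-module, then for $u\otimes w\in Q\otimes_{\hU(\g,e)}W$ and $z\in\xi(\bfa^{(1)})$ we have $z\cdot(u\otimes w)=zu\otimes w=uz\otimes w=u\otimes zw=0$, because $\xi(\bfa^{(1)})$ is central in $U(\g)$. Together with the commutativity of the diagram of functors (which is immediate from the constructions), these imply that the restrictions of the bottom horizontal arrows to the top categories are again mutually inverse equivalences.

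The only real content beyond the preceding sections is keeping the three different algebra structures carrying $\xi(\bfa^{(1)})$-actions aligned, so the main obstacle is more bookkeeping than substance: one has to check that the element $\xi(x)\in Z_p(\g)$ for $x\in\bfa$ acts consistently on $V$ (through $U(\g)$), on $V^{\m_\chi}$ (through $\hU(\g,e)$), and on $Q\otimes_{\hU(\g,e)}W$ (through either factor), and that these actions descend to the central reductions in the claimed ways. Once this compatibility is verified in full, the top equivalence drops out of the bottom equivalence at no extra cost.
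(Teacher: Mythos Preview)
Your proposal is correct and follows essentially the same route as the paper's proof: Morita theory via $Q$ as a projective generator (Theorem~\ref{T:matrixalgebratheorem}(iii)) together with Frobenius reciprocity for the bottom row, then restriction to the full subcategories annihilated by $\xi(\bfa^{(1)})$ (using Lemma~\ref{L:surjkernel} and Corollary~\ref{C:UvsUhat}) for the top row, with the centrality of $\xi(\bfa^{(1)})$ doing the work in both verifications. The only difference is cosmetic: you spell out the subcategory identifications and the tensor computation $z\cdot(u\otimes w)=u\otimes zw$ a bit more explicitly than the paper does.
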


\begin{proof}
First we deal with the claims regarding the bottom row of the diagram, which essentially follow from Morita theory,
see for example \cite[\S18]{La}.
By Theorem~\ref{T:matrixalgebratheorem}(iii) we know that $Q$ is a projective generator for $U_{\chi,\m^\perp}(\g)$,
and its endomorphism algebra is $\hU(\g,e)$.
Thus using \cite[Theorem~18.24]{La}, we obtain that $Q \otimes_{\hU(\g,e)} (-) : \hU(\g,e)\lmod \to U_{\chi,\m^\perp}(\g)\lmod$
is an equivalence of categories and then also using \cite[Remark~18.25]{La} an inverse equivalence is given by
$\Hom_{U_{\chi,\m^\perp}(\g)}(Q,-) :  U_{\chi,\m^\perp}(\g)\lmod \to \hU(\g,e)\lmod$.  Let $V$ be a $U_{\chi,\m^\perp}(\g)$-module.
Then we have $\Hom_{U_{\chi,\m^\perp}(\g)}(Q,V) = \Hom_{U(\g)}(Q,V)$
and observe that
$$
\Hom_{U(\g)}(Q,V) \cong \Hom_{U(\m)}(\kk_\chi,V) \cong V^{\m_\chi},
$$
where we use Frobenius reciprocity for the first isomorphism.  This
shows that the horizontal arrows at the bottom of the diagram are inverse equivalences.

Now let $V \in U_{\chi,\v}(\g))\lmod$ and  $W \in U(\g,e)\lmod$.
Viewing $V$ as a module for $U_{\chi,\v}(\g)$ via $j$, we note that $\xi(\bfa)(W^{\m_\chi}) = 0$, so that $W^{\m_\chi} \in U(\g,e)\lmod$.
Similarly, viewing $W$ as a module for $\hU(\g,e)$ via $i$ we observe that $\xi(\bfa)(Q \otimes_{\hU(\g,e)} W) = Q \otimes_{\hU(\g,e)} \xi(\bfa)W = 0$.
Therefore, we have that $Q \otimes_{\hU(\g,e)} W \in U_{\chi,\v}(\g)\lmod$.
It now follows that the horizontal arrows at the top of the diagram are inverse equivalences.
\end{proof}

\begin{Remark} \label{R:genequiv}
We end by noting that the proof Theorem~\ref{T:skryabin} can be adapted to consider other central quotients of
$\hU(\g,e)$ and $U_{\chi,\m^\perp}(\g)$.  More specifically, given any ideal $J$ of $Z_p(\g)$ containing $J_{\chi,\m^\perp}$,
we can consider $U_{\chi,\m^\perp}(\g)/JU_{\chi,\m^\perp}(\g)$ and $\hU(\g,e)/\hJ\hU(\g,e)$, where $\hJ$ denotes the
image of $J$ in $\hU(\g,e)$.  Then the proof of Theorem~\ref{T:skryabin}
can be used to give an equivalence of categories between $U_{\chi,\m^\perp}(\g)/JU_{\chi,\h}(\g)\lmod$ and $\hU(\g,e)/\hJ\hU(\g,e)\lmod$
via the functors  $(-)^{\m_{\chi}}$ and $Q \otimes_{\hU(\g,e)} (-)$.

In the special case $J = J_\eta$ with $\eta \in \chi + \hat \m$, we obtain an
equivalence between the category of modules for
$U_{\chi,\m^\perp}(\g)/J_\eta U_{\chi,\h}(\g) \cong U_\eta(\g)$ and the category of
modules for $\hU(\g,e)/\hJ_\eta\hU(\g,e) \cong U_\eta(\g,e)$.
This recovers Premet's equivalence from \cite[Theorem~2.4]{PrST} (see also \cite[Lemma~2.1]{PrCQ}),
and we note that by using the (elementary) proof of Lemma~\ref{L:Qetafaith}
outlined after its statement, we would obtain an alternative proof for this equivalence.
\end{Remark}


\begin{thebibliography}{BGK}

\bibitem[BM]{BM} R.~Bezrukavnikov and I.~Mirkovic, {\em Representations of semisimple Lie algebras in
prime characteristic and noncommutative Springer resolution}, Ann.\ Math.\ {\bf 178} (2013), 835--919.

\bibitem[BMR]{BMR} R.~Bezrukavnikov, I.~Mirkovic and D.~Rumynin,
{\em Localization of modules for a semisimple Lie algebra in prime characteristic},
Ann.\ Math.\ {\bf 167} (2008), no.\ 3, 945--991.

\bibitem[Br]{BrM}
J.~Brundan, {\em M{\oe}glin's theorem and Goldie rank polynomials in Cartan type A},
Compos.\ Math. {\bf 147} (2011), 1741--1771.

\bibitem[BruG]{BruG}
J.~Brundan and S.~M.~Goodwin, {\em Good grading polytopes}, Proc.\
London Math.\ Soc.\ {\bf 94} (2007), 155--180.

\bibitem[BGK]{BGK} J.~Brundan, S.~M.~Goodwin and A.~Kleshchev,
{\em Highest weight theory for finite $W$-algebras}, Internat.\
Math.\ Res.\ Notices, {\bf 15} (2008), Art.\ ID rnn051.

\bibitem[BK]{BKrep}
J.~Brundan and A.~Kleshchev,
{\em Representations of shifted Yangians and finite $W$-algebras},
Mem.\ Amer.\ Math.\ Soc.\
{\bf 196} (2008).

\bibitem[Ca1]{CaS}
R.~W.~Carter, {\em Simple Groups of Lie Type}, A Wiley-Interscience
publication, London, 1972.

\bibitem[Ca2]{CaF}
\bysame,
{\em Finite Groups of Lie Type}, Wiley, N.Y., 1985.

\bibitem[EK]{EK}
A.~G.~Elashvili and V.~G.~Kac, {\em Classification of Good Gradings of Simple Lie Algebras},
Lie groups and invariant theory\, (E.~B.~Vinberg
ed.), pp.\ 85--104, Amer.\ Math.\ Soc.\ Transl.\ {\bf 213}, AMS,
2005.

\bibitem[FP]{FP}
E.~Friedlander and B.~Parshall,
{\em Modular representation theory of Lie algebras}, Amer.\ J.\ Math.\ {\bf 110} (1988), 1055--1093.


\bibitem[GG]{GG}
W.~L.~Gan and V.~Ginzburg,
{\em Quantization of Slodowy slices},
Internat. Math. Res. Notices {\bf 5} (2002), 243--255.


\bibitem[Go]{Go}
S.~M.~Goodwin,
{\em Translation for finite $W$-algebras},
Represent.\ Theory {\bf 15} (2011), 307--346.

\bibitem[Hu]{Hu} J.~E.~Humphreys,
{\em Modular representations of simple Lie algebras}, Bull.\ Amer.\ Math.\ Soc.\ {\bf 35} (1998), 105--122.

\bibitem[Ja1]{JaLA}
J.~C.~Jantzen, {\em  Representations of Lie algebras in prime characteristic}, in Representation
Theories and Algebraic Geometry, Proceedings (A. Broer, Ed.), pp.\ 185–235. Montreal,
NATO ASI Series, Vol. C 514, Kluwer, Dordrecht, 1998.

\bibitem[Ja2]{JaAG}
J.~C.~Jantzen,  {\em Representations of Algebraic Groups}, second ed., Mathematical Surveys and Monographs {\bf 107},
American Mathematical Society, Providence, RI (2003).

\bibitem[Ja3]{JaNO}
J.~C.~Jantzen,
{\em Nilpotent orbits in representation theory},
Progress in Math., vol.\ 228, Birkh\"auser, 2004.

\bibitem[KW]{KW}
V.~Kac and B.~Weisfeiler, {\em The irreducible representations of Lie $p$-algebras},
(Russian) Funkcional.\ Anal.\ i Pril\u{o}zen {\bf 5} (1971),  28--36.

\bibitem[La]{La}
T.~Y.~Lam, {\em Lectures on modules and rings}, Graduate Texts in Mathematics {\bf 189}, Springer (1999)

\bibitem[LT]{LT} R.~Lawther and D.~M.~Testerman,
{\em Centres of centralizers of unipotent elements in simple algebraic groups},
Mem.\ Amer.\ Math.\ Soc.\ {\bf 210} (2011).

\bibitem[Lo]{Losev}
I.~Losev,
{\em Finite $W$-algebras},
Proceedings of the International Congress of Mathematicicans,
vol. III, pp. 1281--1307, Hindustan Book Agency, New Delhi, 2010.

\bibitem[Lu]{Lu}
G.~Lusztig, {\em Bases in equivariant $K$-theory II}, Represent.\ Theory {\bf 3} (1999), 281--353.

\bibitem[Ma]{Ma} H.~Matsumara, {\em Commutative ring theory},
Cambridge University Press, Cambridge, 1986.

\bibitem[M{\oe}]{Moe}
C.~M{\oe}glin, {\em Id\'eaux compl\`etement premiers de l'alg\`ebre enveloppante de $\mathfrak{gl}_n(\C)$}, J.\
Algebra {\bf 106} (1987), 287--366.

\bibitem[Pr1]{PrKW}
A.~Premet, {\em Irreducible representations of Lie algebras of reductive
groups and the Kac--Weisfeiler conjecture}, Invent.\ Math.\ {\bf 121} (1995),
79--117.

\bibitem[Pr2]{PrST}
\bysame,
{\em Special transverse slices and their enveloping algebras},
Adv.\ Math.\
{\bf 170} (2002), 1--55.

\bibitem[Pr3]{PrJI}
\bysame,
{\em Enveloping algebras of Slodowy slices and the Joseph ideal},
J.\ Eur.\ Math.\ Soc.\ {\bf 9} (2007), 487--543.

\bibitem[Pr4]{PrPI}
\bysame, {\em Primitive ideals, non-restricted representations and finite
$W$-algebras}, Mosc.\ Math.\ J.\ {\bf 7} (2007), 743--762.

\bibitem[Pr5]{PrCQ}
\bysame,
{\em Commutative quotients of finite W-algebras}, Adv.\ Math.\ {\bf 225} (2010), 269--306.

\bibitem[Pr6]{PrGR}
\bysame
{\em Enveloping algebras of Slodowy slices and Goldie rank},
Transform.\ Groups {\bf 16} (2011), 857--888.

\bibitem[Pr7]{PrMF}
\bysame,
{\em Multiplicity-free primitive ideals associated with rigid nilpotent orbits}, Transform.\ Groups {\bf 19} (2015), 569--641.

\bibitem[PT]{PT}
A.~Premet and L.~Topley,
{\em Derived subalgebras of centralisers and finite W-algebras},
Compos.\ Math.\ {\bf 150} (2014), 1485--1548.

\bibitem[Sk]{Sk} S.~Skryabin, {\em A category equivalence}, appendix to \cite{PrST}.

\bibitem[Sl]{Sl}
P.~Slodowy, {\em Simple Singularities and Simple Algebraic Groups}, Lecture Notes in
Mathematics {\bf 815}, Springer, 1980.

\bibitem[Sp]{Sp}
N.~Spaltenstein, {\em Existence of good transversal slices to nilpotent orbits in good
characteristics}, J.\ Fac.\ Sci.\ Univ.\ Tokyo (IA) {\bf 31} (1984), 283--286.

\bibitem[St]{StL}
R.~Steinberg,
{\em Lectures on Chevalley groups}, Yale University Math.\ Dept.\ (1968).

\bibitem[Ta]{Ta} J.~Taylor, {\em Generalized Gelfand--Graev representations in small characteristics},
Nagoya Math.\ J.\ {\bf 224} (2016), 93--167.

\bibitem[To]{To}
L.~Topley,
{\em A Morita theorem for modular finite $W$-algebras},
Math.\ Z.\ {\bf 285} (2017), 685--705.

\end{thebibliography}
\end{document}